\newtheorem{theorem}{Theorem}[section]
\newtheorem{lemma}[theorem]{Lemma}
\newtheorem{proposition}[theorem]{Proposition}
\newtheorem{conj}{Conjecture}
\theoremstyle{definition}
\newtheorem{definition}[theorem]{Definition}
\newtheorem{example}[theorem]{Example}
\newtheorem{question}[theorem]{Question}
\theoremstyle{remark}
\newtheorem{remark}[theorem]{Remark}
\newcommand{\aug}{\fboxsep=-\fboxrule\!\!\!\fbox{\strut}\!\!\!}
\let\phi\relax\DeclareMathOperator{\phi}{\varphi}
\DeclareMathOperator{\osec}{\overset{\circ}{\sigma}}
\DeclareMathOperator{\NN}{\mathbb{N}}
\DeclareMathOperator{\CC}{\mathbb{C}}
\DeclareMathOperator{\PP}{\mathbb{P}}
\DeclareMathOperator{\KK}{\mathbb{K}}
\DeclareMathOperator{\QQ}{\mathbb{Q}}
\DeclareMathOperator{\ZZ}{\mathbb{Z}}
\DeclareMathOperator{\FF}{\mathbb{F}}
\DeclareMathOperator{\TT}{\mathbb{T}}
\DeclareMathOperator{\GL}{GL}
\DeclareMathOperator{\SL}{SL}
\let\sl\relax\DeclareMathOperator{\sl}{\mathfrak{sl}}
\DeclareMathOperator{\gl}{\mathfrak{gl}}
\DeclareMathOperator{\rk}{rk}
\DeclareMathOperator{\mrk}{mrk}
\DeclareMathOperator{\mbrk}{\underline{\mrk}}
\DeclareMathOperator{\im}{im}
\let\span\relax\DeclareMathOperator{\span}{span}
\DeclareMathOperator{\Sym}{Sym}
\DeclareMathOperator{\tr}{tr}
\DeclareMathOperator{\la}{\langle}
\DeclareMathOperator{\ra}{\rangle}
\newcommand{\ol}[1]{\overline{#1}}
\title{The monic rank}
\author{Arthur Bik}
\address{University of Bern, Mathematical Institute, Alpeneggstrasse 22,
3012 Bern, Switzerland}
\email{arthur.bik@math.unibe.ch}
\author{Jan Draisma}
\address{University of Bern, Mathematical Institute, Sidlerstrasse 5,
3012 Bern, Switzerland, and Eindhoven University of Technology}
\email{jan.draisma@math.unibe.ch}
\author{Alessandro Oneto}
\address{Barcelona Graduate School of Mathematics, and Universitat Polit\`ecnica de Catalunya, Av.da Diagonal 647 (ETSEIB), 08028 Barcelona, Spain}
\email{alessandro.oneto@upc.edu, aless.oneto@gmail.com}
\author{Emanuele Ventura}
\address{Department of Mathematics, Texas A\&M University, College Station, TX 77843-3368, USA}
\email{eventura@math.tamu.edu, emanueleventura.sw@gmail.com}
\thanks{AB was supported by JD's Vici grant. JD was partially supported by the NWO Vici grant entitled {\em Stabilisation in Algebra and Geometry}. AO acknowledges financial support from the Spanish Ministry of Economy and Competitiveness, through the Mar\'ia de Maeztu Programme for Units of Excellence in R$\&$D (MDM-2014-0445). EV acknowledges financial support by the grant 346300 for IMPAN from the Simons Foundation and the matching 2015-2019 Polish MNiSW fund. AO and EV thank the University of Bern (Switzerland) for the hospitality during visits where the project of the paper was discussed.}
\date{}
\begin{document}

\begin{abstract}
We introduce the {\em monic rank} of a vector relative to an
affine-hyperplane section of an irreducible Zariski-closed affine cone $X$. We show
that the monic rank is finite and greater than or equal to the
usual $X$-rank. We describe an algorithmic technique based on classical
invariant theory to determine, in concrete situations, the maximal
monic rank. Using this technique, we establish three new instances of
a conjecture due to B.~Shapiro which states that a binary form of
degree $d\cdot e$ is the sum of $d$ $d$-th powers of forms of
degree $e$. Furthermore, in the case where $X$ is the cone of
highest weight vectors in an irreducible representation---this includes
the well-known cases of tensor rank and symmetric rank---we raise the
question whether the maximal rank {\em equals} the maximal monic rank.
We answer this question affirmatively in several instances.
\end{abstract}

\maketitle


\section{Introduction}\label{sec:intro}

Let $\KK$ be an algebraically closed field of characteristic
zero. All our vector spaces and algebraic varieties will be
over $\KK$, finite-dimensional, reduced and identified with their sets of $\KK$-points. 

\subsection*{Monic secant varieties and monic rank}
Let $V$ be a finite-dimensional vector space.  Let $X\subseteq V$
be an irreducible Zariski-closed affine cone such that its $\KK$-linear span 
equals~$V$, i.e., the cone $X$ is non-degenerate. 
A very fruitful field of research investigates the problem of minimally decomposing an element $v \in V$ as a sum of points on $X$. Following well-established terminology in recent literature, we call the minimal number of points for which this is possible the \textit{$X$-rank} of $v$ and we denote it by $\rk_X(v)$. We refer to \cite{LandsbergBook, FourLectures, HitchhikerGuide} and their references for an exhaustive exposition of the problem. In the cases where $V$ is, for example, some vector space of tensors and $X$ is the subvariety of rank~$\leq 1$ tensors in~$V$, the study of $X$-ranks has very interesting relations with fields in applied mathematics.
 In this paper we introduce a new, but related, type of rank.\bigskip

Let $h\in V^{*}\setminus\{0\}$ 
be a non-zero linear function and consider the affine hyperplane
$$
H = \{v\in V \mid h(v) = 1\}\subseteq V.
$$
We write $X_1$ for the affine-hyperplane section $X\cap H$ of $X$.

\begin{definition}
Let $k$ be a positive integer. The {\em $k$-th open secant variety} of $X_1$ is the set 
$$
\osec_k X_1 := \{ p_1+\cdots+p_k \mid p_1,\dots,p_k\in X_1 \}. 
$$
This is a subset of $kH := \{kp \mid p \in H\}$ where $kp$ is the sum of $k$ copies of $p$. We define $\sigma_k X_1$ to be the Zariski
closure of $\osec_k X_1$ and call this set the 
{\em $k$-th secant variety} of $X_1$. We also call $\osec_k X_1$ the 
{\em $k$-th open monic secant variety} of $X$ and $\sigma_k X_1$ the 
{\em $k$-th monic secant variety} of $X$. 
\end{definition}

Since $H$ is an affine space, we have $kH = \osec_k H =  \sigma_k H$. And, for any $X$, we have 
$$
k X_1 \subseteq \osec_k X_1 \subseteq \sigma_kX_1.
$$
However as we shall see, both inclusions can be strict. We now
define the monic rank of a vector $v\in V$ with $h(v)\neq0$.

\begin{definition}
Let $v\in V\setminus \ker(h)$ be a vector. The {\it monic rank} of $v$ is defined to be
$$
\mrk_{X,h}(v) := \inf\left\{ k\in\ZZ_{\geq1} ~\middle|~ \frac{k}{h(v)}\cdot v\in \osec_kX_1 \right\}.
$$
Similarly, the {\em monic border rank} of $v$ is
$$
\mbrk_{X,h}(v) :=\inf \left\{ k\in\ZZ_{\geq1} ~\middle|~ \frac{k}{h(v)}\cdot v \in \sigma_kX_1\right\}.
$$
\end{definition}

The following example is illustrative for the rest of our results. 

\begin{example}
Let $V=\KK[x,y]_{(2)}$ be the vector space of binary forms of degree $2$ and let $X \subseteq V$ be the subset of squares of linear forms. We consider the linear function $h\in V^*\setminus\{0\}$ which selects the coefficient of $x^2$. So
$$
h\colon V \rightarrow \KK, \quad ax^2+bxy+cy^2 \mapsto a.
$$
We get $H=\{x^2 + bxy +cy^2 \mid b,c \in \KK\}$ and $X_1=X\cap H=\{(x+a)^2 \mid a \in \KK\}$.
Now, an element of $V$ is contained in the second open monic secant $\osec_2 X_1$ if and only if it equals 
$$
(x+a_1)^2 + (x+a_2)^2=2x^2 + 2(a_1+a_2)x+(a_1^2+a_2^2)
$$
for some $a_1,a_2\in\KK$. Notice that the polynomials $2(a_1+a_2)$ and $a_1^2+a_2^2$ generate the ring
of symmetric polynomials in the variables $a_1$ and $a_2$, i.e., the invariant ring $\KK[a_1,a_2]^{\mathfrak S_2}$.  Here $\mathfrak S_2$ is the symmetric group on two letters and acts by permuting $a_1$ and $a_2$. From classical invariant theory, we know that the map 
\[ 
\KK^2 \to \KK^2, \quad (a_1,a_2) \mapsto (2(a_1+a_2),a_1^2+a_2^2)
\]
is a finite morphism. Thus, it is closed and dominant and so it is also surjective. Hence 
$$
\osec_2 X_1=\sigma_2 X_1= 2H.
$$
See Proposition \ref{prop:Closed} and the proof of Theorem \ref{thm:Shapiro} for an explanation. We find that any $v \in V \setminus\ker(h)$ satisfies $\mrk_{X,h}(v) \leq 2$.
 \hfill $\clubsuit$
\end{example}

\subsection*{Main results}

{\em A priori} it is not clear that the monic rank of an element of $V\setminus\ker(h)$ is even finite. This is our first foundational result.

\begin{theorem} \label{thm:Basic}
The function $k \mapsto \dim \sigma_k X_1$
is strictly increasing until it coincides with its maximal value $\dim H=\dim V-1$ and constant from then on.
Let $k_0$ be the minimal $k$ integer for which $\sigma_k X_1=kH$ holds. 
Then for any $v \in V \setminus \ker(h)$, we have
\[ \rk_X(v) \leq \mrk_{X,h}(v) \leq 2k_0.\]
In particular, the monic rank is finite.
\end{theorem}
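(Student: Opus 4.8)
The theorem has three parts: (1) the dimension sequence $k\mapsto\dim\sigma_kX_1$ is strictly increasing until it hits $\dim H$ and constant thereafter; (2) with $k_0$ the first index at which $\sigma_kX_1=kH$, every $v\notin\ker(h)$ has $\mrk_{X,h}(v)\le 2k_0$; (3) $\rk_X(v)\le\mrk_{X,h}(v)$.

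The plan is to start with the easy inequality (3). If $\mrk_{X,h}(v)=k$, then by definition $\frac{k}{h(v)}v=p_1+\cdots+p_k$ with each $p_i\in X_1\subseteq X$; multiplying through by $\frac{h(v)}{k}$ and using that $X$ is a cone shows $v$ is a sum of $k$ points of $X$, so $\rk_X(v)\le k$. This needs no closure arguments.

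Next I would treat the dimension statement (1). Each $\sigma_kX_1$ is irreducible (closure of the image of the irreducible variety $X_1^k$ under addition) and sits inside $\sigma_{k+1}X_1$ via $p\mapsto p+p_0$ for a fixed $p_0\in X_1$, so the dimensions are nondecreasing and bounded by $\dim kH=\dim H=\dim V-1$. The key claim is: if $\dim\sigma_{k+1}X_1=\dim\sigma_kX_1$ then already $\sigma_kX_1=kH$. Here I would use a standard irreducibility/translation argument: pick a smooth point $p$ of $X_1$ and consider the map $X_1\times\osec_kX_1\to\osec_{k+1}X_1$; if dimensions do not grow, then generically adding a point of $X_1$ stays inside a translate of $\sigma_kX_1$, which forces the ``direction space'' (the linear span of $X_1-X_1$, or the tangent directions along $X_1$ at a general point, suitably translated) to already be contained in the direction space of $\sigma_kX_1$. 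Since $X$ spans $V$ and $X_1$ spans the affine hyperplane $H$ (this should be argued from non-degeneracy of $X$ — e.g. $X_1-p$ spans $\ker(h)$), iterating shows $\sigma_kX_1$ fills all of $kH$. I expect this ``no strict growth $\Rightarrow$ saturation'' step to be the main obstacle, since it requires care with the affine (rather than linear) secant construction and with the possibility that $\osec_kX_1$ is not closed; working with tangent spaces at general points and Terracini-type reasoning, adapted to the hyperplane section, is the natural tool.

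Finally, for the bound (2): once $\sigma_{k_0}X_1=k_0H$, I claim every point of $k_0H$ — in particular $\frac{k_0}{h(v)}v$ — lies in $\osec_{2k_0}X_1$, i.e. is an honest (not just limiting) sum of $2k_0$ points of $X_1$. The idea is that $\osec_{k_0}X_1$ is a constructible dense subset of $k_0H$, so it contains a dense open $U$; then for any $q\in k_0H$ both $q-q'$ considerations or, more cleanly, the set $\{q-u:u\in U\}$ meets $\osec_{k_0}X_1$ because two dense opens in the irreducible affine space $k_0H$ intersect. Concretely: $\frac{2k_0}{h(v)}v\in 2k_0H=\osec_{k_0}X_1+\osec_{k_0}X_1$ is not quite immediate since we need the two summands to be honest points; but writing $w=\frac{2k_0}{h(v)}v$, the set $w-\osec_{k_0}X_1$ and $\osec_{k_0}X_1$ are each dense constructible in $k_0H$, hence meet, giving $w=p+p'$ with $p,p'\in\osec_{k_0}X_1$, i.e. $w$ is a sum of $2k_0$ points of $X_1$. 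Rescaling by $\frac{h(v)}{2k_0}$ and unwinding the definition yields $\mrk_{X,h}(v)\le 2k_0$. (The referenced Proposition \ref{prop:Closed} presumably supplies the closedness/constructibility input that makes this density argument rigorous; I would cite it here rather than reprove it.) Finiteness of the monic rank is then immediate.
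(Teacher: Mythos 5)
Your parts (2) and (3) match the paper's proof almost exactly: the lower bound $\rk_X(v)\le\mrk_{X,h}(v)$ is immediate from the definitions, and for the upper bound the paper also sets $\tilde v=\frac{2k_0}{h(v)}v$ and intersects $\tilde v-\osec_{k_0}X_1$ with $\osec_{k_0}X_1$ inside $k_0H$, each containing a dense open of $k_0H$ because $\osec_{k_0}X_1$ is a constructible set whose closure is the irreducible affine space $k_0H$. One small correction there: this density input is just Chevalley's constructibility theorem; Proposition~\ref{prop:Closed} is not invoked here (it belongs to the invariant-theory machinery of Section~3 and is used to show closedness of $\osec_kX_1$ in specific, torus-equipped situations).

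The real issue is part (1), which you yourself flag as ``the main obstacle'' and leave as a Terracini-flavoured sketch. As written this is a gap: you do not explain how ``the direction space of $X_1$ is contained in that of $\sigma_kX_1$'' (a tangent-space statement at a general point) globalises to $\sigma_kX_1=kH$, and secant tangent spaces of an affine hyperplane section do not come with a ready-made Terracini lemma. The paper avoids this entirely with two elementary steps. First, once $\dim\sigma_kX_1=\dim\sigma_{k+1}X_1$, for any fixed $p_1\in X_1$ the translation $v\mapsto v+p_1$ carries $\sigma_kX_1$ isomorphically onto $\sigma_{k+1}X_1$ (same dimension, both irreducible, one contained in the other), and then the identity $v+p_2=(v-p_1)+p_2+p_1$ pushes this forward to show $\sigma_{k+1}X_1\cong\sigma_{k+2}X_1$, so the dimension sequence is constant from the first repeat onward. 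Second, writing $k_0$ for the first repeat, the containments $(a+b)\sigma_{k_0}X_1\subseteq a\sigma_{k_0}X_1+b\sigma_{k_0}X_1\subseteq\sigma_{(a+b)k_0}X_1$ between closed irreducible sets of equal dimension force equality, so $\frac{a}{a+b}p_1+\frac{b}{a+b}p_2\in\sigma_{k_0}X_1$ for all $p_1,p_2\in\sigma_{k_0}X_1$ and all positive integers $a,b$; a line meeting a closed set in infinitely many points lies inside it, so $\sigma_{k_0}X_1$ is an affine subspace, which by non-degeneracy of $X$ must be all of $k_0H$. Your translation-invariance instinct (``$X_1-X_1$ spans $\ker(h)$'') is not wrong and can be made to work, but the paper's route sidesteps tangent spaces and the delicate ``iterate to fill'' step entirely; I would recommend replacing your sketch by this two-step argument rather than trying to make Terracini rigorous in the affine-hyperplane setting.
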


\begin{definition}
The minimal integer $k$ for which
$\sigma_kX_1 = kH$ is called the {\it generic monic rank} of elements of $V \setminus \ker(h)$.  
\end{definition}

Theorem \ref{thm:Basic} mimics a result of Blekherman and
Teitler \cite[Theorem 3]{Blekherman15} that relates the maximal
$X$-rank to the generic $X$-rank. Another remarkable general result on maximal $X$-rank is \cite[Proposition 5.1]{LanTei10}. Extensive literature has been devoted to the particular case of finding the maximal rank in the space of homogeneous polynomials of given degree and given number of variables with respect to the space of pure powers of linear forms. See for example \cite{Seg42, Kle99, CS11, BGI11, Jel14, DP15a, DP15b, BT16}.

\medskip

Our main motivations for introducing and studying monic ranks are questions concerning the maximal ranks of elements of particular varieties, in which the maximal rank turns out to be bounded by the maximal monic rank. The first instance of such a question comes from an interesting conjecture due to B. Shapiro (see \cite[Conjecture 1.4]{LORS}) which states that every binary form of degree $d\cdot e$ is the sum of $d$ $d$-th powers of forms of degree~$e$. We show that, in a few particular cases, a stronger statement holds: every binary form of degree $d\cdot e$ whose coefficient at $x^{de}$ equals $d$ is the sum of $d$ $d$-th powers of {\em monic} forms of degree $e$. 

\begin{theorem} \label{thm:Shapiro}
Let $d$ and $e$ be positive integers. Let $V=\KK[x,y]_{(de)}$ be the space
of binary forms of degree $de$, take $X=\{f^d \mid f \in \KK[x,y]_{(e)}\}$ 
and let $h$ be the linear function that maps a form $q \in
\KK[x,y]_{(de)}$ to its coefficient at $x^{de}$. Then, in the following
cases, the maximal monic rank is at most $d$:
\begin{enumerate}
\item[(i)] $e=1$ and arbitrary $d$;
\item[(ii)] $d\in\{1,2\}$ and arbitrary $e$; 
\item[(iii)] $d=3$ and $e\in\{2,3,4\}$;
\item[(iv)] $d=4$ and $e=2$.
\end{enumerate}
In particular, in these cases, the $X$-rank of any $q \in V$ is at most $d$.
\end{theorem}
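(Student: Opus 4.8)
The plan is to realise $\osec_d X_1$ as the image of an explicit polynomial map and to prove that this map is \emph{finite} onto the affine hyperplane $dH=\{q\in V\mid h(q)=d\}$. Identifying $\KK^e$ with the space of monic binary forms of degree $e$ through their $e$ non-leading coefficients, set
\[
  \Phi_d\colon (\KK^e)^d\longrightarrow dH,\qquad (f_1,\dots,f_d)\longmapsto f_1^d+\cdots+f_d^d .
\]
Then $\im\Phi_d=\osec_d X_1$, and source and target both have dimension $de$. If $\Phi_d$ is finite, then it is closed with finite fibres, so $\im\Phi_d$ is a closed subvariety of the irreducible variety $dH$ of dimension $de=\dim dH$, hence $\im\Phi_d=dH$. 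For $q\in V\setminus\ker h$, rescaling by $d/h(q)$ gives $\tfrac{d}{h(q)}\,q\in\osec_d X_1$, whence $\mrk_{X,h}(q)\le d$ and $\rk_X(q)\le d$ by Theorem~\ref{thm:Basic}. For the last assertion applied to an arbitrary $q\in V$, one uses that $X$ is stable under the change-of-coordinates action of $\GL_2$ on $\KK[x,y]_{(de)}$: for $q\ne 0$ pick $g\in\GL_2$ with $h(g\cdot q)\ne 0$, so that $\rk_X(q)=\rk_X(g\cdot q)\le d$ (and $\rk_X(0)=0$). Thus everything reduces to proving finiteness of $\Phi_d$ in each of the four cases.

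For finiteness I would exploit a weighted-homogeneous structure: assign the $j$-th non-leading coefficient of each $f_i$ the weight $j$ and the coefficient of $x^{de-k}y^k$ in $q\in dH$ the weight $k$. These weights are positive, the coordinate rings of $(\KK^e)^d$ and of $dH$ (with origin $dx^{de}$) become positively graded, and $\Phi_d$ is equivariant for the associated $\KK^*$-actions, which fix the vertices $f_i=x^e$ and $q=dx^{de}$. By the graded Nakayama lemma, $\Phi_d$ is finite if and only if the fibre $\Phi_d^{-1}(dx^{de})$ is finite; and since this fibre is $\KK^*$-stable and contains the vertex $(x^e,\dots,x^e)$, finiteness of $\Phi_d$ is equivalent to the implication
\begin{equation}
  f_1^d+\cdots+f_d^d=d\,x^{de},\ \ f_i\text{ monic of degree }e\ \ (1\le i\le d)\ \Longrightarrow\ f_1=\cdots=f_d=x^e. \tag{$\star$}
\end{equation}
(This already recovers the introductory example, which is the case $d=2$, $e=1$.)

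It remains to establish $(\star)$ case by case. In $(i)$, writing $f_i=x-\alpha_i y$ turns $(\star)$ into the vanishing of the power sums $p_1(\alpha)=\cdots=p_d(\alpha)=0$, which in characteristic zero forces $\alpha=0$. In $(ii)$ with $d=2$, factor $f_1^2+f_2^2=(f_1+\sqrt{-1}\,f_2)(f_1-\sqrt{-1}\,f_2)$ into two factors of degree exactly $e$ (their leading coefficients $1\pm\sqrt{-1}$ are nonzero); unique factorisation in $\KK[x,y]$ and the shape $2x^{2e}$ of the product force each factor to be a scalar multiple of $x^e$, so $f_1=f_2=x^e$. For $(iii)$ and $(iv)$ I would substitute $f_i=x^e+g_i$ with $g_i$ in the linear span of $x^{e-1}y,\dots,y^e$, expand $\sum_i(x^e+g_i)^d=d\,x^{de}$ into its weighted-homogeneous layers, and solve the resulting polynomial system for the $g_i$: it is invariant under the $\mathfrak S_d$-action permuting the $g_i$ and is graded, so it can be rewritten in terms of the power sums of the $g_i$, which cuts down the number of variables before an elimination confirms that $g_i=0$ is the only solution. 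This last, computer-assisted step is the algorithmic, classical-invariant-theoretic component.

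The main obstacle is precisely $(\star)$ for $(iii)$ and $(iv)$. Unlike the case $d=2$, no short factorisation is available: the identity $a^3+b^3+c^3-3abc=\prod_{j=0}^{2}(a+\omega^j b+\omega^{2j}c)$ (with $\omega$ a primitive cube root of unity) fails to eliminate the mixed term $f_1f_2f_3$, and there is no comparable identity for $d=4$. One is therefore forced to bound the fibre $\Phi_d^{-1}(dx^{de})$ by an explicit computation, and it is the growth of this computation — together with the genuine possibility that $(\star)$, equivalently the finiteness of $\Phi_d$, fails once $(d,e)$ increases — that confines the theorem to the cases listed.
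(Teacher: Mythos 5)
Your high-level framework matches the paper's: both reduce the theorem to showing that the sum map, from the space of $d$-tuples of monic degree-$e$ forms to $dH$, is finite, and both derive finiteness from the fibre over the cone point $dx^{de}$ being a single point (your $(\star)$, the paper's Conjecture~\ref{conj:MonicShapiro}) via a graded Nullstellensatz-plus-Nakayama argument. The paper first factors through the $\mathfrak S_d$-quotient and applies Proposition~\ref{prop:Hilbert}; you apply the same kind of argument to $\Phi_d$ directly, which is a legitimate simplification here because the source is itself an affine space. Case~(i) and the reduction of the last assertion via the $\GL_2$-action are the same as in the paper.

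For case~(ii) with $d=2$ you take a genuinely different and simpler route. The factorisation $f_1^2+f_2^2=(f_1+\sqrt{-1}\,f_2)(f_1-\sqrt{-1}\,f_2)$ over the algebraically closed field $\KK$, combined with unique factorisation and the nonvanishing of the leading coefficients $1\pm\sqrt{-1}$, proves $(\star)$ for $(2,2,e)$ in a few lines. The paper instead invokes the (non-monic) sum-of-two-squares decomposition from \cite{FOS} and then upgrades to a monic one via Proposition~\ref{prop:symmmonicrankmatrices} on symmetric matrices; that argument proves surjectivity of $\Phi_2$ but not $(\star)$. So your proof of (ii) is shorter, self-contained, and strictly stronger (it also yields closedness of $\osec_2 X_1$ and verifies Conjecture~\ref{conj:MonicShapiro} for $(2,2,e)$).

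For cases~(iii) and~(iv) there is a genuine gap in the computational step. Your proposal to ``rewrite in terms of the power sums of the $g_i$'' does not work as stated: the $g_i$ are vectors in $\KK^e$, and for $e\geq 2$ the ring of multisymmetric $\mathfrak S_d$-invariants of $(\KK^e)^d$ is not a polynomial ring, so there is no change of variables to power sums that reduces the number of unknowns. The paper makes the Gr\"obner basis computation feasible through two devices absent from your proposal: the inductive dehomogenisation of Remark~\ref{rmk: inductive strategy}, which lets one set a top coefficient $c_{1,e+1}=1$ once the conjecture is known for $(k,d,e)$; and, crucially, the observation that the system is homogeneous for the $\TT$-grading and has integer coefficients, so emptiness of the corresponding weighted-projective scheme over $\ZZ$ can be certified over $\overline{\FF_p}$ for a single prime $p$ (the paper uses $p=101$). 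The paper states explicitly that this modular reduction is what lets the computation finish; without it, a direct elimination over $\QQ$ in, say, the case $(d,e)=(3,4)$ is unlikely to terminate.
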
 

In terms of the (non-monic) Shapiro's conjecture on writing binary forms of degree $de$ as sums of $d$-th powers:
\begin{itemize}
\item the case $e = 1$ is classical (see, e.g., \cite[Theorem 4.9]{RezLength}); 
\item the case $d = 1$ is trivial;
\item the case $d = 2$ is quite immediate (see \cite[Theorem 5]{FOS}); and
\item the case $(d,e)=(3,2)$ was proven in \cite[Theorem 3.1]{LORS}.
\end{itemize}
As far as we know, the cases where $(d,e) \in \{(3,3),(3,4),(4,2)\}$ are new.\bigskip

We next turn our attention to a particularly nice class of varieties.
Let $G$ be a connected reductive algebraic group over $\KK$ and let $V$
be an irreducible finite-dimensional rational representation of $G$. Let $X \subseteq V$
be the cone of highest-weight vectors, i.e., the affine cone over the
unique closed $G$-orbit in $\PP(V)$. The latter projective variety is called
a {\it homogeneous variety}. Let $h$ be a highest weight vector
in the dual $G$-module $V^*$. 

\begin{question} \label{que:Minorbit}
In the setting above, is the maximal $X$-rank of a vector in
$V$ equal to the maximal {\em monic} $X$-rank of a vector in $V\setminus\ker(h)$?
\end{question}

Let $V$ be a finite-dimensional vector space. Then we denote the $n$-th symmetric power of~$V$ by $\Sym^n(V)$.

\begin{theorem}\label{thm:Minorbit}
The answer to Question~\ref{que:Minorbit} is affirmative in the following instances:
\begin{enumerate}
\item[(i)] $G = \GL_2$ and $V=\Sym^d(\KK^2) \cong\KK[x,y]_{(d)}$;
\item[(ii)] $G=\GL_m \times \GL_n$ and $V=\KK^{m\times n}$;
\item[(iii)] $G=\GL_n$ and $V= \Sym^2(\KK^n) \cong
\KK[x_1,\dots,x_n]_{(2)} \cong \{A\in\KK^{n\times n}\mid A=A^T\}$; 
\item[(iv)] $G=\GL_2 \times \GL_2 \times \GL_2$ and $V=\KK^2 \otimes \KK^2
\otimes \KK^2$; and
\item[(v)] $G=\SL_n$ and $V=\sl_n$ is its adjoint representation.
\end{enumerate}
\end{theorem}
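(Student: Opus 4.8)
The plan is to treat each of the five cases separately, but all of them follow a common strategy dictated by the general inequality $\rk_X(v) \le \mrk_{X,h}(v)$ from Theorem \ref{thm:Basic}: the maximal monic rank is always at least the maximal $X$-rank, so in each case it suffices to prove the reverse inequality, namely that every $v \in V \setminus \ker(h)$ has monic rank at most the known maximal $X$-rank $r$ for that homogeneous variety. Thus I would first collect, from the literature, the precise value of the maximal $X$-rank in each of the five situations: for (i) binary forms the maximal symmetric rank of a degree-$d$ binary form is $d$ (Comas--Seiguer / classical); for (ii) $m \times n$ matrices it is $\min(m,n)$; for (iii) symmetric $n \times n$ matrices (equivalently quadrics) it is $n$; for (iv) $2\times 2\times 2$ tensors it is $3$; and for (v) $\mathfrak{sl}_n$ with the adjoint action the generic/maximal rank is known (every traceless matrix is a sum of a bounded number of nilpotent rank-one, i.e. highest-weight, elements). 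Having fixed $r$, the goal in each case becomes: given $v$ with $h(v) \ne 0$, produce an explicit decomposition $\frac{r}{h(v)} v = p_1 + \dots + p_r$ with each $p_i \in X_1 = X \cap H$.

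For the concrete cases (ii), (iii), (iv) this should reduce to elementary linear algebra, exploiting that the stabilizer of $h$ in $G$ still acts with enough freedom. In case (ii), $X$ is the rank-$\le 1$ cone, $h$ picks out one matrix entry, say the $(1,1)$-entry, and $X_1$ consists of rank-one matrices with $(1,1)$-entry equal to $1$, i.e. matrices $uv^T$ with $u_1 v_1 = 1$; after scaling, one writes a matrix $A$ with $A_{11} \ne 0$ as a sum of $\min(m,n)$ such matrices by a suitable Gaussian-elimination / rank-one-peeling argument, arranging each summand to have nonzero $(1,1)$-entry. Case (iii) is the same idea for symmetric matrices using $p_i = \ell_i^2$ with $\ell_i$ a monic linear form (normalizing the coefficient of $x_1^2$), and case (iv) is a small explicit computation on the well-understood orbit structure of $2\times 2\times 2$ tensors (the generic and maximal ranks being $2$ and $3$, with the rank-$3$ locus being a single hypersurface orbit closure). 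Case (i) is the instance already illustrated in the Example and is closely tied to Theorem \ref{thm:Shapiro}(ii) and its generalization: one wants $q \in \KK[x,y]_{(d)}$ with nonzero $x^d$-coefficient to be a sum of $d$ $d$-th... powers of \emph{monic linear} forms $(x + a_i)$, and surjectivity follows because the power-sum-type map $(a_1,\dots,a_d) \mapsto (\text{coefficients of } \sum (x+a_i)^d)$ is, up to the leading normalization, the map to the symmetric functions, hence finite and dominant, hence surjective — exactly the mechanism in the Example, now cited via Proposition \ref{prop:Closed}.

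Case (v), the adjoint representation of $\SL_n$, I expect to be the genuine obstacle and the part requiring real work. Here $X$ is the minimal nilpotent orbit closure, i.e. the cone of matrices of the form $vw^T$ with $w^T v = 0$ (trace zero, rank $\le 1$), $h \in \mathfrak{sl}_n^* \cong \mathfrak{sl}_n$ is a highest weight vector of the dual, which one can take to be (the functional dual to) the matrix unit $E_{n1}$ or equivalently $h(A) = A_{1n}$, and $X_1$ is the slice of rank-one nilpotents on which this coordinate equals $1$. The task is to show every traceless matrix $A$ with $A_{1n} \ne 0$ is, after scaling, a sum of a controlled number of such normalized rank-one nilpotents, and to match that number to the known maximal adjoint $X$-rank. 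I would approach this by first handling a generic (regular semisimple, or regular) $A$ via an explicit construction — writing $A$ in a basis adapted to $h$ and peeling off rank-one nilpotent summands while maintaining the normalization constraint $(p_i)_{1n} = 1$ and the trace-zero constraint — and then argue that the non-generic $A$ are handled either by the semicontinuity packaged in Proposition \ref{prop:Closed} or by a direct degeneration argument, finally invoking Theorem \ref{thm:Basic} to close the gap between the bound obtained and the exact maximal rank. The bookkeeping of the two simultaneous linear constraints (rank one \emph{and} nilpotent \emph{and} prescribed entry), together with pinning down the exact maximal adjoint rank in the literature, is where the difficulty concentrates.
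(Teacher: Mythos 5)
Your high-level plan is the right one and matches the paper's: the inequality $\rk_X(v) \le \mrk_{X,h}(v)$ together with $G$-invariance of $X$-rank shows that the maximal monic rank is always $\ge$ the maximal $X$-rank, so it suffices in each case to exhibit, for every $v$ with $h(v)\neq 0$, a monic decomposition of length equal to the known maximal $X$-rank. Cases (i)--(iii) are essentially fine as you describe them; the paper's Propositions~\ref{prop:monicrankmatrices} and \ref{prop:symmmonicrankmatrices} implement exactly your ``rank-one peeling with a normalization constraint'' via a small $2\times 2$ identity that splits off a monic rank-one summand while decreasing the rank, and (i) is the power-sum/finite-morphism argument from the Example (the reference should be Theorem~\ref{thm:Shapiro}(i), not (ii), but the argument is correct).

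There are two genuine gaps. First, in case (iv) you cannot lean on Proposition~\ref{prop:Closed} or on ``semicontinuity'' to dispose of degenerate tensors: the paper in fact shows $\osec_2 X_1 \subsetneq \sigma_2 X_1$, so the open monic secants of the Segre $2\times2\times2$ cone are \emph{not} all closed, and there are tensors in $2H$ of border rank $2$ that require length $3$ monic decompositions. What the paper actually does is normalize the tensor with a unipotent (additive $\KK^3$) action adapted to $h$, compute $\osec_2 X_1$ and $\sigma_2 X_1$ explicitly, and then show by a direct construction that $\osec_3 X_1 = 3H$. Your sketch doesn't contain the argument that bridges the closedness failure.

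Second, and more seriously, case (v) is not a proof. You correctly anticipate it is the hard case, but the ideas you list would not close it. Invoking Theorem~\ref{thm:Basic} gives the bound $2k_0$ on the monic rank, and for $\sl_n$ this is roughly $2\lceil (n^2-2)/(2n-3)\rceil$, which is strictly larger than the maximal $X$-rank $n$ already for small $n$ (e.g.\ $n=3$ gives $2k_0=6 > 3$); so that route cannot ``close the gap.'' A degeneration argument from the regular locus also fails unless one first knows $\osec_n X_1$ is closed, which is precisely what is at issue. You also do not pin down the maximal $X$-rank, which requires the Baur--Draisma result that $\osec_k X$ (ordinary) is the variety of trace-zero matrices of rank $\le k$, so the maximal $X$-rank is $n$. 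The paper's actual proof is a careful peeling argument in which each subtraction of a monic rank-one nilpotent must preserve an auxiliary property $\mathscr{P}(A)$ (neither $Az$ nor $A^T\omega$ is an eigenvector), proved via Lemmas~\ref{lm:Step1}--\ref{lm:Choice}, Theorem~\ref{thm:oseckX1}, and Lemmas~\ref{lm:HalfP1}--\ref{lm:nmin1}, including tangent-space surjectivity arguments for certain projections. None of this structure appears in your outline, so case (v) should be regarded as unproved in your proposal.
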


\begin{remark}
The first case of the theorem corresponds to the Waring rank of
binary forms, which is also (i) in Theorem~\ref{thm:Shapiro}. The second case corresponds to the
usual matrix rank of $m\times n$ matrices. Case (iii)
corresponds to the symmetric matrix rank of symmetric $n \times
n$ matrices (or to the Waring rank of quadrics in $n$
variables). Case (iv) corresponds to the tensor rank of $2
\times 2 \times 2$ tensors. And lastly, case (v) corresponds
to the $X$-rank of trace-zero $n\times n$ matrices with $X$ being the affine cone over the
projective adjoint variety of incident
point-hyperplane pairs in $\PP^{n-1}$.
\end{remark}

Admittedly, this is not much evidence for an affirmative answer to
Question~\ref{que:Minorbit} in general. Moreover, our proofs in each of these cases
are {\em ad hoc}. We therefore appeal to the reader for 
different approaches to Question~\ref{que:Minorbit}. In
particular, an affirmative answer to that question for $G=\GL_n$ and
$V=\KK[x_1,\ldots,x_n]_{(d)}$ would imply that a new lower bound~of
$$
\left\lceil\frac{\dim V-1}{\dim X-1}\right\rceil=\left\lceil\frac{1}{n-1}\left(\binom{d+n-1}{d}-1 \right) \right\rceil 
$$
on the maximal rank of a form of degree $d$. This is
almost always bigger than 
$$
\left\lceil\frac{1}{n}\binom{d+n-1}{d} \right\rceil, 
$$
which (except for quadrics and finitely many further exceptions)
is the generic Waring rank by the Alexander-Hirschowitz theorem
\cite{Alexander95}. This would give a positive answer to the open question of whether the maximal rank is always bigger than the generic rank when we have $d \geq 3$.

\subsection*{Structure of the paper}
In Section \ref{sec:Basics}, we lay the foundations of the notion of monic rank and prove Theorem~\ref{thm:Basic}. In Section \ref{sec:Invariant}, we develop some machinery from classical invariant
theory to compute the maximal monic rank in certain explicit cases. We
apply this machinery to the proof of Theorem~\ref{thm:Shapiro} in
Section \ref{sec:Shapiro}. Finally, in Section \ref{sec:Minorbit}, we
establish Theorem~\ref{thm:Minorbit}.

\section{The basics of monic rank}\label{sec:Basics}

Recall the following notation from the introduction: 
\begin{itemize}
	\item $X \subseteq V$ denotes a non-degenerate irreducible affine cone;
	\item $h \in V^*\setminus\{0\}$ is a non-zero linear function; and
	\item $H = h^{-1}(1)$ is an affine hyperplane and $X_1 := X\cap H$.
\end{itemize} 
Theorem~\ref{thm:Basic} will be a direct consequence of Proposition \ref{prop:mongenexists} and Proposition \ref{prop:mrank-welldef}.

\begin{proposition} \label{prop:mongenexists}
The function $k \mapsto \dim \sigma_k X_1$ is strictly increasing until
it coincides with its maximal value, which is $\dim H=\dim V-1$, and constant from then on.  Consequently, the function $\mbrk_{X,h}$ is bounded. Moreover, its value at $v$ is an upper bound on the ordinary border $X$-rank of $v$ for all vectors $v\in V\setminus\ker(h)$.
\end{proposition}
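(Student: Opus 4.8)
The plan is to analyze the incidence variety of tuples of points of $X_1$ together with their sum, and use dimension-theoretic arguments on the fibers of the summation map to control $\dim \sigma_k X_1$. Concretely, consider the morphism
\[
s_k \colon X_1 \times \cdots \times X_1 \to kH, \qquad (p_1,\dots,p_k) \mapsto p_1 + \cdots + p_k,
\]
whose image is $\osec_k X_1$, so that $\sigma_k X_1 = \overline{\im s_k}$ and $\dim \sigma_k X_1 = \dim \overline{\im s_k}$. The key point is a ``productivity'' step: as long as $\sigma_k X_1 \subsetneq kH$, adding one more copy of $X_1$ must strictly increase the dimension. The cleanest way I would argue this is: if $\dim \sigma_{k+1} X_1 = \dim \sigma_k X_1$, then for a general point $q \in \osec_{k+1} X_1$ and a general decomposition $q = p_1 + \cdots + p_{k+1}$, the point $p_1 + \cdots + p_k$ varies in a full-dimensional (hence dense) subset of $\sigma_k X_1$ while $p_{k+1}$ ranges over all of $X_1$; since $X_1$ spans the affine hyperplane $H$ (because $X$ is non-degenerate, so $X_1 = X \cap H$ affinely spans $H$ — this needs a small lemma, see below), the Minkowski sum $\sigma_k X_1 + X_1$ cannot be contained in a translate of a proper subspace unless $\sigma_k X_1$ already fills out $kH$. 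Equivalently: if the dimension stabilizes at some value $< \dim H$, then $\sigma_k X_1$ is contained in an affine-linear subspace $L$ with $\sigma_k X_1 + X_1 \subseteq L$ as well, forcing $X_1$ into the direction space of $L$, contradicting that $X_1$ affinely spans $H$.

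The main obstacle is making this translation-invariance argument airtight, in particular the claim that $X_1 = X \cap H$ affinely spans $H$. This follows from non-degeneracy of $X$: since $X$ is a cone spanning $V$ and $h \not\equiv 0$ on $X$ (as $X$ spans $V$), pick $x_0 \in X$ with $h(x_0) = 1$; then for any $v \in V$ with $h(v) = 0$, choose $x \in X$ with $h(x) \neq 0$ and $x$ linearly independent from $x_0$ in a suitable sense — more carefully, $X$ spans $V$ so there exist $x_1, \dots, x_r \in X$ forming a basis of $V$; rescaling each $x_i$ (using that $X$ is a cone) to have $h(x_i) = 1$ when $h(x_i) \neq 0$, one checks the differences $x_i - x_j \in X_1 - X_1$ span $\ker h$, so $X_1$ affinely spans $H$. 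I would isolate this as a preliminary observation. Given it, the strict-increase-then-constant behavior follows: once $\sigma_k X_1 = kH$, clearly $\sigma_{k+1} X_1 = (k+1)H$ since $kH + X_1 = kH + H = (k+1)H$; and before that point, the dimension strictly increases by the argument above. The maximal value is $\dim kH = \dim H = \dim V - 1$.

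Finally I would deduce the two consequences. Boundedness of $\mbrk_{X,h}$: letting $k_0$ be the first $k$ with $\sigma_k X_1 = kH$ (which exists by the above), every $v \in V \setminus \ker h$ satisfies $\frac{k_0}{h(v)} v \in k_0 H = \sigma_{k_0} X_1$, so $\mbrk_{X,h}(v) \le k_0$. For the comparison with ordinary border $X$-rank: if $\mbrk_{X,h}(v) = k$, then $\frac{k}{h(v)} v \in \sigma_k X_1 \subseteq \sigma_k X$ (the ordinary $k$-th secant variety of the cone $X$), since $X_1 \subseteq X$ and $\sigma_k X$ is closed; as $\sigma_k X$ is a cone, rescaling back gives $v \in \sigma_k X$, i.e., the border $X$-rank of $v$ is at most $k = \mbrk_{X,h}(v)$.
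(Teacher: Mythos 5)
Your proof has a genuine gap in the central step: the claim that once $\dim\sigma_{k+1}X_1 = \dim\sigma_kX_1$, we must already have $\sigma_kX_1 = kH$. Your argument is that $\sigma_kX_1 + X_1 \subseteq \sigma_{k+1}X_1$ has full affine span $(k+1)H$ (because $X_1$ affinely spans $H$) and therefore ``cannot be contained in a translate of a proper subspace.'' But this conflates the affine span of a variety with its dimension: an irreducible closed variety can affinely span a large ambient space while having small dimension (a high-degree plane curve spans $\KK^2$ but is $1$-dimensional). So knowing that $\sigma_{k+1}X_1$ is not contained in any proper affine-linear subspace of $(k+1)H$ gives you no information about $\dim\sigma_{k+1}X_1$, and equality of dimensions at consecutive steps does not by itself contradict $\dim\sigma_kX_1 < \dim H$. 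The fiber-dimension preamble (``$p_{k+1}$ ranges over all of $X_1$'') is in the right spirit but does not close this gap either: it shows at best that $q - X_1 \subseteq \sigma_kX_1$ for general $q$, and $-X_1 + X_1$ may still have dimension far below $\dim H$.

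The paper closes this gap in two steps, both missing from your sketch. First, fix $p_1 \in X_1$; if $\dim\sigma_kX_1 = \dim\sigma_{k+1}X_1$, then $\sigma_kX_1 + p_1$ is a closed irreducible subvariety of $\sigma_{k+1}X_1$ of equal dimension, hence equals it, and a short induction propagates this to all larger indices, so the dimension really is eventually constant. Second — and this is the step your proof has no replacement for — the paper observes that for positive integers $a,b$, the dilation $(a+b)\sigma_{k_0}X_1$ sits inside $a\sigma_{k_0}X_1 + b\sigma_{k_0}X_1 \subseteq \sigma_{(a+b)k_0}X_1$, and the two outer sets are closed, irreducible, and of the same dimension, hence all three agree. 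This forces $\frac{a}{a+b}p_1 + \frac{b}{a+b}p_2 \in \sigma_{k_0}X_1$ for all $p_1,p_2 \in \sigma_{k_0}X_1$ and all positive $a,b$; since a line either lies in a closed variety or meets it finitely, the full line through $p_1,p_2$ lies in $\sigma_{k_0}X_1$, so $\sigma_{k_0}X_1$ is an affine subspace. \emph{Only then} does the affine-spanning property of $X_1$ (which you correctly identified and proved as a preliminary lemma) finish the argument: $\sigma_{k_0}X_1 \supseteq k_0X_1$ affinely spans $k_0H$, so $\sigma_{k_0}X_1 = k_0H$. Your treatment of the two consequences (boundedness of $\mbrk_{X,h}$ and the comparison with ordinary border $X$-rank) is correct and matches the paper.
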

\begin{proof}
Let $p_1\in X_1$ be any point. Then $\sigma_kX_1+p_1\subseteq \sigma_{k+1} X$. So the function $k \mapsto
\dim \sigma_k X_k$ is weakly increasing. Since $\dim \sigma_k X_1$ is bounded from above by $\dim H$, there exists a $k$ such that $\dim \sigma_kX_1=\dim \sigma_{k+1} X_1$. Let $k$ be any positive integer with this property. Then, since both $\sigma_kX_1$ and $\sigma_{k+1}X_1$ are irreducible, the isomorphism 
\begin{eqnarray*}
kH &\to& (k+1)H\\
v &\mapsto& v+p_1
\end{eqnarray*}
restricts to an isomorphism between $\sigma_k X_1$ and $\sigma_{k+1} X_1$.
By definition, a general point on $\sigma_{k+2} X_1$ is of the form $v+p_2$ with
$v \in \sigma_{k+1} X_1$ and $p_2 \in X_1$ and we have
$$
v+p_2=(v-p_1)+p_2+p_1 \in \sigma_k X_1 + X_1 +p_1 \subseteq \sigma_{k+1} X_1+p_1.
$$
Therefore, the isomorphism
\begin{eqnarray*}
(k+1)H &\to& (k+2)H\\
w &\mapsto& w+p_1
\end{eqnarray*}
restricts to an isomorphism between $\sigma_{k+1} X$ and $\sigma_{k+2} X$. Now, let $k_0$ be the minimal value of $k$ for which $\dim \sigma_k X_1=\dim \sigma_{k+1}X_1$. Then we conclude that the function $k \mapsto \dim \sigma_kX_1$ is strictly increasing for $k < k_0$ and is constant for $k \geq k_0$.\bigskip

Next, we show that $\sigma_{k_0}X_1=k_0H$, which implies in particular that $\dim\sigma_{k_0}X_1=\dim H$. Let $a,b$ be positive integers. Then 
$$
(a+b) \sigma_{k_0} X_1 \subseteq a \sigma_{k_0} X_1 + b \sigma_{k_0 X_1} \subseteq \sigma_{(a+b)k_0} X_1.
$$
Since the leftmost and rightmost sets are closed, irreducible and of the same
dimension, all three sets coincide. For any $p_1,p_2 \in
\sigma_{k_0} X_1$, we have $a p_1 + b p_2 \in \sigma_{(a+b)k_0} X_1 = (a+b)\sigma_{k_0} X_1$.
Therefore
$$
\frac{a}{a+b} \cdot p_1 + \frac{b}{a+b}\cdot p_2 \in \sigma_{k_0} X_1.
$$
So the line through $p_1$ and $p_2$ intersects $\sigma_{k_0} X_1$ in infinitely many points. Hence this line must be entirely contained in $\sigma_{k_0} X_1$. Since this holds for all $p_1,p_2\in\sigma_{k_0}X_1$, we see that $\sigma_{k_0} X_1$ is an affine space. Since $X$ is non-degenerate, the affine span of $X_1$ coincides with~$H$.
So the affine span of $k_0 X_1 \subseteq \sigma_{k_0} X_1$ equals $k_0 H$. We conclude that 
$\sigma_{k_0} X_1=k_0 H$.\bigskip

For the last statement, note that $\sigma_{k_0} X_1$ is contained in the
ordinary $k_0$-th secant variety of $X$ and that the ordinary border
rank of a vector $v\in V$ does not change whenever we multiply the vector by a non-zero constant. 
\end{proof}

\begin{proposition}\label{prop:mrank-welldef}
Let $k_0<\infty$ be the generic monic rank of elements of $V\setminus\ker(h)$. Then, 
for every vector $v \in V \setminus \ker(h)$, we have $\rk_X(v) \leq \mrk_{X,h}(v) \leq 2 k_0$. 
In particular, the monic rank of $v$ is finite.
\end{proposition}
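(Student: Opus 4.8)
The plan is to establish the two inequalities separately, the left one being essentially formal and the right one carrying the content. For $\rk_X(v)\le \mrk_{X,h}(v)$, assume $\mrk_{X,h}(v)=k<\infty$ (otherwise there is nothing to prove). By definition $\tfrac{k}{h(v)}\,v=p_1+\cdots+p_k$ with $p_1,\dots,p_k\in X_1\subseteq X$; rescaling by the nonzero scalar $\tfrac{h(v)}{k}$ and using that $X$ is a cone, one gets $v=\sum_{i=1}^k \tfrac{h(v)}{k}p_i$ as a sum of $k$ points of $X$, so $\rk_X(v)\le k$. Finiteness of $\mrk_{X,h}$ will then follow from the right inequality, which is proved next.

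For the bound $\mrk_{X,h}(v)\le 2k_0$, fix $v\in V\setminus\ker(h)$ and put $u:=\tfrac{2k_0}{h(v)}\,v\in 2k_0H$. It suffices to show $u\in\osec_{2k_0}X_1$, since then $2k_0$ lies in the set over which the infimum defining $\mrk_{X,h}(v)$ is taken. I would argue in three steps. First, by Proposition~\ref{prop:mongenexists} and the definition of $k_0$ we have $\overline{\osec_{k_0}X_1}=\sigma_{k_0}X_1=k_0H$, which is an irreducible affine space; and $\osec_{k_0}X_1$, being the image of the morphism $X_1^{k_0}\to V,\ (p_1,\dots,p_{k_0})\mapsto p_1+\cdots+p_{k_0}$, is constructible by Chevalley's theorem, hence contains a dense Zariski-open subset $U$ of $k_0H$. (Here $X_1\neq\emptyset$ because $X$ is non-degenerate, so all of this makes sense.) Second, for every $a\in k_0H$ one has $h(u-a)=2k_0-k_0=k_0$, so $u-a\in k_0H$, and the map $k_0H\to k_0H,\ a\mapsto u-a$, is an isomorphism of varieties; hence $u-U$ is again a dense open subset of $k_0H$. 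Third, two dense open subsets of the irreducible variety $k_0H$ meet, so there is $a\in U\cap(u-U)$; then $a\in\osec_{k_0}X_1$ and $u-a\in\osec_{k_0}X_1$, and therefore $u=a+(u-a)\in\osec_{k_0}X_1+\osec_{k_0}X_1=\osec_{2k_0}X_1$. This yields $\tfrac{2k_0}{h(v)}\,v\in\osec_{2k_0}X_1$, hence $\mrk_{X,h}(v)\le 2k_0<\infty$.

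The only genuine obstacle is the discrepancy between the open monic secant $\osec_{k_0}X_1$ and its closure $k_0H$: a priori $\osec_{k_0}X_1$ could be a proper subset of $k_0H$, yet the monic rank is defined using the open secant. Steps one through three are precisely what closes this gap — constructibility forces $\osec_{k_0}X_1$ to contain a dense open subset of $k_0H$, and then writing $u=a+(u-a)$ works because the constraint $u-a\in k_0H$ is automatic, turning the problem into the statement that two dense opens of an irreducible variety intersect. I expect no further technical difficulty; the bookkeeping with the scalars $\tfrac{k}{h(v)}$ and the cone property of $X$ is routine.
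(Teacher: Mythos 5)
Your proof is correct and follows essentially the same strategy as the paper's (which the paper attributes to Blekherman--Teitler): rescale $v$ to $u=\tfrac{2k_0}{h(v)}v\in 2k_0H$, observe that $\osec_{k_0}X_1$ and $u-\osec_{k_0}X_1$ both contain dense open subsets of the irreducible affine space $k_0H$, intersect them, and write $u$ as a sum of two points of $\osec_{k_0}X_1$. You merely spell out some bookkeeping the paper leaves implicit --- Chevalley's theorem for constructibility, the scalar check $h(u-a)=k_0$, and the cone-rescaling step for the left inequality.
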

\begin{proof}
We adopt a similar strategy as in the proof of \cite[Theorem 3]{Blekherman15}. 
Recall that the generic monic rank $k_0$ is finite by Proposition
\ref{prop:mongenexists}. Set 
$$
\tilde{v} = \frac{2k_0}{h(v)}\cdot v \in 2k_0H
$$
and consider the intersection
$$
\left(\tilde{v} - \osec_{k_0} X_1\right)\cap \osec_{k_0} X_1 \subseteq k_0 H. 
$$ 
Both sets on the left-hand side contain an open dense subset of $k_0 H$. Thus they must
intersect. Consequently, there exist $p_1,p_2\in \osec_{k_0}X_1$
such that $\tilde{v}-p_1 = p_2$. So 
$$
\frac{2k_0}{h(v)}\cdot v=\tilde{v} = p_1 + p_2\in \osec_{2k_0}X_1
$$ 
and hence $\mrk_{X,h}(v)\leq 2k_0$. This shows the second inequality. The first is immediate from
the definitions of $X$-rank and monic $X$-rank. 
\end{proof} 

One of the conditions of Theorem~\ref{thm:Basic} is that we require the closed cone $X$ to be irreducible. In the remainder of this section, we discuss the finiteness of the monic rank when this is not the case. So assume that $X^{(1)},\dots,X^{(s)}$ are the irreducible components of $X$. When some component $X^{(i)}$ does not intersect $H$, then it does not contribute to the monic rank, so we assume this is not the case. For each $i\in\{1,\dots,s\}$, we write
$$
H^{(i)}:=\span_{\KK}\left(X^{(i)}\right)\cap H=p^{(i)}+U^{(i)}
$$
with points $p^{(i)}\in H_i$ and subspaces $U^{(i)}\subseteq\ker(h)$. Furthermore, we take 
\begin{eqnarray*}
U&:=&U^{(1)}+\dots+U^{(s)}\subseteq\ker(h)\\
p&:=&(p^{(1)}+\dots+p^{(s)})/s\in H\\
\widetilde{H}&:=&\bigcup_{\substack{q_1,\dots,q_s\in\QQ_{\geq0}\\q_1+\dots+q_s=1}}q_1H^{(1)}+\dots+q_sH^{(s)}\subseteq H
\end{eqnarray*}
and note that $H=p+\ker(h)$. We now have the following theorem:

\begin{theorem}
The following are equivalent:
\begin{enumerate}
\item The set of monic $X$-ranks is bounded.
\item The monic $X$-rank of every element of $V\setminus\ker(h)$ is finite.
\item We have $H=\widetilde{H}$.
\item We have $U=\ker(h)$.
\end{enumerate}
Furthermore, when these equivalent conditions are satisfied, the maximal monic $X$-rank is at most $s$ times the product of the maximal monic $X^{(i)}$-ranks of elements in $H^{(i)}$.
\end{theorem}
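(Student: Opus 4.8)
The plan is to establish the four-term equivalence by running through the cycle $(1)\Rightarrow(2)\Rightarrow(3)\Rightarrow(4)\Rightarrow(1)$, and to read off the quantitative bound as a by-product of the last implication. The implication $(1)\Rightarrow(2)$ is immediate, and so is the inclusion $\widetilde{H}\subseteq H$: if $w_i\in H^{(i)}$ then $h(w_i)=1$, so $h(q_1w_1+\dots+q_sw_s)=q_1+\dots+q_s=1$ whenever $\sum_i q_i=1$. The two genuinely elementary implications are $(2)\Rightarrow(3)$ and $(4)\Rightarrow(1)$, and both rest on the same bookkeeping: a point of $\osec_k X_1$ is a sum $q_1+\dots+q_k$ with each $q_j\in X_1$, and each $q_j$ lies on some component $X^{(i_j)}$, hence in $X^{(i_j)}\cap H\subseteq H^{(i_j)}$; collecting the summands according to which component they lie on rewrites the (rescaled) point as a $\QQ_{\geq 0}$-affine combination of points of the $H^{(i)}$.

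Concretely, for $(2)\Rightarrow(3)$ I would take $v\in H$ with $k:=\mrk_{X,h}(v)<\infty$, write $kv=q_1+\dots+q_k$ with $q_j\in X_1$, let $k_i$ be the number of indices $j$ with $q_j\in X^{(i)}$, and for $k_i>0$ let $w_i\in H^{(i)}$ be the average (in the affine space $H^{(i)}$) of those $q_j$; then $v=\sum_i (k_i/k)w_i$ exhibits $v\in\widetilde{H}$, so $H\subseteq\widetilde{H}$, hence $H=\widetilde{H}$. For $(4)\Rightarrow(1)$, assuming $U=\ker(h)$ and $v\in H$, I would use $H=p+\ker(h)=p+U$ together with $U=U^{(1)}+\dots+U^{(s)}$ to write $v-p=u_1+\dots+u_s$ with $u_i\in U^{(i)}$, so that $v=\frac{1}{s}\sum_i w_i$ with $w_i:=p^{(i)}+s\,u_i\in H^{(i)}$. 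Applying Theorem~\ref{thm:Basic} to the irreducible cone $X^{(i)}$, which is non-degenerate inside its own linear span $V^{(i)}$ and on which $h$ restricts to a nonzero functional with hyperplane section exactly $H^{(i)}$, the monic $X^{(i)}$-rank is bounded on $H^{(i)}$, say by $r_i$. Putting $m_i:=\mrk_{X^{(i)},h}(w_i)\leq r_i$, so that $m_iw_i\in\osec_{m_i}X^{(i)}_1$, and $M:=m_1\cdots m_s$, one gets $Mw_i=(M/m_i)(m_iw_i)\in\osec_M X^{(i)}_1\subseteq\osec_M X_1$, whence $sMv=\sum_i Mw_i\in\osec_{sM}X_1$ and $\mrk_{X,h}(v)\leq sM\leq s\,r_1\cdots r_s$. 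This proves both the boundedness in $(1)$ and the asserted estimate on the maximal monic $X$-rank.

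The hard part is $(3)\Rightarrow(4)$, and I expect it to be the only place where a real argument, rather than bookkeeping, is needed. Assuming $\widetilde{H}=H$, consider the quotient map $\pi\colon V\to V/U$. Since $U^{(i)}\subseteq U$, each $\pi(H^{(i)})$ is the single point $\pi(p^{(i)})$, so $\pi(\widetilde{H})$ equals the set $C$ of $\QQ_{\geq 0}$-affine combinations of the finitely many points $\pi(p^{(1)}),\dots,\pi(p^{(s)})$, i.e.\ their ``rational convex hull''. On the other hand $\pi(H)=\pi(p)+\ker(h)/U$. The crucial observation is a dimension count over $\QQ$: the translate $C-\pi(p)$ lies in the $\QQ$-subspace $\span_{\QQ}\{\pi(p^{(i)})-\pi(p^{(1)})\}$ of $V/U$, which has finite $\QQ$-dimension, whereas if $\ker(h)/U$ were nonzero it would contain a line $\KK\bar b$ with $\bar b\neq 0$, and $\KK\bar b$ has infinite dimension over $\QQ$ because $\dim_\QQ\KK=\infty$ (as $\KK\supseteq\overline{\QQ}$). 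Thus $C=\pi(H)$ would put an infinite-$\QQ$-dimensional space inside a finite-$\QQ$-dimensional one, a contradiction; hence $\ker(h)/U=0$, i.e.\ $U=\ker(h)$, which is $(4)$ and closes the cycle. The final estimate on the maximal monic $X$-rank has already been obtained in the step $(4)\Rightarrow(1)$.
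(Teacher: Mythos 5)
Your proof is correct and follows essentially the same route as the paper: the same cycle $(1)\Rightarrow(2)\Rightarrow(3)\Rightarrow(4)\Rightarrow(1)$, the same $\QQ$-dimension-of-$\KK$ argument for $(3)\Rightarrow(4)$ (you phrase it via the quotient $V/U$, while the paper speaks of images of $p^{(i)}-p^{(s)}$ in $\ker(h)/U$, but it is the same idea), and the same componentwise application of Theorem~\ref{thm:Basic} in $(4)\Rightarrow(1)$ yielding the same bound $s\cdot r_1\cdots r_s$.
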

\begin{proof}
$(1)\Rightarrow (2)$. When the set of monic $X$-ranks is bounded, the monic $X$-rank of every element of $V\setminus\ker(h)$ is obviously finite.\\
$(2)\Rightarrow (3)$. Note that the monic $X$-rank of every element of $V\setminus\ker(h)$ is finite if and only if for every point $v\in H$ there exist $x_1,\dots,x_k\in X\cap H$ such that $v=(x_1+\dots+x_k)/k$. So, since $X\cap H\subseteq H^{(1)}\cup\dots\cup H^{(s)}$, we see that $H=\widetilde{H}$ when the monic $X$-rank of every element of $V\setminus\ker(h)$ is finite.\\
$(3)\Rightarrow(4)$. We have $\widetilde{H}\subseteq\span_{\QQ}\left(p^{(1)},\dots,p^{(s)}\right)+U$. If $H=\widetilde{H}$, then
$$
\ker(h)=H-p\subseteq \span_{\QQ}\left(p^{(1)},\dots,p^{(s)}\right)+U
$$
and hence the images of $p^{(1)}-p^{(s)},\dots,p^{(s-1)}-p^{(s)}$ in $\ker(h)/U$ span all of $\ker(h)/U$ over~$\QQ$. Since $\KK$ has infinite dimension over $\QQ$, this is not possible when $\ker(h)/U$ has positive dimension over~$\KK$. It follows that $\ker(h)=U$.\\
$(4)\Rightarrow(1)$. If $U=\ker(h)$, then 
$$
H=p+U=1/s\cdot H^{(1)}+\dots+1/s\cdot H^{(s)}\subseteq\widetilde{H}\subseteq H.
$$ 
Let $v\in H$ be a point. Then we can write $sv=v_1+\dots+v_s$ for some $v_i\in H^{(i)}$. Using Theorem~\ref{thm:Basic}, we see that for each $i\in\{1,\dots,s\}$ there are $p_{i,1},\dots,p_{i,k_i}\in X^{(i)}\cap H^{(i)}$ such that $k_iv_i=p_{i,1}+\dots+p_{i,k_i}$. Note here that $k_i$ is at most the maximal monic $X^{(i)}$-rank of elements in $H^{(i)}$. Now, we see that
$$
s k_1\dots k_s v=\sum_{i=1}^ss k_1\dots k_s/k_i\cdot (p_{i,1}+\dots+p_{i,k_i})
$$
is the sum of $s k_1\dots k_s$ elements of $X\cap H$. So the monic $X$-rank is bounded by $s$ times the product of the maximal monic $X^{(i)}$-ranks of elements in $H^{(i)}$.
\end{proof}

\section{Invariant theory tools}\label{sec:Invariant}

Our reference for classical invariant theory is \cite{Derksen02}.

\subsection{A variant of a theorem of Hilbert}
In this section, we develop computational tools to prove Theorem~\ref{thm:Shapiro}.
Let $G$ be a reductive algebraic group over $\KK$ acting on an affine variety $Y$, whose
coordinate ring is $R:=\KK[Y]$. Let a one-dimensional torus $\TT:= \KK\setminus\{0\}$ act on $Y$. The character lattice
of $\TT$ is isomorphic to $\mathbb Z$. For any $a \in \ZZ$, let $R_a$ be the corresponding 
weight space: 
$$
R_a:=\left\{f \in \KK[Y] ~\middle|~ \forall y \in Y, t\in \TT : f(ty)=t^a f(y) \right\}. 
$$
This naturally induces a grading on $R$. Assume the following: 
\begin{enumerate}
\item[(i)] The grading $R=\bigoplus_{a \in \ZZ} R_a$ satisfies
$R_0=\KK$ and $R_a=0$ for $a<0$.
\item[(ii)] The actions of $\TT$ and $G$ on $Y$ commute. 
\end{enumerate}
Under these assumptions, each weight space $R_a$ is a representation of $G$ and
the invariant ring $R^G$ decomposes as 
$$
R^G = \mbox{$\bigoplus_{a \geq 0} R_a^G$}
$$
where $R_0^G=R_0=\KK$. In this section, the terms {\it homogeneous} and {\it degree}
refer exclusively to the grading given by $\TT$. 

\begin{proposition} \label{prop:Hilbert}
Suppose that $r_1,\ldots,r_k\in R^G$ are homogeneous elements of
positive degree such that 
$$
\mathcal V(r_1,\ldots, r_k) = \mathcal V\left(\mbox{$\bigoplus_{a>0} R_a^G$}\right)
$$
where $\mathcal{V}(S)$ denotes the vanishing set of a set of forms $S\subseteq R$. 
Then $R^G$ is a finitely generated module over its subring $\KK[r_1,\ldots,r_k]$.
\end{proposition}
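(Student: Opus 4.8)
The plan is to reduce the statement to graded commutative algebra for the ring $A := R^G$. First recall that $A$ is a finitely generated $\KK$-algebra by Hilbert's finiteness theorem for reductive groups (see \cite{Derksen02}); since the $\TT$- and $G$-actions commute, $A$ inherits the $\TT$-grading $A = \bigoplus_{a \geq 0} A_a$ with $A_a = R_a^G$, and by assumption $A_0 = R_0 = \KK$. Write $A_+ := \bigoplus_{a > 0} A_a$ for the irrelevant ideal; since $A/A_+ \cong \KK$, this is a maximal, hence radical, ideal of $A$. In this language the proposition asks us to show: if $r_1,\ldots,r_k \in A_+$ are homogeneous with $\sqrt{(r_1,\ldots,r_k)A} = A_+$, then $A$ is a finitely generated module over $B := \KK[r_1,\ldots,r_k] \subseteq A$. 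So there are two things to do: (a) extract from the geometric hypothesis the equality of radical ideals $\sqrt{(r_1,\ldots,r_k)A} = A_+$; and (b) derive module-finiteness from it. Part (b) is the standard graded Noether-normalization argument, so the content is really (a), i.e.\ promoting a set-theoretic coincidence of zero loci in $Y$ to an equality of ideals inside the invariant ring.

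For (a): in $R = \KK[Y]$, which is finitely generated over the algebraically closed field $\KK$, Hilbert's Nullstellensatz together with the hypothesis $\mathcal V(r_1,\ldots,r_k) = \mathcal V\big(\mbox{$\bigoplus_{a>0}R_a^G$}\big)$ gives the equality of radical ideals
\[
\sqrt{(r_1,\ldots,r_k)R} \;=\; \sqrt{\big(\mbox{$\bigoplus_{a>0}R_a^G$}\big)R}
\]
in $R$. I would then intersect both sides with $A = R^G$. Because $G$ is reductive, the Reynolds operator $\rho\colon R \to R^G$ is $R^G$-linear and restricts to the identity on $R^G$; applying $\rho$ to an expression $\sum_i a_i f_i$ with $a_i \in J$ and $f_i \in R$ shows $JR \cap A = J$ for every ideal $J \subseteq A$, and hence $\sqrt{JR} \cap A = \sqrt{J}$. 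Using this with $J = (r_1,\ldots,r_k)A$ and with $J = A_+$ (note that $A_+ R = \big(\bigoplus_{a>0}R_a^G\big)R$ and $\sqrt{A_+} = A_+$) turns the displayed identity into $\sqrt{(r_1,\ldots,r_k)A} = A_+$. Alternatively, one can run this argument through the categorical quotient morphism $Y \to \Spec R^G$, which is surjective since $G$ is reductive.

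For (b): since $A$ is finitely generated graded, $A_+$ is a finitely generated ideal, so from $A_+ = \sqrt{(r_1,\ldots,r_k)A}$ one gets $A_+^N \subseteq (r_1,\ldots,r_k)A$ for some $N$, and therefore $A_a \subseteq (r_1,\ldots,r_k)A$ for all $a$ above a bound determined by $N$ and the degrees of a finite set of homogeneous algebra generators of $A$. Each $A_a$ is finite-dimensional over $\KK$, so the quotient $A/(r_1,\ldots,r_k)A$ is a finite-dimensional $\KK$-vector space; choose homogeneous $b_1,\ldots,b_M \in A$ with $b_1 = 1$ whose residues span it. A descending induction on degree then shows $A = B b_1 + \cdots + B b_M$: given a homogeneous $x \in A_a$, write $x - \sum_j c_j b_j \in (r_1,\ldots,r_k)A$ with $c_j \in \KK$ and the contributing $b_j$ of degree $a$, and take the degree-$a$ component to obtain $x = \sum_j c_j b_j + \sum_i r_i g_i$ with $g_i \in A_{a-\deg r_i}$; since $\deg r_i > 0$ the $g_i$ lie in strictly smaller degree, hence in $\sum_j B b_j$ by the inductive hypothesis, and $r_i \in B$. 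Thus $A$ is module-finite over $B$.

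I expect the main obstacle to be precisely the transfer in step (a): the hypothesis only records that two subsets of $Y$ coincide, and to upgrade it to an ideal-theoretic statement inside $R^G$ one genuinely needs the splitting $R = R^G \oplus \ker\rho$ coming from reductivity — without it the extension and contraction of ideals along $R^G \hookrightarrow R$ need not behave well. Once $\sqrt{(r_1,\ldots,r_k)A} = A_+$ is in hand, part (b) is the classical mechanism behind Hilbert's theorem on the null-cone and presents no difficulty.
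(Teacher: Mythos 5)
Your proof is correct and follows essentially the same route as the paper's: both apply the Nullstellensatz in $R=\KK[Y]$, use the Reynolds operator to transfer the resulting ideal membership into $R^G$, and then conclude module-finiteness over $\KK[r_1,\ldots,r_k]$ by a graded degree argument. The paper packages the last step concretely — choosing homogeneous algebra generators $f_1,\ldots,f_\ell$ of $R^G$, obtaining $f_j^m\in(r_1,\ldots,r_k)R^G$, and exhibiting $\{\prod_i f_i^{m_i}\mid 0\le m_i<m\}$ as a finite module-generating set — whereas you phrase it via $\sqrt{(r_1,\ldots,r_k)R^G}=R^G_+$ and the finite-dimensionality of $R^G/(r_1,\ldots,r_k)R^G$, which is the same degree-induction in more abstract clothing.
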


\begin{remark}
In Hilbert's classical variant of this result (see, e.g., \cite[Lemma 2.4.5]{Derksen02}), the variety $Y$ is a vector space,
$G$ acts linearly on $Y$ and the grading is the standard one. The proof of our generalization is identical, but we include it for the sake of completeness. 
\end{remark}

\begin{proof}
Let $f_1,\ldots,f_{\ell} \in R^G$ be homogeneous generators of $R^G$ of positive degree. Then by Hilbert's Nullstellensatz, there exists a positive integer $m$ such
that, for each $j\in\{1,\ldots,\ell\}$, the form $f_j^{m}$ is in the ideal generated by the homogeneous forms $r_1,\ldots,r_k$. 
Using the Reynolds operator, one finds that
\[ f_i^m=\sum_{i=1}^k h_i r_i \]
for some homogeneous $h_i\in R^G$ with $\deg(h_i) < m \deg(f_i)$. So we see that the finite set
$$
\left\{\prod_{i=1}^{\ell} f_i^{m_i}~\middle|~0\leq m_1,\dots,m_{\ell}< m\right\}
$$
generates $R^G$ as a $\KK[r_1,\ldots,r_k]$-submodule of $R$.
\end{proof}

\subsection{A criterion for closedness of the open monic secant variety}
Recall that we have a non-degenerate irreducible affine cone $X \subseteq V$ and a linear function $h \in V^*\setminus\{0\}$, the affine hyperplane $H = h^{-1}(1)$ and the affine-hyperplane section $X_1 = X\cap H$. Assume a one-dimensional torus $\TT$ acts linearly on $V$ such that the following conditions are satisfied:
\begin{itemize}
\item[(1)] The action $\TT \times V \to V$ extends to a morphism
$\KK \times V \to V$ (or equivalently, all weights of $V$ are nonnegative). 
\item[(2)] The space $(V^*)^{\TT}$ of $\TT$-invariant linear forms is spanned by $h$.
\item[(3)] The set $X$ is stable under $\TT$.
\end{itemize}
Let $h_0\!\!\!:=\!\!\!h, h_1, \ldots, h_m$ be a basis of $V^*$ consisting of
$\TT$-weight vectors (i.e., a basis of homogeneous vectors). 
Since $h$ is $\TT$-invariant, the sets $H$
and $X_1=X \cap H$ are stable under~$\TT$.
The  affine section $X_1$ contains the unique $\TT$-fixed
point $x_0 \in H$. The coordinates
of $x_0$ in the given basis of $V^{*}$ are $(h_0(x_0),h_1(x_0),\ldots,h_m(x_0))=(1,0,\ldots,0)$.\bigskip

We now fix a positive integer $k$, let $Y:=X_1^k$ be the $k$-fold direct product over $\KK$ of the affine
variety $X_1$ with itself and we consider the following map:
\begin{eqnarray*}
 \phi_k\colon Y &\to& kH,\\
 (p_1,\ldots,p_k) &\mapsto& p_1+\cdots+p_k.
\end{eqnarray*}
Besides the induced action of $\TT$, the affine variety $Y$ comes naturally equipped with an action of the symmetric
group $\mathfrak S_k$ which permutes the $k$ factors. Note that the actions of $\mathfrak S_k$ and $\TT$ on $Y$ commute by definition and that $\phi_k$ is $\TT$-equivariant and $\mathfrak{S}_k$-invariant.

\begin{proposition} \label{prop:Closed}
In the setting above, if $\phi_k^{-1}(kx_0)=\{(x_0,\dots,x_0)\}$, then 
$\osec_kX_1 \subseteq kH$ is closed and of dimension $k\cdot\dim X_1$.
\end{proposition}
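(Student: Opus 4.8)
The plan is to use the grading machinery of Section~\ref{sec:Invariant} together with Proposition~\ref{prop:Hilbert}. Consider the coordinate ring $R := \KK[Y] = \KK[X_1^k]$, on which both $G := \mathfrak{S}_k$ (acting reductively, as $\mathrm{char}\,\KK = 0$) and the torus $\TT$ act, with commuting actions. I would first check that the $\TT$-grading on $R$ satisfies the two hypotheses (i)--(ii) preceding Proposition~\ref{prop:Hilbert}: all weights of $V$ are nonnegative by condition~(1), so the same holds on $Y \subseteq V^k$ and hence $R = \bigoplus_{a \geq 0} R_a$; and $R_0 = \KK$ because the only $\TT$-fixed point of $X_1$ is $x_0$ (so the only $\TT$-fixed point of $Y$ is $(x_0,\dots,x_0)$, forcing degree-zero regular functions to be constant). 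The commutativity of the $\mathfrak S_k$- and $\TT$-actions was noted just before the statement.

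Next I would set up the comorphism. The map $\phi_k\colon Y \to kH$ is $\TT$-equivariant and $\mathfrak S_k$-invariant, so $\phi_k^*$ sends the coordinate functions $h_1,\dots,h_m$ on $kH$ (restricted appropriately, using $h_0 = h \equiv k$ on $kH$) to homogeneous elements $r_1,\dots,r_m \in R^{\mathfrak S_k}$ of positive degree. The key geometric input is the hypothesis $\phi_k^{-1}(kx_0) = \{(x_0,\dots,x_0)\}$: since $kx_0$ is the unique $\TT$-fixed point of $kH$ and is cut out there by the vanishing of $h_1,\dots,h_m$, this hypothesis says exactly that $\mathcal V(r_1,\dots,r_m)$ is the single point $(x_0,\dots,x_0)$ in $Y$. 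On the other hand, $(x_0,\dots,x_0)$ is the unique $\TT$-fixed point of $Y$, hence it is the unique common zero of all positive-degree homogeneous elements of $R$, and in particular $\mathcal V\bigl(\bigoplus_{a>0} R_a^{\mathfrak S_k}\bigr) = \{(x_0,\dots,x_0)\}$ as well. So the two vanishing sets coincide and Proposition~\ref{prop:Hilbert} applies: $R^{\mathfrak S_k}$ is a finite module over $\KK[r_1,\dots,r_m]$.

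From here the conclusion follows by standard arguments. Finiteness of $R^{\mathfrak S_k}$ over $\KK[r_1,\dots,r_m]$ means the morphism $Y /\!\!/ \mathfrak S_k \to \mathbb A^m$ induced by $r_1,\dots,r_m$ is finite, hence closed. But $\phi_k$ factors through $Y /\!\!/ \mathfrak S_k$ (as $\phi_k$ is $\mathfrak S_k$-invariant), and $\mathbb A^m$ is identified with the affine space $kH$ via $h_1,\dots,h_m$; since $Y \to Y/\!\!/\mathfrak S_k$ is surjective, the image $\phi_k(Y) = \osec_k X_1$ is the image of a finite morphism, hence Zariski-closed in $kH$. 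For the dimension statement: $\osec_k X_1 = \phi_k(Y)$ is closed and irreducible (image of the irreducible $Y = X_1^k$), so $\dim \osec_k X_1 = \dim Y - \dim(\text{general fibre}) \le \dim Y = k \cdot \dim X_1$; conversely, since $\phi_k$ is finite onto its image (its comorphism corresponds to the finite extension just established, after restricting scalars from $R^{\mathfrak S_k}$ to $R$, which is itself finite over $R^{\mathfrak S_k}$), all fibres are finite, giving $\dim \osec_k X_1 = k \cdot \dim X_1$.

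The main obstacle, and the step deserving the most care, is the translation of the set-theoretic hypothesis $\phi_k^{-1}(kx_0) = \{(x_0,\dots,x_0)\}$ into the scheme-theoretic statement $\mathcal V(r_1,\dots,r_m) = \{(x_0,\dots,x_0)\}$ that Proposition~\ref{prop:Hilbert} consumes — one must verify that the $r_i$'s genuinely have no common zero other than the fixed point, which is where the $\TT$-action does the work: a $\TT$-stable closed subset that contains no point other than $x_0$ in its closure-toward-the-fixed-point must, by the nonnegativity of weights, actually be $\{x_0\}$. Making this argument clean (e.g.\ via the limit $t \to 0$ of $t \cdot p$ for $p$ in the purported extra component, and the observation that the $r_i$ separate $kx_0$ from nothing else only after one knows the grading is connected with $R_0 = \KK$) is the crux; everything else is formal.
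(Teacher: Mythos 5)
Your proof follows essentially the same route as the paper's: factor $\phi_k$ through $Y/\!\!/\mathfrak S_k$, pull back the weight-vector coordinates $h_1,\dots,h_m$ to homogeneous $\mathfrak S_k$-invariants $r_1,\dots,r_m$, and apply Proposition~\ref{prop:Hilbert} to conclude finiteness, hence closedness and the dimension count. However, there is a logical slip in the step you yourself flag as the crux. You argue: $(x_0,\dots,x_0)$ is the unique $\TT$-fixed point of $Y$, so $\mathcal V\bigl(\bigoplus_{a>0}R_a\bigr)=\{(x_0,\dots,x_0)\}$, \emph{and in particular} $\mathcal V\bigl(\bigoplus_{a>0}R_a^{\mathfrak S_k}\bigr)=\{(x_0,\dots,x_0)\}$. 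This inference runs in the wrong direction: since $\bigoplus_{a>0}R_a^{\mathfrak S_k}\subseteq\bigoplus_{a>0}R_a$, one only gets $\mathcal V\bigl(\bigoplus_{a>0}R_a^{\mathfrak S_k}\bigr)\supseteq\mathcal V\bigl(\bigoplus_{a>0}R_a\bigr)$; passing to a subset of functions can only enlarge the common zero locus, so the uniqueness of the $\TT$-fixed point by itself does not control $\mathcal V\bigl(\bigoplus_{a>0}R_a^{\mathfrak S_k}\bigr)$. What actually closes the gap is an observation you have already set up but did not invoke: the functions $r_1,\dots,r_m$ are themselves elements of $\bigoplus_{a>0}R_a^{\mathfrak S_k}$, so
\[
\{(x_0,\dots,x_0)\}\subseteq\mathcal V\bigl(\mbox{$\bigoplus_{a>0}R_a^{\mathfrak S_k}$}\bigr)\subseteq\mathcal V(r_1,\dots,r_m)=\phi_k^{-1}(kx_0)=\{(x_0,\dots,x_0)\},
\]
and all these sets coincide. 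This is precisely the sandwich the paper uses, and it is the only place where the hypothesis $\phi_k^{-1}(kx_0)=\{(x_0,\dots,x_0)\}$ actually enters; your version, as written, attempts to deduce $\mathcal V\bigl(\bigoplus_{a>0}R_a^{\mathfrak S_k}\bigr)=\{(x_0,\dots,x_0)\}$ without using the hypothesis at all, which cannot work. With this one correction, the rest of your argument (finiteness of $R^{\mathfrak S_k}$ over $\KK[r_1,\dots,r_m]$, hence $\psi$ finite and closed, hence $\osec_kX_1$ closed of dimension $\dim Y/\!\!/\mathfrak S_k=k\cdot\dim X_1$) is sound and matches the paper.
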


\begin{proof}
Let $R:= \KK[Y]$ be the coordinate ring of $Y$. Define $Y/\!\!/ \mathfrak{S}_k$ to be the affine variety whose coordinate ring 
is the invariant ring $R^{\mathfrak{S}_k}$, i.e., $Y/\!\!/ \mathfrak S_k := \textnormal{Spec}(R^{\mathfrak S_k})$.
Since $\phi_k$ is $\mathfrak S_k$-invariant, it factors through the quotient map $\pi\colon Y\rightarrow Y/\!\!/ \mathfrak S_k$. So we have a morphism $\psi$ that makes the diagram
$$\begin{tikzcd}
Y\arrow["\pi"]{r} \arrow{rd}[swap]{\phi_k}& Y/\!\!/\mathfrak S_k \arrow["\psi"]{d}\\
& kH
\end{tikzcd}$$
commutative. Note that $\im(\phi_k) = \osec_k X_1$.\bigskip

A classical result \cite[Lemma 2.3.2]{Derksen02} in invariant theory is that the quotient
map~$\pi$ is surjective. We show that the vertical map $\psi$ is closed. Since $\phi_k$ is $\TT$-equivariant, the map $\psi$ is $\TT$-equivariant as well. Let $\psi^{*}\colon \KK[kH] \to R^{\mathfrak S_k}$ be the pull-back map induced by~$\psi$ and take $r_i = \psi^{*}(h_i)$ for each $i\in\{1,\dots,m\}$. By definition, we have
$$
r_i(y)=r_i(\pi(y))=h_i(\psi(\pi(y)))=h_i(\phi_k(y))
$$
for all $y\in Y$ for every $i\in\{1,\dots,m\}$. Here we use that the pull-back map induced by $\pi$ is the inclusion $R^{\mathfrak S_k}\subseteq R$. We see that
$$
\{ (x_0,\ldots,x_0) \}\subseteq\mathcal{V}\left(\mbox{$\bigoplus_{a>0} R_a^{\mathfrak S_k}$}\right)\subseteq \mathcal{V}(r_1,\dots,r_m)=\phi_k^{-1}(kx_0)=\{ (x_0,\ldots,x_0) \}.
$$
This means that all these subsets of $Y$ coincide. So by Proposition~\ref{prop:Hilbert}, the morphism $\psi$ is finite and hence closed. This implies that the image $\osec_k X_1$ of $\phi_k$ is closed. It also implies that the dimension of $\osec_k X_1$ coincides with the one of $Y/\!\!/\mathfrak S_k$, which is $k\cdot\dim X_1$. 
\end{proof}

\section{Instances of Shapiro's conjecture}\label{sec:Shapiro}

Fix positive integers $k\leq d$ and $e$. As a first application of the monic rank, we look at a conjecture due to B. Shapiro.

\begin{conj}[{Shapiro's Conjecture, \cite[Conjecture 1.4]{LORS}}]
Every binary form of degree $d\cdot e$ can be written as the sum of $d$ $d$-th powers of forms of degree $e$.
\end{conj}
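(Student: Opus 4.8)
The conjecture is, as far as we know, open in general; the plan is to attack it through the monic rank, reducing it to a concrete finiteness statement for the summation map of Proposition~\ref{prop:Closed}. Since $X=\{f^d\mid f\in\KK[x,y]_{(e)}\}$ is stable under $\GL_2$, the $X$-rank of a form $q\in V=\KK[x,y]_{(de)}$ is a $\GL_2$-invariant of $q$, so after applying a generic $g\in\GL_2$ we may assume $h(q)\neq 0$, where $h$ is the coefficient of $x^{de}$. It then suffices to prove that $\osec_d X_1=dH$, where $X_1=X\cap H$: this puts $\tfrac{d}{h(q)}\cdot q\in\osec_d X_1$, so $\mrk_{X,h}(q)\le d$ and hence (by Theorem~\ref{thm:Basic}) $\rk_X(q)\le d$, for every $q$. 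A direct check shows $X_1=\{g^d\mid g\in\KK[x,y]_{(e)},\ [x^e]g=1\}$, since $f^d\in H$ forces $[x^e]f$ to be a $d$-th root of unity and one may rescale $f$ by it; the map $g\mapsto g^d$ from the affine space $\{g\mid[x^e]g=1\}\cong\KK^e$ onto $X_1$ is bijective, so $X_1$ is irreducible of dimension $e$.

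To apply Proposition~\ref{prop:Closed} with $k=d$, let $\TT=\KK\setminus\{0\}$ act on $\KK[x,y]$ by $y\mapsto ty$, inducing on $V$ the grading in which $x^{de-i}y^i$ has weight $i$. All weights are nonnegative, $(V^*)^{\TT}=\span(h)$, and $X$ is $\TT$-stable, so conditions (1)--(3) hold, and the unique $\TT$-fixed point of $X_1$ is $x_0=x^{de}=(x^e)^d$. Proposition~\ref{prop:Closed} then gives: if $\phi_d^{-1}(dx_0)=\{(x_0,\dots,x_0)\}$, then $\osec_d X_1$ is closed of dimension $d\cdot\dim X_1=de=\dim dH$, and being irreducible and contained in the irreducible $dH$ it must equal $dH$, proving the conjecture. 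Thus everything reduces to the assertion
\[
g_1^d+\cdots+g_d^d=d\,x^{de},\quad g_i\in\KK[x,y]_{(e)},\quad [x^e]g_i=1\ \ \Longrightarrow\ \ g_i=x^e\ \text{ for all }i;
\]
dehomogenising at $x=1$: polynomials $g_1,\dots,g_d$ of degree $\le e$ with $g_i(0)=1$ and $\sum_i g_i(y)^d=d$ must all be the constant $1$.

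For $d=1$ this is trivial, and for $d=2$ it follows from unique factorisation in $\KK[x,y]$: from $g_1^2+g_2^2=(g_1+ig_2)(g_1-ig_2)=2x^{2e}$ each of $g_1\pm ig_2$ is a homogeneous degree-$e$ divisor of $x^{2e}$, hence a scalar multiple of $x^e$, so $g_1,g_2\in\KK\cdot x^e$, and the normalisation $[x^e]g_i=1$ forces $g_1=g_2=x^e$. For $d=3$ one can use the factorisation $g_1^3+g_2^3+g_3^3-3g_1g_2g_3=\prod_{j=0}^{2}(g_1+\omega^jg_2+\omega^{2j}g_3)$ with $\omega^3=1$: at the fixed point two of the three linear factors have vanishing $x^e$-coefficient, hence are divisible by $y$, forcing $y^2\mid(g_1g_2g_3-x^{de})$, and one then bootstraps along the $y$-adic filtration to kill the non-leading coefficients of all the $g_i$. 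For fixed small $e$ this bootstrapping is precisely the invariant-ring computation underlying Theorem~\ref{thm:Shapiro}: one exhibits generators of $\KK[X_1^d]^{\mathfrak S_d}$ and checks directly that the relevant nullcone is a single point.

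I expect the genuine obstacle to be a \emph{uniform} treatment of this fibre condition: it is a system of $de$ polynomial equations in the $de$ non-leading coefficients of $g_1,\dots,g_d$, elementary to write down and of the expected (zero) dimension, but it grows increasingly rigid with $e$ and $d$, and no argument valid for all $e$ --- let alone all $d$ --- is presently known, which is why the cases in Theorem~\ref{thm:Shapiro} are the only ones settled. Two fallbacks are worth keeping in mind: first, $\phi_d^{-1}(dx_0)$ being a single point is only a \emph{sufficient} condition in Proposition~\ref{prop:Closed}, so one could instead try to prove $\osec_d X_1=dH$ directly --- for instance by degenerating an explicit generic monic decomposition of a nearby form to reach each boundary point, or by bounding the dimension of \emph{every} fibre of $\phi_d$ and combining this with $\TT$-equivariance to conclude $\phi_d$ is closed; second, one can always run a case-by-case Gr\"obner-basis or invariant-ring computation as in Theorem~\ref{thm:Shapiro}. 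I would attempt the fibre computation first, treating the $y$-adic filtration inductively and hoping that the norm-form factorisations used above for $d\le 3$ (or Newton's identities for sums of $d$-th powers) can be leveraged uniformly.
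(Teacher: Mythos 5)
The statement you were assigned is an open conjecture; the paper does not prove it either (it only settles the cases listed in Theorem~\ref{thm:Shapiro}), and your proposal is appropriately candid about this. Your reduction---pass to a $\GL_2$-translate with $h(q)\neq 0$, reduce $\osec_d X_1=dH$ to the single-fibre condition $\phi_d^{-1}(dx_0)=\{(x_0,\dots,x_0)\}$ via Proposition~\ref{prop:Closed} with the torus acting by $y\mapsto ty$, then identify the closed irreducible $\osec_dX_1$ of dimension $de$ with $dH$---is exactly the paper's strategy (Conjecture~\ref{conj:MonicShapiro} and Proposition~\ref{conjimpliesshapiro}). The one place you genuinely diverge is $d=2$: the paper quotes \cite[Theorem 4]{FOS} to obtain a possibly non-monic decomposition $f=g_1^2+g_2^2$ and then repairs monicity via the symmetric-matrix statement (Proposition~\ref{prop:symmmonicrankmatrices}), whereas you verify the fibre condition directly by factoring $g_1^2+g_2^2=(g_1+ig_2)(g_1-ig_2)=2x^{2e}$ and using unique factorisation; your route is shorter, self-contained, and sits more naturally inside the general framework, and it is correct. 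For $d=3$ your identity $\sum_i g_i^3-3\prod_i g_i=\prod_{j}(g_1+\omega^jg_2+\omega^{2j}g_3)$ does yield $y^2\mid(g_1g_2g_3-x^{3e})$, but the subsequent ``bootstrap along the $y$-adic filtration'' is a hope rather than an argument---the paper settles $(3,e)$ for $e\le4$ and $(4,2)$ only by Gr\"obner-basis computations modulo a prime using the inductive set-up of Remark~\ref{rmk: inductive strategy}, and no uniform argument is known. In short: no gap beyond the one everyone has (the conjecture is open), your identification of the bottleneck---a uniform treatment of the fibre condition---matches the paper's, and your $d\le2$ cases are complete.
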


We settle this conjecture in some new cases by proving the following (stronger) conjecture: let $V = \KK[x,y]_{(de)}$ be the vector space of binary forms of degree $d\cdot e$, let 
$$
X = \left\{f^d ~\middle|~ f \in \KK[x,y]_{(e)}\right\}
$$
be the variety of $d$-th powers of forms of degree $e$, let $H$ be the affine hyperplane consisting of all forms whose coefficient at $x^{de}$ equals $1$ and take $X_1:=X \cap H$.

\begin{conj} \label{conj:MonicShapiro}
The addition map
\begin{eqnarray*}
\phi_k\colon X_1^k &\to& kH\\
(p_1,\dots,p_k)&\mapsto&p_1+\dots+p_k
\end{eqnarray*}
satisfies $\phi_k^{-1}(kx^{de})=\{(x^{de},\ldots,x^{de})\}$.
\end{conj}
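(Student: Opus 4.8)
Since Conjecture~\ref{conj:MonicShapiro} is open in general, I outline how to prove it in the cases listed in Theorem~\ref{thm:Shapiro}. Begin with two reductions. A point of $X_1$ has the form $f^d$ for some $f\in\KK[x,y]_{(e)}$ whose coefficient at $x^e$ is a $d$-th root of unity; absorbing that root of unity into $f$, we may take $f$ monic without changing $f^d$. Next, it suffices to treat $k=d$: given a tuple in $\phi_k^{-1}(kx^{de})$, appending $d-k$ copies of $x^{de}=(x^e)^d\in X_1$ produces a tuple in $\phi_d^{-1}(dx^{de})$, so knowing $\phi_d^{-1}(dx^{de})=\{(x^{de},\dots,x^{de})\}$ yields the conclusion for every $k\le d$. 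Dehomogenizing at $y=1$ reduces us to the following statement: if $q_1,\dots,q_d\in\KK[x]$ are monic of degree $e$ and $q_1^d+\cdots+q_d^d=d\,x^{de}$, then $q_i=x^e$ for all $i$.

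For $e=1$ this follows from Newton's identities: writing $q_i=x-a_i$ and matching the coefficients of $x^{d-1},\dots,x^0$ forces the power sums $\sum_i a_i^j$ to vanish for $j=1,\dots,d$, whence all elementary symmetric functions of $a_1,\dots,a_d$ vanish and $a_1=\cdots=a_d=0$. The case $d=1$ is trivial. For $d=2$ one factors $q_1^2+q_2^2=(q_1+\sqrt{-1}\,q_2)(q_1-\sqrt{-1}\,q_2)=2x^{2e}$; unique factorization in $\KK[x]$ forces both factors to be scalar multiples of powers of $x$, and imposing that $q_1$ and $q_2$ be monic of degree $e$ then pins them to $q_1=q_2=x^e$.

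For the remaining cases $(d,e)\in\{(3,2),(3,3),(3,4),(4,2)\}$, I would compute the fibre by brute force. Parametrize $f_\ell=x^e+\sum_{a=1}^{e}c_{\ell,a}x^{e-a}y^a$, so that $f_1^d+\cdots+f_d^d=d\,x^{de}$ becomes a system of $de$ polynomial equations in the $de$ unknowns $c_{\ell,a}$. The torus $\TT$ scaling $y$ grades this system: $c_{\ell,a}$ has weight $a$, and the equation expressing the vanishing of the coefficient of $x^{de-i}y^i$ is homogeneous of weight $i$, so the system can be solved weight by weight, while the $\mathfrak S_d$-symmetry permuting the factors further reduces the work. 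A Gröbner basis (or elimination) computation then shows that its only solution is $c_{\ell,a}=0$ for all $\ell,a$, i.e.\ that $\phi_d^{-1}(dx^{de})$ is the single point $(x^{de},\dots,x^{de})$.

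Once $\phi_d^{-1}(dx^{de})$ is a single point, Proposition~\ref{prop:Closed} applies with the torus scaling $y$ (its three hypotheses hold, with $\TT$-fixed point $x_0=x^{de}$) and shows that $\osec_d X_1$ is closed of dimension $d\cdot\dim X_1$. Since $f\mapsto f^d$ maps $\KK[x,y]_{(e)}$ onto $X$ with finite fibres, $\dim X=e+1$ and hence $\dim X_1=e$, so $\dim\osec_d X_1=de=\dim V-1=\dim H$; being irreducible, $\osec_d X_1$ must equal $dH$, and therefore every $v\in V\setminus\ker(h)$ satisfies $\mrk_{X,h}(v)\le d$, which is Theorem~\ref{thm:Shapiro}. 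The main obstacle is the general statement: there is no analogue of the $d=2$ factorization for $d\ge 3$, Newton's identities are confined to $e=1$, and although the $\TT$-grading neatly organizes the defining equations, the higher-weight relations — and the Gröbner bases needed to solve them — grow rapidly with $d$ and $e$, so a uniform conceptual proof remains elusive.
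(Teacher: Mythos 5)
Conjecture~\ref{conj:MonicShapiro} is open; the paper establishes only the cases used in Theorem~\ref{thm:Shapiro}, and your outline targets the same cases. The scaffolding---reduce to $k=d$, check the single-fibre condition, then use Proposition~\ref{prop:Closed} and the dimension count $\dim\osec_d X_1 = de = \dim H$---is exactly the paper's Proposition~\ref{conjimpliesshapiro}. For $e=1$, Newton's identities and the paper's observation that the power sums generate $\KK[a_1,\ldots,a_d]^{\mathfrak S_d}$ are the same argument. Where you genuinely diverge is $d=2$: factoring $q_1^2+q_2^2=(q_1+\sqrt{-1}\,q_2)(q_1-\sqrt{-1}\,q_2)=2x^{2e}$ over the UFD $\KK[x]$ and matching degrees and leading coefficients forces $q_1=q_2=x^e$, so you actually prove Conjecture~\ref{conj:MonicShapiro} for $(2,2,e)$. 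The paper never does this---its $d=2$ argument bypasses the conjecture, writing $f=g_1^2+g_2^2$ via \cite{FOS} and transferring monicity through the symmetric-matrix result Proposition~\ref{prop:symmmonicrankmatrices}, which gives the rank bound but not the fibre statement. Your route is more elementary and strictly stronger, as it also yields closedness of $\osec_2X_1$ via Proposition~\ref{prop:Closed}. For cases (iii)--(iv) your sketch is fine in spirit but omits the two devices the paper needs to make the Gr\"obner computation terminate: the normalization $c_{1,e+1}=1$ from the induction in Remark~\ref{rmk: inductive strategy}, and the reduction modulo a prime (legitimate because the system defines a closed subscheme of weighted projective space proper over $\ZZ$, so emptiness over $\overline{\FF_p}$ gives emptiness over $\KK$); without these, elimination over $\QQ$ for $(3,3)$, $(3,4)$, $(4,2)$ is unlikely to finish.
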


\begin{proposition}\label{conjimpliesshapiro}
If Conjecture \ref{conj:MonicShapiro} holds for $(k,d,e)$, then $\osec_k X_1$ is closed
and of dimension~$k\cdot e$. In particular, Conjecture \ref{conj:MonicShapiro} for $(d,d,e)$ implies Shapiro's Conjecture for~$(d,e)$.
\end{proposition}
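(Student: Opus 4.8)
The plan is to recognize the first assertion as a direct instance of Proposition~\ref{prop:Closed}, and then to upgrade the resulting identity $\osec_d X_1 = dH$ to Shapiro's Conjecture by a rescaling together with the $\GL_2$-symmetry of $X$. To fit $X = \{f^d \mid f \in \KK[x,y]_{(e)}\}$ into the hypotheses of Proposition~\ref{prop:Closed}, I would let $\TT$ act on $\KK[x,y]$ by $t\cdot x = x$ and $t\cdot y = ty$; on $V = \KK[x,y]_{(de)}$ this is a linear action in which the monomial $x^{de-j}y^j$ has weight $j \ge 0$. Thus all weights of $V$ are nonnegative (condition~(1)), the $x^{de}$-coefficient functional $h$ spans $(V^*)^{\TT}$ (condition~(2)), and $X$ is $\TT$-stable because $t\cdot(f^d) = (t\cdot f)^d$ (condition~(3)); moreover the unique $\TT$-fixed point of $H$ is $x^{de} = (x^e)^d \in X_1$, so $x_0 = x^{de}$ in the notation of Section~\ref{sec:Invariant}. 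I would also record the standing assumptions on $X$: it is irreducible since $f\mapsto f^d$ parametrizes it from the affine space $\KK[x,y]_{(e)}$; it is non-degenerate because it contains every $de$-th power $\ell^{de} = (\ell^e)^d$ of a linear form $\ell$, and such powers span $V$; and it is Zariski-closed, being the affine cone over the image of the proper morphism $\PP(\KK[x,y]_{(e)}) \to \PP(V)$, $[f] \mapsto [f^d]$.

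With these identifications, the hypothesis $\phi_k^{-1}(kx_0) = \{(x_0,\dots,x_0)\}$ of Proposition~\ref{prop:Closed} is exactly Conjecture~\ref{conj:MonicShapiro} for $(k,d,e)$, so that proposition yields that $\osec_k X_1$ is closed of dimension $k\cdot\dim X_1$. It then remains to compute $\dim X_1 = e$: comparing $x^{de}$-coefficients shows $X_1 = \{g^d \mid g\in\KK[x,y]_{(e)} \text{ with } x^e\text{-coefficient } 1\}$, and $g\mapsto g^d$ parametrizes $X_1$ injectively by the $e$-dimensional affine space of such $g$ (so $X_1$, hence $X_1^k$, is irreducible). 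Therefore $\osec_k X_1$ is closed of dimension $k\cdot e$, which is the first assertion.

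For the second assertion, take $k = d$. Then $\osec_d X_1$ is closed, irreducible (it is the image $\phi_d(X_1^d)$), and of dimension $de = \dim V - 1 = \dim dH$; since $dH$ is irreducible, this forces $\osec_d X_1 = dH$. In other words, every $q\in V$ with $h(q) = d$ is a sum of $d$ elements of $X_1 \subseteq X$. Now let $q\in V$ be arbitrary. If $q = 0$ it is trivially a sum of $d$-th powers. If $c := h(q)\ne 0$, then $\tfrac{d}{c} q\in dH = \osec_d X_1$, so $\tfrac{d}{c} q = \sum_{i=1}^{d} g_i^d$ with $g_i\in\KK[x,y]_{(e)}$, and choosing a $d$-th root $\lambda\in\KK$ of $c/d$ gives $q = \sum_{i=1}^{d} (\lambda g_i)^d$. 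Finally, if $q\ne 0$ and $h(q) = 0$, a generic $\gamma\in\GL_2$ makes the $x^{de}$-coefficient of $\gamma\cdot q$ nonzero (a nonzero form does not vanish identically on $\KK^2$), so the previous case applies to $\gamma\cdot q = \sum_i f_i^d$, whence $q = \sum_i (\gamma^{-1}\cdot f_i)^d$, since the $\GL_2$-action commutes with multiplication of forms. This is Shapiro's Conjecture for $(d,e)$.

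The proof is short because Proposition~\ref{prop:Closed} does the real work. The only steps that are not pure bookkeeping are the (standard) verification that $X$ is a closed, irreducible, non-degenerate cone, which is what makes Proposition~\ref{prop:Closed} applicable, and the reduction in the last step from forms with nonvanishing $x^{de}$-coefficient to all forms, which uses the $\GL_2$-equivariance of $X$; I expect this reduction to be the one place where a genuinely new, if routine, idea is required.
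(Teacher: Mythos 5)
Your proof is correct and follows essentially the same route as the paper: you introduce the same torus action $t\cdot y = ty$ to put yourself in the setting of Proposition~\ref{prop:Closed}, apply it with the hypothesis of Conjecture~\ref{conj:MonicShapiro} to conclude that $\osec_k X_1$ is closed of dimension $ke$, and then deduce Shapiro's Conjecture for $(d,e)$ from $\osec_d X_1 = dH$ together with the $\GL_2$-equivariance of $X$. The only difference is cosmetic: the paper compresses the second half into the one-line chain $\rk_X(v)=\rk_X(\tilde{v})\leq\mrk_{X,h}(\tilde{v})\leq d$, whereas you unpack the same content (the dimension-forcing equality $\osec_d X_1 = dH$, the rescaling by a $d$-th root, and the reduction to forms with nonzero $x^{de}$-coefficient) explicitly; you also spell out the verification that $X$ is a closed, irreducible, non-degenerate cone, which the paper leaves implicit.
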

\begin{proof}
Let $\TT = \KK\setminus\{0\}$ act on $V$ via 
$$
t \cdot \left(\sum_{i=0}^{de} a_i x^{de-i} y^i\right) = \sum_{i=0}^{de} a_i t^i x^{de-i}y^i
$$
for all $t\in\TT$. This action has only positive weights and stabilizes $X$. 
Furthermore, the only $\TT$-invariant in $V^{*}$ is the linear function $h$ which selects the coefficient of $x^{de}$. Hence, the assumptions of Proposition~\ref{prop:Closed} are satisfied. This
implies the first statement. For the second statement, note
that every non-zero form $v\in V$ has in its $\GL_2$-orbit a
form $\tilde{v}$ with $h(\tilde{v}) \neq 0$. Assuming 
Conjecture \ref{conj:MonicShapiro}, we derive
$$
\rk_X(v)=\rk_X(\tilde{v})\leq \mrk_{X,h}(\tilde{v}) \leq d
$$
as desired.
\end{proof}

\begin{remark}\label{rmk: inductive strategy}
Let $(k,d,e)$ be a triple such that Conjecture \ref{conj:MonicShapiro} holds for $(k,d,e+1)$ and let $f_1,\dots,f_k\in\KK[x,y]_{(e)}$ be monic binary forms with $f_1^d+\dots+f_k^d=kx^{de}$. Then we have
$$
kx^{d(e+1)}=(xf_1)^d+\dots+(xf_k)^d
$$
and hence $xf_1=\dots=xf_k=x^{e+1}$ by the conjecture for
$(k,d,e+1)$. So we see that the conjecture for $(k,d,e+1)$
implies the conjecture for $(k,d,e)$. Conversely, assuming that
the conjecture holds for $(k,d,e)$, we get the following method to prove that the conjecture also holds for $(k,d,e+1)$: we have to prove that monic binary forms $f_1,\dots,f_k\in\KK[x,y]_{(e+1)}$ can only satisfy
$$
f_1^d+\dots+f_k^d=kx^{d(e+1)}
$$
when $f_1=\dots=f_k=x^{e+1}$. Suppose we have binary forms
$$
f_i=x^{e+1} + c_{i,1}x^ey + \cdots + c_{i,e+1} y^{e+1}\in\KK[x,y]_{(e+1)}
$$
satisfying the equation. If $c_{i,e+1}=0$ for each $i$, then we get
$$
\left(\frac{f_1}{x}\right)^d+\dots+\left(\frac{f_1}{x}\right)^d=kx^{de}
$$
and hence $f_1/x=\dots=f_k/x=x^e$ by the conjecture for $(k,d,e)$. Otherwise we can assume, by permuting the $f_i$ and acting with $\TT$, that $c_{1,e+1}=1$. Now, we expand the sum
$$
f_1^d+\cdots+f_k^d=kx^{d(e+1)} + r_1 x^{d(e+1)-1}y + \cdots + r_{d(e+1)}y^{d(e+1)}
$$
where the coefficients $r_{\ell}$ are polynomials in the $c_{i,j}$ and we compute the reduced Gr\"obner basis (with respect to some monomial ordering) of the ideal generated by the $r_{\ell}$ and the polynomial $c_{1,e+1}-1$. If the conjecture holds for $(k,d,e+1)$, then this Gr\"obner basis will be $\{1\}$. And, if the Gr\"obner basis is $\{1\}$, then $c_{1,e+1}$ cannot be $1$, which means that $f_1=\dots=f_k=x^{e+1}$ and hence that the conjecture holds for $(k,d,e+1)$.
\end{remark}

We can now prove Theorem~\ref{thm:Shapiro}.

\begin{proof}[Proof of Theorem~\ref{thm:Shapiro}]
We consider the cases of the theorem separately.

\begin{itemize}
\item[(i)] The coefficients of the binary form
$$
(x+a_1y)^d+\dots+(x+a_dy)^d
$$
at $x^{d-1}y,\dots,y^{d}$ are, up to constant factors, power sums in the variables $a_1,\dots,a_d$. These generate the invariant ring $\KK[a_1,\dots,a_d]^{\mathfrak{S}_d}$. This implies that 
$$
\mathcal V(r_1,\ldots,r_d) = \{(0,\ldots,0)\}
$$
and hence Conjecture \ref{conj:MonicShapiro} holds for $(d,d,1)$. So (i) holds by Proposition \ref{conjimpliesshapiro}.

\item[(ii)] Both Conjecture 2 and Shapiro's Conjecture are trivial for $d=1$. Assume that $d=2$ and let $f \in \KK[x,y]_{(2e)}$ be a binary form whose coefficient at $x^{2e}$ equals $2$. Then $f = g_1^2 + g_2^2$ for some (not necessarily monic) binary forms $g_1,g_2\in\KK[x,y]_{(e)}$ by \cite[Theorem 4]{FOS}. Fix $x^e,x^{e-1}y,\dots,y^e$ as the basis of $\KK[x,y]_{(e)}$ and consider $g_1\cdot g_1+g_2\cdot g_2\in\Sym^2(\KK[x,y]_{(e)})$ as a symmetric matrix. The linear map
\begin{eqnarray*}
\pi\colon \Sym^2(\KK[x,y]_{(e)})&\to&\KK[x,y]_{(2e)}\\
x^{e-i}y^i\cdot x^{e-j}y^j&\mapsto&x^{2e-(i+j)}y^{i+j}
\end{eqnarray*}
sends $g_1\cdot g_1+g_2\cdot g_2$ to $f$. From this we see
that the matrix $g_1\cdot g_1+g_2\cdot g_2$ has a $2$ as entry
in its top-left corner. Now it follows from Proposition \ref{prop:symmmonicrankmatrices} that
$$
g_1\cdot g_1+g_2\cdot g_2=h_1\cdot h_1+h_2\cdot h_2\in \Sym^2(\KK[x,y]_{(e)})
$$
for some monic binary forms $h_1,h_2\in\KK[x,y]_{(e)}$. So we have
$$
f=\pi(h_1\cdot h_1+h_2\cdot h_2)=h_1^2+h_2^2
$$
as desired.

\newcommand{\stackedlabel}{\raisebox{-13pt}[0pt][0pt]{\begin{tabular}{c}(iii)\\+\\(iv)\end{tabular}\!\!}}
\item[\stackedlabel] The remaining cases are checked by computer, but we use one more observation: the system of homogeneous equations in the $c_{i,j}$ constructed in the inductive strategy given in Remark \ref{rmk: inductive strategy} has only integral coefficients and is homogeneous relative to
the grading coming from the action of one-dimensional torus $\TT$. Hence, we are
checking whether a certain subvariety of a weighted projective space defined over~$\ZZ$ has no $\KK$-points. To achieve this, it is enough to 
show that the subvariety has no $\overline{\FF_p}$-points for
some prime $p$. This allows us to work modulo some prime (e.g., the prime $p=101$ is enough), which makes
the computation more efficient and lets it finish successfully. \qedhere
\end{itemize}
\end{proof}

\section{Minimal orbits}\label{sec:Minorbit}

Let $G$ be a connected and reductive algebraic group over $\KK$. Let $V$
be an irreducible rational representation of $G$. Fix a Borel subgroup $B$ of
$G$ and a maximal torus $\TT$ in~$B$. Let $v \in V$ span the unique
$B$-stable one-dimensional subspace in $V$, i.e., the highest weight space. 
Set $X:=G\cdot v \cup \{0\}$. This is the affine cone over the homogeneous variety given by the orbit of $v$. 
Let $h \in V^*$ be the function  that spans the unique $B$-stable one-dimensional subspace
of $V^*$ and is normalized so that $h(v)=1$. In this setting, we study
Question~\ref{que:Minorbit}, i.e., whether the maximal $X$-rank of a vector
in $V$ is also the maximal monic $X$-rank of a vector in $V \setminus\ker(h)$.
As positive evidence, we treat the examples from Theorem \ref{thm:Minorbit}.

\subsection{Binary forms}
Consider the case where
\begin{itemize}
	\item the vector space $V = \KK[x,y]_{(d)} \cong \Sym^d(\KK^2)$ consists of binary forms of degree $d$;
	\item the group $G = \GL_2$ acts on $V$ in the natural way;
	\item the variety $X = \{\ell^d \mid \ell\in \KK[x,y]_{(1)}\}$ consists of powers of linear forms; and
	\item the linear function $h \in V^*\setminus\{0\}$ sends a polynomial to its coefficient at $x^d$.
\end{itemize}
Here the $X$-rank is also called the {\it Waring rank}. Using
the Apolarity Lemma (see, e.g., \cite[Lemma 1.15]{IarrKan:PowerSumsBook}), one can show that $x^{d-1}y$
has Waring rank $d$ and, in fact, the maximal Waring rank of
a binary form of degree $d$ is exactly $d$ (see, e.g.,
\cite[Theorem 4.9]{RezLength}). By
Theorem~\ref{thm:Shapiro}(i), the maximal monic rank with
respect to $h$ equals $d$ as well.  Hence, the answer to
Question~\ref{que:Minorbit} is affirmative is this instance. Moreover, all
open secant varieties of~$X_1$ are closed by
Proposition~\ref{prop:Closed}---the coefficients of
$x^{d-1}y,\dots,x^{d-k}y^k$ in the sum of $k$ $k$-th powers $(x+c_i)^d$ of linear forms
are the first $k$ power sums in $c_1,\dots,c_k$ and generate the
invariant ring $\CC[c_1,\ldots,c_k]^{\mathfrak S_k}$.

\subsection{Rectangular matrices}
Consider the case where
\begin{itemize}
	\item the vector space $V = \KK^{m\times n}$ consists of $m\times n$ matrices;
	\item the group $G = \GL_m \times\GL_n$ acts by left  and right multiplication;
	\item the variety $X = \{A\in\KK^{m\times n}\mid\rk(A)\leq 1\}$ consists of rank $\leq1$ matrices; and
	\item the linear function $h \in V^*\setminus\{0\}$
	sends a matrix to its top-left entry.
\end{itemize}
Let $H \subseteq V$ be the affine space of matrices $A$ with $h(A)=1$ and take $X_1=X\cap H$.

\begin{proposition}\label{prop:monicrankmatrices}
We have 
$$
\osec_kX_1=\sigma_kX_1=\{A\in kH\mid\rk(A)\leq k\}
$$
for all $1\leq k\leq\min(m,n)$.
\end{proposition}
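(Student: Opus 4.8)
The plan is to prove the chain of inclusions
\[
\{A\in kH\mid\rk(A)\le k\}\ \subseteq\ \osec_kX_1\ \subseteq\ \sigma_kX_1\ \subseteq\ \{A\in kH\mid\rk(A)\le k\},
\]
which, the outer sets being equal, pins down all three. For the ``easy'' end, note first that $kH=\{A\in\KK^{m\times n}\mid A_{11}=k\}$, so $\{A\in kH\mid\rk(A)\le k\}$ is Zariski closed, being cut out of the affine space $kH$ by the $(k+1)\times(k+1)$ minors. Every element of $X_1$ is a rank-one matrix with $(1,1)$-entry $1$, so a sum of $k$ of them lies in $kH$ and has rank at most $k$; hence $\osec_kX_1$ sits inside that closed set, and so does its closure $\sigma_kX_1$. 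Everything then reduces to the reverse inclusion $\{A\in kH\mid\rk(A)\le k\}\subseteq\osec_kX_1$, which also immediately forces all three sets to coincide (and makes closedness of $\osec_kX_1$ a byproduct rather than an appeal to Proposition~\ref{prop:Closed}).

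For this inclusion I would parametrize $X_1$ explicitly: a rank-one matrix with $(1,1)$-entry $1$ is precisely a matrix of the form $\binom{1}{x}\binom{1}{y}^{T}$ with $x\in\KK^{m-1}$ and $y\in\KK^{n-1}$, so that
\[
\binom{1}{x}\binom{1}{y}^{T}=\begin{pmatrix}1 & y^{T}\\ x & xy^{T}\end{pmatrix}.
\]
Given a target $A=\begin{pmatrix}k & b^{T}\\ c & D\end{pmatrix}\in kH$ with $\rk(A)\le k$, the Schur-complement rank identity (valid since the pivot block $[k]$ is invertible in characteristic zero) gives $\rk(A)=1+\rk\bigl(D-\tfrac1k cb^{T}\bigr)$, so $D'':=D-\tfrac1k cb^{T}$ has rank at most $k-1$; write $D''=\sum_{j=1}^{k-1}p^{(j)}(q^{(j)})^{T}$ with $p^{(j)}\in\KK^{m-1}$, $q^{(j)}\in\KK^{n-1}$, padding with zeros if necessary. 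Expanding $\sum_{i=1}^{k}\binom{1}{x^{(i)}}\binom{1}{y^{(i)}}^{T}$, writing $A$ as a sum of $k$ elements of $X_1$ becomes equivalent to producing vectors $x^{(1)},\dots,x^{(k)}$ and $y^{(1)},\dots,y^{(k)}$ with $\sum_i x^{(i)}=c$, $\sum_i y^{(i)}=b$, and $\sum_i x^{(i)}(y^{(i)})^{T}=D$.

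The crux — and the one place where an idea beyond bookkeeping is needed — is to build these vectors by a discrete-Fourier-type ansatz that kills the cross terms. Fixing a primitive $k$-th root of unity $\omega\in\KK$, I would set
\[
x^{(i)}:=\tfrac1k c+\sum_{j=1}^{k-1}\omega^{ij}p^{(j)},\qquad y^{(i)}:=\tfrac1k b+\tfrac1k\sum_{j=1}^{k-1}\omega^{-ij}q^{(j)},
\]
and verify, using the orthogonality relation $\sum_{i=1}^{k}\omega^{i\ell}=k$ when $k\mid\ell$ and $0$ otherwise (applied with $\ell=j$ and with $\ell=j-l$, where $|j-l|\le k-2$ forces $k\mid(j-l)$ only for $j=l$), that $\sum_i x^{(i)}=c$, $\sum_i y^{(i)}=b$, and $\sum_i x^{(i)}(y^{(i)})^{T}=\tfrac1k cb^{T}+D''=D$. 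This exhibits $A$ in $\osec_kX_1$ and finishes the proof. The main obstacle is genuinely just finding this ansatz; once it is written down the verification is a routine orthogonality computation. (A slightly longer alternative would first use the subgroup of $\GL_m\times\GL_n$ fixing $h$ to bring $A$ into the normal form $\mathrm{diag}(k,I_r,0)$ with $r=\rk(A)-1\le k-1$ and then decompose that normal form; the hypothesis $k\le\min(m,n)$ enters only in guaranteeing $r\le k-1$ and that the rank condition is a genuine constraint, and the direct construction above seems cleanest.)
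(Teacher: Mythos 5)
Your proof is correct and takes a genuinely different route from the paper's. The paper argues by induction on $k$: it first invokes the subgroup of $\GL_m\times\GL_n$ fixing $h$ to normalize $A$ to the diagonal matrix $\mathrm{diag}(k,1,\dots,1,0,\dots,0)$, and then peels off a single element of $X_1$ via the explicit $2\times2$ identity
$\left(\begin{smallmatrix}k&0\\0&1\end{smallmatrix}\right)=\left(\begin{smallmatrix}k-1&\lambda\\\lambda&1-\lambda^2\end{smallmatrix}\right)+\left(\begin{smallmatrix}1&-\lambda\\-\lambda&\lambda^2\end{smallmatrix}\right)$ with $\lambda^2=(k-1)/k$, reducing to the case $k-1$. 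Your argument instead bypasses both the normal form and the induction: you use the Schur complement with respect to the invertible $(1,1)$-block to see that $D''=D-\tfrac1k cb^T$ has rank $\le k-1$, decompose $D''$ into $k-1$ rank-one pieces, and then build all $k$ summands simultaneously via the roots-of-unity ansatz, with orthogonality of characters killing the unwanted cross terms. What your approach buys is a completely explicit, one-shot decomposition that does not require checking that the $h$-stabilizer acts transitively enough to diagonalize $A$ inside $kH$ (a step the paper asserts without spelling out); what the paper's approach buys is a shorter write-up and a single $2\times2$ identity that is reused verbatim in the symmetric-matrix case (Proposition~\ref{prop:symmmonicrankmatrices}). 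Your construction does not specialize as directly to the symmetric case, since the Fourier ansatz as written produces $x^{(i)}\neq y^{(i)}$ even when $c=b$ and $D=D^T$; one would have to symmetrize it. For the proposition at hand, however, your argument is complete and correct, and the hypothesis $k\le\min(m,n)$ enters exactly as you say, to guarantee $D''$ (an $(m-1)\times(n-1)$ matrix) can be written with $k-1$ rank-one terms.
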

\begin{proof}
The inclusions
$$
\osec_kX_1\subseteq\sigma_kX_1\subseteq\{A\in kH\mid\rk(A)\leq k\}
$$
are clear. Let $A\in kH$ be a matrix with $\rk(A)\leq k$.
Our goal is to prove that $A\in \osec_kX_1$. We prove this by induction on $k$.
Write $1\leq\rk(A)=\ell\leq k$. Then, by acting with the subgroup of $h$-invariant elements of $G$,
we may assume that $A$ is the diagonal matrix with a~$k$ as its
top-left entry followed by $\ell-1$ ones. If $\ell=1$, then $A$ is the sum of $k$ copies of the matrix $E_{11}$ with just a $1$ as its top-left entry. Note that, in particular, this handles the case $k=1$. Next, assume that $k\geq\ell>1$. Then, from the fact that the equality
$$
\begin{pmatrix}k&0\\0&1\end{pmatrix}=\begin{pmatrix}k-1&\lambda\\\lambda&1-\lambda^2\end{pmatrix}+\begin{pmatrix}1&-\lambda\\-\lambda&\lambda^2\end{pmatrix},\quad \lambda=\sqrt{\frac{k-1}{k}}
$$
decomposes the matrix on the left as a sum of two matrices
of rank $1$ with $k-1$ and $1$ as entries in their top-left corners, we see that there is a decomposition $A=B+C$ with $B\in (k-1)H$ and $C\in X_1$ matrices such that $\rk(B)=\ell-1\leq k-1$. By induction, it follows that $B\in\osec_{k-1}X_1$ and hence $A\in \osec_kX_1$. This concludes the proof.
\end{proof}

It follows from the proposition that the rank of a matrix in $V$ coincides with its monic rank. So in particular, the maximal monic rank is equal to the maximal rank. 

\subsection{Symmetric matrices}\label{ssec: symmetric}
Consider the case where
\begin{itemize}
	\item the vector space 
	$$
	V= \{A\in\KK^{n\times n}\mid A=A^T\}\cong\Sym^2(\KK^n) \cong	\KK[x_1,\dots,x_n]_{(2)} 
	$$
	consists of symmetric $n\times n$ matrices;
	\item the group $G = \GL_n$ acts by $g\cdot A = gAg^T$ for $g\in G$ and $A\in V$;
	\item the variety $X=\{A\in V\mid\rk(A)\leq 1\}$ consists of rank $\leq1$ matrices; and
	\item the linear function $h \in V^*\setminus\{0\}$
	sends a matrix to its top-left entry.
\end{itemize}
Let $H \subseteq V$ be the affine space of matrices $A$ with $h(A)=1$ and take $X_1=X\cap H$.

\begin{remark}
The vector space $V$ can be viewed as the space of quadratic forms in the variables $x_1,\ldots,x_n$ by associating the quadric 
$$
(x_1,\dots,x_n)A(x_1,\dots,x_n)^T
$$
to a symmetric matrix $A$. So, the variety $X$ corresponds to the set of squares of linear forms and affine space $H$ corresponds to the set of polynomials with coefficient $1$ at $x_1^2$.
\end{remark}

\begin{proposition}\label{prop:symmmonicrankmatrices}
We have 
$$
\osec_kX_1=\sigma_kX_1=\{A\in kH\mid\rk(A)\leq k\}
$$
for all $1\leq k\leq n$.
\end{proposition}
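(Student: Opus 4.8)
The plan is to follow the proof of Proposition~\ref{prop:monicrankmatrices} essentially verbatim, with the congruence action $g\cdot A=gAg^{T}$ of $G=\GL_{n}$ in place of the left--right multiplication action. The inclusions
$$
\osec_{k}X_{1}\subseteq\sigma_{k}X_{1}\subseteq\{A\in kH\mid\rk(A)\leq k\}
$$
are immediate, since a sum of $k$ matrices of rank $\leq1$ has rank $\leq k$ and top-left entry $k$, and the condition $\rk(A)\leq k$ is Zariski closed. So everything comes down to the reverse inclusion $\{A\in kH\mid\rk(A)\leq k\}\subseteq\osec_{k}X_{1}$, which I would prove by induction on $k$; once this is established the three sets coincide, proving the proposition.

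For the inductive step I would first reduce $A$ to a normal form using the stabilizer of $h$ in $G$. A matrix $g$ whose first row is $(1,0,\dots,0)$ satisfies $h(gAg^{T})=(g^{T}e_{1})^{T}A(g^{T}e_{1})=e_{1}^{T}Ae_{1}=h(A)$, hence preserves $X$, $H$, $X_{1}$ and therefore $\osec_{k}X_{1}$; so it is enough to treat one matrix from each orbit of this subgroup. Writing $A=\left(\begin{smallmatrix}k&b^{T}\\ b&A'\end{smallmatrix}\right)$ with $b\in\KK^{n-1}$, the \emph{complete-the-square} element $g=\left(\begin{smallmatrix}1&0\\ -b/k&I\end{smallmatrix}\right)$ turns $A$ into $\Diag(k,\,A'-bb^{T}/k)$, and a further congruence on the lower-right $(n-1)\times(n-1)$ block (which again fixes $h$) diagonalizes it, using that over $\KK$ every symmetric matrix is congruent to $\Diag(1,\dots,1,0,\dots,0)$. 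Thus we may assume $A=D:=\Diag(k,1,\dots,1,0,\dots,0)$ with $\ell-1$ ones, where $\ell=\rk(A)\leq k$. If $\ell=1$ then $D$ is the sum of $k$ copies of the rank-one matrix with a single $1$ in its top-left entry, which lies in $X_{1}$; this also covers the base case $k=1$. If $\ell\geq2$ --- so $n\geq2$ and $D$ has a $1$ in position $(2,2)$ --- then, exactly as in the rectangular case, the identity
$$
\begin{pmatrix}k&0\\ 0&1\end{pmatrix}=\begin{pmatrix}k-1&\lambda\\ \lambda&1/k\end{pmatrix}+\begin{pmatrix}1&-\lambda\\ -\lambda&(k-1)/k\end{pmatrix},\qquad\lambda=\sqrt{\tfrac{k-1}{k}},
$$
splits the top-left $2\times2$ block of $D$ into two symmetric rank-one matrices with top-left entries $k-1$ and $1$. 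Correspondingly $D=B+C$ with $C\in X_{1}$ and $B\in(k-1)H$ of rank $\ell-1\leq k-1$; by the inductive hypothesis $B\in\osec_{k-1}X_{1}$, whence $D\in\osec_{k}X_{1}$ and $A\in\osec_{k}X_{1}$.

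I do not anticipate a real obstacle: the argument is a routine adaptation of Proposition~\ref{prop:monicrankmatrices}, and one could equally phrase it for quadratic forms in $x_{1},\dots,x_{n}$ with $X_{1}$ the squares of linear forms having $x_{1}$-coefficient $1$. The only genuine computation is checking that the two $2\times2$ summands above have rank $1$, i.e.\ that their determinants $\tfrac{k-1}{k}-\lambda^{2}$ and $1-\tfrac{1}{k}-\lambda^{2}$ both vanish for $\lambda^{2}=\tfrac{k-1}{k}$; the existence of $\lambda$ is guaranteed since $\KK$ is algebraically closed. The hypothesis $k\leq n$ enters only to ensure that, when $\ell\geq2$, the normal form $D$ has a $1$ in the $(2,2)$-slot to feed into this identity, and that $B\in(k-1)H$ still makes sense with $k-1\leq n$.
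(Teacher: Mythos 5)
Your argument is correct and follows the paper's own proof essentially verbatim: diagonalize $A$ by a congruence that fixes $h$ (you spell out the complete-the-square step that the paper only alludes to), then peel off one rank-one summand using the same $2\times2$ identity and conclude by induction on $k$. The only quibble is your closing remark about where $k\leq n$ is used --- once $\ell=\rk(A)\geq 2$, the diagonal normal form automatically has a $1$ in position $(2,2)$ since $\ell\leq n$, so that bound is not what makes the identity applicable --- but this does not affect the proof.
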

\begin{proof}
As in the proof of Proposition \ref{prop:monicrankmatrices}, it suffices to prove that every symmetric matrix $A\in kH$ with $1<\rk(A)=\ell\leq k$ is an element of $\osec_kX_1$. We again first replace $A$ by a diagonal matrix: it is well-known that every symmetric matrix is congruent to a diagonal matrix. So we can write $A=gDg^T$ with $g\in G$ and $D\in V$ diagonal. By going though the proof of this fact, one can check that $g$ can be chosen so that its action on $V$ is $h$-invariant and $D$ is the diagonal matrix with a~$k$ as its
top-left entry followed by $\ell-1$ ones. This reduces the problem to the case where $A$ is this diagonal matrix. Now, from the fact that
$$
\begin{pmatrix}k&0\\0&1\end{pmatrix}=\begin{pmatrix}k-1&\lambda\\\lambda&1-\lambda^2\end{pmatrix}+\begin{pmatrix}1&-\lambda\\-\lambda&\lambda^2\end{pmatrix},\quad \lambda=\sqrt{\frac{k-1}{k}},
$$
we see that there is a decomposition $A=B+C$ with $B\in (k-1)H$ and $C\in X_1$ such that $\rk(B)=\ell-1\leq k-1$. We again conclude that $\osec_kX_1=\sigma_kX_1=\{A\in kH\mid\rk(A)\leq k\}$.
\end{proof}

Again, it follows from the proposition that the rank of a matrix in $V$ coincides with its monic rank. And in particular, the maximal monic rank is equal to the maximal rank. 

\subsection{\texorpdfstring{$2 \times 2 \times 2$}{2x2x2} tensors}
Consider the case where
\begin{itemize}
	\item the vector space $V = \KK^2\otimes\KK^2\otimes\KK^2$ consists of $2\times 2\times 2$ tensors;
	\item the group $G = \GL_2\times\GL_2\times\GL_2$ acts on $V$ in the natural way;
	\item the variety $X = \{v_1\otimes v_2\otimes v_3 \mid v_1,v_2,v_2\in\KK^2\}$ consists of rank $\leq1$ tensors; and
	\item the linear function $h \in V^*\setminus\{0\}$ sends a tensor to its coefficient at $e_1\otimes e_1\otimes e_1$.
\end{itemize}
Here we fix $e_1,e_2$ as a basis for $\KK^2$. Let $H \subseteq V$ be the affine space consisting of tensors $t$ with $h(t)=1$ and take $X_1=X\cap H$. Any given tensor $t\in V$ can be written as 
$$
t = M_1\otimes e_1 + M_2\otimes e_2 = 
\begin{pmatrix}
a_{11} & a_{12} \\
a_{21} & a_{22} \\
\end{pmatrix} \otimes e_1 + \begin{pmatrix}
b_{11} & b_{12} \\
b_{21} & b_{22} \\
\end{pmatrix} \otimes e_2.
$$
The matrices $M_1$ and $M_2$ in $\KK^{2\times2}$ are usually called the \textit{slices} of $t$. For ease of notation, we denote such a tensor $t\in V$ by
$$
\begin{pmatrix}a_{11}&a_{12}&\aug&b_{11}&b_{12}\\a_{21}&a_{22}&\aug&b_{21}&b_{22}\end{pmatrix}.
$$
Using this notation, we have
$$
X_1 = \left\{\begin{pmatrix}1&a&\aug&c&ac\\b&ab&\aug&bc&abc\end{pmatrix}\hspace{2pt}\middle|\hspace{3pt}a,b,c\in\KK\right\}.
$$ 
Let the group $(\KK^3,+)$ act on $V$ by
\begin{eqnarray*}
(\lambda,0,0)\cdot\begin{pmatrix}a_{11}&a_{12}&\aug&b_{11}&b_{12}\\a_{21}&a_{22}&\aug&b_{21}&b_{22}\end{pmatrix}&=&\begin{pmatrix}a_{11}&a_{12}+\lambda a_{11}&\aug&b_{11}&b_{12}+\lambda b_{11}\\a_{21}&a_{22}+\lambda a_{21}&\aug&b_{21}&b_{22}+\lambda b_{21}\end{pmatrix}\\
(0,\lambda,0)\cdot\begin{pmatrix}a_{11}&a_{12}&\aug&b_{11}&b_{12}\\a_{21}&a_{22}&\aug&b_{21}&b_{22}\end{pmatrix}&=&\begin{pmatrix}a_{11}&a_{12}&\aug&b_{11}&b_{12}\\a_{21}+\lambda a_{11}&a_{22}+\lambda a_{12}&\aug&b_{21}+\lambda b_{11}&b_{22}+\lambda b_{12}\end{pmatrix}\\
(0,0,\lambda)\cdot\begin{pmatrix}a_{11}&a_{12}&\aug&b_{11}&b_{12}\\a_{21}&a_{22}&\aug&b_{21}&b_{22}\end{pmatrix}&=&\begin{pmatrix}a_{11}&a_{12}&\aug&b_{11}+\lambda a_{11}&b_{12}+\lambda a_{12}\\a_{21}&a_{22}&\aug&b_{21}+\lambda a_{21}&b_{22}+\lambda a_{22}\end{pmatrix}
\end{eqnarray*}
for all $\lambda\in\KK$. This action is well-defined and both $X$ and $H$ are stable under it. For any $k\neq 0$, note that the $\KK^3$-orbit of any tensor $t\in kH$ contains a unique tensor of the form
$$
t'=\begin{pmatrix}
k & 0 &\aug& 0 & d_{13} \\
0 & d_{12} &\aug& d_{23} & e \\
\end{pmatrix}
$$
where $d_{12}, d_{13}, d_{23}, e\in \KK$. Indeed, modify the first slice of $t$ by using suitable elements $(u_1,0,0), (0,u_2,0)\in \KK^3$ and then the second slice by using some $(0,0,u_3)\in \KK^3$. Now, consider the second open monic secant $\osec_2 X_1$. Any tensor it contains that is of the above form must be equal to
$$\begin{pmatrix}
2 & 0 &\aug& 0 & \beta \\
0 & \gamma &\aug& \alpha & 0
\end{pmatrix}=\begin{pmatrix}
1 & a &\aug& c & ac \\
b & ab &\aug& bc & abc 
\end{pmatrix}+ \begin{pmatrix}
1 & -a &\aug& -c & ac \\
-b & ab &\aug& bc & -abc 
\end{pmatrix}$$
for some $a,b,c\in\KK$. For given $\alpha,\beta,\gamma\in\KK$, such $a,b,c$ exist unless exactly one of $\alpha,\beta,\gamma$ is equal to $0$. So 
$$
\osec_2X_1=\KK^3\cdot\left\{\begin{pmatrix}
2 & 0 &\aug& 0 & \mu_2 \\
0 & \mu_3 &\aug& \mu_1 & 0 \\
\end{pmatrix}~\middle|~\begin{array}{c}\mu_1,\mu_2,\mu_3\in\KK,\\\#\{i\mid\mu_i=0\}\neq1\end{array}\right\}
$$ 
One can verify computationally that the second monic secant $\sigma_2 X_1$ is equal to
$$
\left \lbrace  \begin{pmatrix}
2 & x_1 &\aug& x_3 & y_{13} \\
x_2 & y_{12} &\aug& y_{23} & z_{123} \\
\end{pmatrix} ~\middle|~ x_1x_2x_3+2z_{123} = x_1y_{23} + x_2y_{13}+x_3y_{12} \right\rbrace
$$
and this shows that $\osec_2X_1\neq\sigma_2X_1$.

\begin{proposition}
We have $\osec_3X_1=\sigma_3X_1=3H$.
\begin{proof}
It suffices to prove that the $\KK^3$-orbit of an arbitrary tensor $t\in 3H$ contains a tensor of the form 
$$
\begin{pmatrix}
2 & 0 &\aug& 0 & \beta \\
0 & \gamma &\aug& \alpha & 0
\end{pmatrix} + \begin{pmatrix}
1 & a &\aug& c & ac \\
b & ab &\aug& bc & abc \\
\end{pmatrix} \in \osec_3 X_1
$$
where $a,b,c\in \KK$ and $\alpha, \beta, \gamma\in \KK^{*}$. By the discussion above, the $\KK^3$-orbit of any $v\in 3H$ contains a tensor of the form
$$
\begin{pmatrix}
3 & 0 &\aug& 0 & d_{13} \\
0 & d_{12} &\aug& d_{23} & e \\
\end{pmatrix}
$$
where $d_{12}, d_{13}, d_{23}, e \in \KK$. For $a,b,c,\in \KK$, by definition of the action, we obtain
$$
(a/3,b/3,c/3)\cdot \begin{pmatrix}
3 & 0 &\aug& 0 & d_{13} \\
0 & d_{12} &\aug& d_{23} & e \\
\end{pmatrix} = \begin{pmatrix}
3 & a &\aug& c & \beta \\
b & \gamma &\aug& \alpha & \delta \\
\end{pmatrix}
$$
with
\begin{eqnarray*}
(\alpha,\beta,\gamma)&=&(d_{23} + bc/3,d_{13} + ac/3,d_{12} + ab/3),\\
\delta &=& e + \left(ad_{23}+bd_{13}+cd_{12}\right)/3 +abc/9.
\end{eqnarray*}
To finish the proof, we need to verify that there exist $a,b,c\in \KK$ such that $\alpha, \beta, \gamma\in \KK^{*}$ and $\delta = abc$ are satisfied. The condition $\alpha\in \KK^{*}$ restricted to the affine surface $\delta = abc$ is an open and dense condition. Indeed, this holds if and only if $\alpha = d_{23} + bc/3$ is not a factor of the polynomial $\delta - abc \in \KK[a,b,c]$. Perfoming the division algorithm, we find that the reminder is identically zero as polynomial in $\KK[a,b,c]$ if and only if $d_{12} = d_{13} = d_{23} = e = 0$. The same argument for $\beta$ and $\gamma$ yields the same conclusion. Thus the orbit of every 
$$
t\in 3H\setminus\left\{\begin{pmatrix}3&0&\aug&0&0\\0&0&\aug&0&0\end{pmatrix}\right\}
$$ 
contains a tensor of the claimed form. So the equality
$$
\begin{pmatrix}3&0&\aug&0&0\\0&0&\aug&0&0\end{pmatrix}=\begin{pmatrix}1&0&\aug&0&0\\0&0&\aug&0&0\end{pmatrix}+\begin{pmatrix}1&0&\aug&0&0\\0&0&\aug&0&0\end{pmatrix}+\begin{pmatrix}1&0&\aug&0&0\\0&0&\aug&0&0\end{pmatrix}
$$ 
finishes the proof. 
\end{proof}
\end{proposition}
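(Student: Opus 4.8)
The plan is to run the entire argument inside a single orbit of the additive group $\KK^3$ acting by the shears introduced above. That action is linear and stabilises $X$ and $H$, hence also $X_1 = X\cap H$, and therefore it stabilises $3H$ and $\osec_3 X_1 = X_1+X_1+X_1$ as well. So it suffices to show that every $\KK^3$-orbit meeting $3H$ also meets $\osec_3 X_1$. By the normal form established above, each such orbit contains a tensor $t_0$ with slices $\Diag(3,d_{12})$ and $\begin{pmatrix}0&d_{13}\\d_{23}&e\end{pmatrix}$, so everything reduces to proving $t_0\in\osec_3 X_1$ for arbitrary $d_{12},d_{13},d_{23},e\in\KK$.

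First I would record, from the description of $\osec_2 X_1$ obtained above, that this set contains every tensor with slices $\Diag(2,\mu_3)$ and $\begin{pmatrix}0&\mu_2\\\mu_1&0\end{pmatrix}$ for which \emph{not exactly one} of $\mu_1,\mu_2,\mu_3$ vanishes. The idea is then to apply a shear $g=(a/3,b/3,c/3)$ to $t_0$ and to split $g\cdot t_0 = q + p$, where $q$ is the generic rank-one element of $X_1$ (slices $\begin{pmatrix}1&a\\b&ab\end{pmatrix}$ and $\begin{pmatrix}c&ac\\bc&abc\end{pmatrix}$) and $p$ is of the shape above. Comparing the eight entries: four agree automatically, three more merely determine the parameters of $p$ as the explicit quadratics $\mu_1 = d_{23}-\tfrac23 bc$, $\mu_2 = d_{13}-\tfrac23 ac$, $\mu_3 = d_{12}-\tfrac23 ab$, and the last entry — the bottom-right one — yields a single genuine constraint $\delta(a,b,c)=abc$, where $\delta(a,b,c) = e + \tfrac13(ad_{23}+bd_{13}+cd_{12}) + \tfrac19 abc$ is the cubic produced by the shear in that corner.

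The crux, and the step where I expect the only real work to lie, is therefore the elementary claim that the affine surface $S = \{(a,b,c)\in\KK^3 \mid \delta(a,b,c)=abc\}$ contains a point at which not exactly one of $\mu_1,\mu_2,\mu_3$ vanishes. The decisive observation is that $\delta(a,b,c)-abc$ has cubic part $-\tfrac89 abc$, which is squarefree, so $\delta-abc$ itself is squarefree and the ideal of $S$ is principal, generated in degree $3$; in particular it contains no nonzero quadratic, so none of the (nonzero) quadratics $\mu_i$ vanishes identically on $S$. A short look at the at most three components of $S$ then produces the desired point: at a general point of $S$ all three $\mu_i$ are nonzero, except that on a component along which one of $a,b,c$ vanishes two of the $\mu_i$ vanish there instead — in both cases the number of vanishing $\mu_i$ is not $1$. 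Granting this, for the corresponding $(a,b,c)$ we have $g\cdot t_0 = q+p$ with $q\in X_1$ and $p\in\osec_2 X_1$, hence $g\cdot t_0\in\osec_3 X_1$, and by $\KK^3$-stability $t_0\in\osec_3 X_1$.

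It then only remains to note that the single orbit where one cannot arrange all three $\mu_i$ to be nonzero, namely $t_0 = 3E_{111}$ with slices $\Diag(3,0)$ and $0$, is in any case covered by the trivial decomposition $3E_{111} = E_{111}+E_{111}+E_{111}$ with $E_{111}\in X_1$. This gives $\osec_3 X_1 = 3H$; as $3H$ is an affine space it is Zariski closed, so $\sigma_3 X_1 = \overline{\osec_3 X_1} = 3H$ as well — which, together with $\sigma_2 X_1\subsetneq 2H$ from the computation above, settles case (iv) of Theorem~\ref{thm:Minorbit}: the maximal monic rank with respect to $h$ equals $3$, the maximal rank of a $2\times2\times2$ tensor.
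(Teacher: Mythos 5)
Your proposal follows the paper's strategy essentially step for step: reduce to the normal form $t_0$ with slices $\Diag(3,d_{12})$, $\begin{pmatrix}0&d_{13}\\d_{23}&e\end{pmatrix}$ inside the $\KK^3$-orbit, subtract a generic rank-one tensor $q\in X_1$ from $g\cdot t_0$ and force the difference $p$ to lie in the normal form for $\osec_2 X_1$, which yields the single constraint $\delta=abc$ together with the requirement that not exactly one of $\mu_1,\mu_2,\mu_3$ vanish, and finally treat $3e_1^{\otimes 3}$ separately. Where you differ is in the last, ``find a good point on $S=\{\delta=abc\}$'' step. The paper runs a division algorithm on $\delta-abc$ modulo each of the three quadratics and checks directly that the remainder vanishes identically only when $d_{12}=d_{13}=d_{23}=e=0$; you instead deduce from the squarefreeness of the leading term $-\tfrac89 abc$ that $\delta-abc$ is squarefree, so its ideal contains no nonzero quadratic, and then inspect the components. (Incidentally, you compute the entries of the summand $p$ correctly as $\mu_i=d_{*}-\tfrac{2}{3}\cdot$, whereas the paper's displayed $(\alpha,\beta,\gamma)=(d_{23}+bc/3,\ldots)$ records the entries of $g\cdot t_0$ rather than of $p$, a small notational slip that does not affect the remainder computation.) Your use of the full ``not exactly one of the $\mu_i$ vanishes'' criterion is cleaner than insisting all three be nonzero, and it even covers the degenerate point $3e_1^{\otimes 3}$ by taking $a=0$, although you (like the paper) still list it separately.

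One place your component analysis is terser than it should be: ``$\mu_i\notin I(S)$'' only rules out $\mu_i$ vanishing on all of $S$. When $S$ is reducible you must also exclude a $\mu_i$ vanishing on an irreducible \emph{component} that is not a coordinate plane. This does not happen, but it needs a word: if $\mu_1=d_{23}-\tfrac23 bc$ vanishes on a component $S_j$ then the irreducible generator of $I(S_j)$ divides $\mu_1$; if $d_{23}\neq0$ then $\mu_1$ is irreducible of degree $2$ and would have to be a factor of $\delta-abc$, but comparing the coefficients of $a$ and of $abc$ in $\mu_1\cdot(\lambda a+t)$ with those of $\delta-abc$ forces $\lambda=\tfrac13$ and $\lambda=\tfrac43$ simultaneously, a contradiction; if $d_{23}=0$ then $\mu_1=-\tfrac23 bc$ and $S_j$ must be $\{b=0\}$ or $\{c=0\}$, which in turn forces (say for $\{b=0\}$) $e=d_{12}=d_{23}=0$ and then $\mu_3$ also vanishes there. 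With that spelled out, no component has exactly one $\mu_i$ vanishing, and the argument is airtight.
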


\subsection{The adjoint representation of \texorpdfstring{$\SL_n$}{SL\_n}}
Fix a positive integer $n\in\NN$. We denote $\gl(\KK^n)$ by $\gl_n$ and $\sl(\KK^n)$ by $\sl_n$. Let $\la-,-\ra\colon\KK^n\times\KK^n\to\KK$ be the bilinear form that sends $(v,\alpha)$ to $\alpha^Tv=\tr(v\alpha^T)$ for all vectors $v,\alpha\in\KK^n$. Consider the case where 
\begin{itemize}
\item the vector space $V=\sl_n$ consists of trace-zero $n\times n$ matrices;
\item the group $G=\SL_n$ acts on $V$ by conjugation; 
\item the variety $X=\{v\alpha^T\mid v,\alpha\in\KK^n,\la v,\alpha\ra=0\}$ consists of rank $\leq 1$ matrices; and
\item the linear function $h$ sends a matrix $A$ to its top-right entry. 
\end{itemize}
Fix vectors $z,\omega\in\KK^n\setminus\{0\}$. Let $H\subseteq V$ be the affine space of matrices $A$ with $\omega^TAz=1$ and take $X_1=X\cap H$. Then we have
\begin{eqnarray*}
X_1 &=& \{v\alpha^T\mid v,\alpha\in\KK^n, \la v,\alpha\ra=0, \la v,\omega\ra\cdot\la z,\alpha\ra=1\}\\
&=& \{v\alpha^T\mid v,\alpha\in\KK^n, \la v,\alpha\ra=0, \la v,\omega\ra=\la z,\alpha\ra=1\}.
\end{eqnarray*}
In this setting, the ordinary (non-monic) open secant variety $\osec_k X$ is known to be closed and is equal to the variety of trace-zero matrices of rank $\leq k$ \cite[Theorem 1.1]{Baur04}. We write $\sigma_k X$ for this variety. The function
\begin{eqnarray*}
V&\to&\KK\\
A&\mapsto&\omega^TAz
\end{eqnarray*}
is a highest weight vector precisely when $\la z,\omega\ra=0$. If this is the case, then we can assume by changing the basis that $z=e_n$ and $\omega=e_1$, which gives us the function $h$. We will prove that every trace-zero matrix with an $n$ in its top-right corner is the sum of $n$ trace-zero matrices of rank $1$ with $1$s in their top-right corners. Our first goal is to prove the following theorem, which does not need the assumption that $\la z,\omega\ra=0$.

\begin{theorem} \label{thm:oseckX1}
The monic open secant variety $\osec_k X_1$ contains all matrices $A\in kH$ of rank~$k$ for which the following property holds:
$$
\mathscr{P}(A):\text{neither $Az$ is an eigenvector of $A$ nor $A^T\omega$ is an eigenvector of $A^T$.} 
$$
\end{theorem}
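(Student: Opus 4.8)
The plan is to recast the condition $A\in\osec_kX_1$ as a statement about a single endomorphism of $\im(A)$, and then to establish that statement by an induction modelled on the classical fact that a trace‑zero operator is conjugate to one with vanishing diagonal. As a first step I would prove the following reduction: putting $W:=\im(A)$, which is $A$‑invariant of dimension $k$, and $B:=A|_{W}$, which has $\tr(B)=\tr(A)=0$ since $A$ maps $\KK^n$ into $W$, one has $A\in\osec_kX_1$ if and only if there is a basis $v_1,\dots,v_k$ of $W$ with (a) $\omega^{T}v_i=1$ for all $i$, (b) $v_1+\dots+v_k=Az$, and (c) the matrix of $B$ in this basis has zero diagonal. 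For the nontrivial direction, extend $v_1,\dots,v_k$ to a basis of $\KK^n$ with dual basis $v_1^{*},\dots,v_n^{*}$ and put $\alpha_i:=A^{T}v_i^{*}$; one then checks $A=\sum_{i=1}^{k}v_i\alpha_i^{T}$, that the $\alpha_i$ form a basis of $\im(A^{T})$, and that (b) and (c) translate exactly into $\langle z,\alpha_i\rangle=1$ and $\langle v_i,\alpha_i\rangle=0$ for all $i$, so that each $v_i\alpha_i^{T}$ lies in $X_1$. The converse is immediate because in any decomposition $A=\sum_{i=1}^{k}v_i\alpha_i^{T}$ of a rank‑$k$ matrix the vectors $v_i$ span $W$; and (a) and (b) are compatible because $\omega^{T}Az=k$.

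Under this reduction $\mathscr{P}(A)$ becomes transparent: ``$Az$ is not an eigenvector of $A$'' is precisely ``$Az$ is not an eigenvector of $B$'' (note $Az\in W$), and dually the condition on $A^{T}\omega$ is a general‑position condition for $\omega|_{W}$---equivalently for the hyperplane $\ker(\omega|_{W})$---with respect to $B$. A little care is needed here because the natural identification of $\im(A^{T})$ with $W^{*}$ degenerates when $A$ has a Jordan block at eigenvalue $0$, but $\mathscr{P}(A)$ already excludes the extreme cases; in particular $A^{2}\neq 0$, so $B\neq 0$ and $B$ is non‑scalar. The base case $k=1$ of the induction is trivial: $v_1:=Az$ works, with $\omega^{T}v_1=k=1$ and sole diagonal entry $\tr(B)=0$, and $\mathscr{P}$ is unused---as it must be, since $\mathscr{P}$ fails for $k=1$.

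For the inductive step $k\geq 2$, I would choose $v_k\in W$ with $\omega^{T}v_k=1$ that is not an eigenvector of $B$ for any nonzero eigenvalue; this is possible because such eigenvectors lie in finitely many proper subspaces of $W$, none of which contains the affine hyperplane $\{v\in W:\omega^{T}v=1\}$ (using that $B$ is non‑scalar). Then I would pick a hyperplane $W'\subset W$ with $Bv_k\in W'$, $v_k\notin W'$ and $\omega|_{W'}\neq 0$; with $\pi\colon W\to W'$ the projection along $\KK v_k$ and $C:=\pi\circ B|_{W'}$, the vanishing of the $v_k$‑component of $Bv_k$ gives $\tr(C)=\tr(B)=0$, while $\dim W'=k-1$ and $\omega^{T}(Az-v_k)=k-1$. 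Since the diagonal of $B$ in a basis $v_1,\dots,v_k$ of $W$ consists of the diagonal of $C$ in $v_1,\dots,v_{k-1}$ together with that vanishing entry, a basis of $W'$ produced by the inductive hypothesis applied to $(W',C,\omega|_{W'},Az-v_k)$ completes a basis of $W$ of the required form.

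The main obstacle is exactly the application of the inductive hypothesis: one must guarantee that $v_k$ (and $W'$) can be chosen so that $Az-v_k$ is not a $C$‑eigenvector and the dual non‑degeneracy condition again holds. Each of these is a Zariski‑closed condition on the admissible pair $(v_k,W')$, and the crux is to show---using that $Az$ is not a $B$‑eigenvector and $A^{T}\omega$ is not an $A^{T}$‑eigenvector---that these bad loci do not exhaust the space of admissible choices. I expect this to require an explicit genericity or dimension count, and I would probably dispatch the smallest case $k=2$ directly: there Cayley--Hamilton forces (a)--(c) to amount to finding $v_2$ with $\omega^{T}v_2=1$ and $Az-v_2$ proportional to $Bv_2$, which reduces to a single quadratic equation in one scalar whose leading and linear coefficients cannot both vanish precisely when $\mathscr{P}(A)$ holds.
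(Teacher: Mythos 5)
Your reduction is correct, and it is a genuinely different (and arguably cleaner) way of packaging the problem than the paper's. The paper works directly with $n\times n$ matrices: it subtracts an element $J\in X_1$ from $A$ and tracks the resulting rank-$(k-1)$ matrix together with the eigenvector conditions. You instead compress everything into the $k$-dimensional restriction $B=A|_{\im A}$ and turn membership in $\osec_kX_1$ into a basis-choice problem: the equivalence ``$A\in\osec_kX_1$ iff there is a basis $v_1,\dots,v_k$ of $\im(A)$ with $\omega^Tv_i=1$, $\sum_iv_i=Az$, and $B$ having zero diagonal'' is verified correctly, and the translations of the two halves of $\mathscr{P}(A)$ into ``$Az$ is not a $B$-eigenvector'' and ``$\omega|_{\im A}$ is not a $B^T$-eigenvector'' are right.

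However, there is a genuine gap, and you name it yourself. For $k\geq 3$ you must show that among the admissible pairs $(v_k,W')$ --- with $\omega^Tv_k=1$, $Bv_k\in W'$, $u-v_k\in W'$, $v_k\notin W'$ --- there is one for which the compressed data $(W',C,\omega|_{W'},u-v_k)$ again satisfies the two non-eigenvector hypotheses. You call this ``an explicit genericity or dimension count,'' but this is not a routine step to be deferred: it \emph{is} the theorem. In the paper's proof of Theorem \ref{thm:oseckX1} for $k\geq 3$ the analogous statement absorbs essentially the entire argument: the fiber variety $Q$ of admissible choices is constructed, Lemma \ref{lm:Choice} is proved to control the image of one projection, density of $\pi_1(Q)$ and $\pi_2(Q)$ is established, and then --- crucially, because $Q$ (your parameter space of $(v_k,W')$) need not be irreducible, so ``two dense opens must meet'' is unavailable --- a tangent-space computation plus a reduction to an affine line is needed to close the argument. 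None of this is present in your sketch, and without a substitute of comparable strength, showing that the ``bad loci'' do not cover the admissible parameter space (and dealing with possible reducibility of that space), the induction does not go through.

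Two smaller points. First, for the inductive step to make sense you also need $u-v_k\in W'$, which already constrains $v_k$: there must exist a hyperplane of $W$ containing $Bv_k$ and $u-v_k$ but missing $v_k$, i.e.\ $v_k\notin\span(Bv_k,\,u-v_k)$; this should be stated alongside the condition that $v_k$ is not a nonzero-eigenvalue eigenvector of $B$. Second, in the $k=2$ discussion, once $\tr(B)=0$ is used the quadratic in $t$ becomes $2\gamma t^2+\beta=0$ with vanishing linear term, so the coefficients in play are the leading and \emph{constant} ones, and $\mathscr{P}(A)$ is the condition that \emph{both} are nonzero (giving a nonzero root), not that they do not both vanish; the case where both vanish is the extra solvable case that appears as item (4) in the paper's classification of $\osec_2X_1$ and is not covered by $\mathscr{P}$.
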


\begin{remark}
To see that the final result is {\em not} true for all $z,\omega$, note that
$$
\begin{pmatrix}2&0\\1&-2\end{pmatrix}
$$
is not the sum of two rank-one matrices of trace zero with $1$s in their top-left corners. So $z=\omega=e_1$ gives an example where the maximal monic rank is {\em not} equal to the maximal rank. However, this does not give a negative answer to Question~\ref{que:Minorbit} since the corresponding function $h$ is not a highest weight vector in this case.
\end{remark}

Note that if $\omega^TAz\neq0$, then both $Az\neq0$ and $A^T\omega\neq0$. The following lemma gives some different descriptions of the property $\mathscr{P}(A)$.

\begin{lemma}\label{lm:equivP}
Let $A\in\gl_n$ be a matrix and let $v\in\KK^n\setminus\ker(A)$ be a vector. Then the following are equivalent:
\begin{itemize}
\item[(1)] The vector $Av$ is an eigenvector of $A$.
\item[(2)] Either $v\in\ker(A^2)$ or $v-u\in\ker(A)$ for some eigenvector $u$ of $A$ with a non-zero eigenvalue.
\item[(3)] We have $Av\in\KK v+\ker(A)$.
\end{itemize}
\end{lemma}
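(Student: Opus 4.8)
The plan is to run through the equivalences by elementary linear algebra, using throughout that $Av\neq0$ because $v\notin\ker(A)$; this is what makes the phrase ``$Av$ is an eigenvector of $A$'' equivalent to the bare algebraic statement that $A(Av)=\lambda\cdot Av$ for some $\lambda\in\KK$.

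First I would prove $(1)\Leftrightarrow(3)$. For $(1)\Rightarrow(3)$: if $A(Av)=\lambda(Av)$, then $A(Av-\lambda v)=0$, so $Av=\lambda v+(Av-\lambda v)\in\KK v+\ker(A)$. For $(3)\Rightarrow(1)$: write $Av=\mu v+w$ with $\mu\in\KK$ and $w\in\ker(A)$; applying $A$ gives $A(Av)=\mu(Av)+Aw=\mu(Av)$, and since $Av\neq0$ this exhibits $Av$ as an eigenvector, with eigenvalue $\mu$.

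Next I would prove $(1)\Leftrightarrow(2)$ by separating into the cases of zero and nonzero eigenvalue. Assume $(1)$, say $A(Av)=\lambda(Av)$. If $\lambda=0$, then $A^2v=0$, i.e.\ $v\in\ker(A^2)$, the first alternative in $(2)$. If $\lambda\neq0$, set $u:=\lambda^{-1}Av$; then $u\neq0$, $Au=\lambda^{-1}A(Av)=\lambda^{-1}\lambda(Av)=Av=\lambda u$, so $u$ is an eigenvector with nonzero eigenvalue $\lambda$, and $A(v-u)=Av-Av=0$, so $v-u\in\ker(A)$, the second alternative. Conversely, if $v\in\ker(A^2)$ then $A(Av)=A^2v=0$, so $Av$ is an eigenvector with eigenvalue $0$; and if $v-u\in\ker(A)$ for an eigenvector $u$ with eigenvalue $\mu\neq0$, then $Av=Au=\mu u$ is a nonzero scalar multiple of $u$, hence again an eigenvector. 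This closes all the implications.

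I do not anticipate a genuine obstacle; the only subtlety is the convention that eigenvectors are nonzero, so one must keep invoking $v\notin\ker(A)$ (and, in the $\lambda=0$ branch, observe that $Av\in\ker(A)\setminus\{0\}$ is a legitimate eigenvector for the eigenvalue $0$). Everything else is routine substitution and can be written out in a few lines.
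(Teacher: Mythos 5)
Your proof is correct and is essentially the same elementary linear-algebra argument as the paper's. The paper proves the cycle $(2)\Rightarrow(3)\Rightarrow(1)\Rightarrow(2)$, using the decomposition $\ker(A(A-\lambda I_n))=\ker(A)\oplus\ker(A-\lambda I_n)$ for $\lambda\neq 0$ to get $(1)\Rightarrow(2)$; your explicit choice $u:=\lambda^{-1}Av$ in the nonzero-eigenvalue branch is just a hands-on rephrasing of that same decomposition, and the rest of your computations match the ``straightforward'' steps the paper leaves to the reader.
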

\begin{proof}
For the implication $(1)\Rightarrow(2)$, note that 
$$
\ker(A(A-\lambda I_n))=\ker(A)\oplus\ker(A-\lambda I_n)
$$
for all $\lambda\in\KK\setminus\{0\}$. The implications $(2)\Rightarrow(3)\Rightarrow(1)$ are straightforward.
\end{proof}

Starting with an element $A\in kH \cap \sigma_kX$, we will try to find an element 
$$
J\in X_1=\{v\alpha^T\mid v,\alpha\in\KK^n, \la v,\alpha\ra=0, \la v,\omega\ra=\la z,\alpha\ra=1\}
$$ such that $A-J \in (k-1)H \cap \sigma_{k-1}X$. This explains the use of the following lemmas.

\begin{lemma} \label{lm:Step1}
Let $A\in\gl_n$ be a matrix, let $v,\alpha\in\KK^n$ be vectors and take $J=v\alpha^T$. Then the following statements hold:
\begin{enumerate}
\item $\alpha\in\im(A^T)$ if and only if $\ker(\alpha^T)\supseteq\ker(A)$.
\item $\rk(A-J)\leq\rk(A)$ if and only if $v\in\im(A)$ or $\alpha\in\im(A^T)$.
\item $\rk(A-J)<\rk(A)$ if and only if $\alpha\in\im(A^T)$ and $v=Ax$ for some vector $x\in\KK^n$ such that $\la x,\alpha\ra=1$. 
\end{enumerate}
\end{lemma}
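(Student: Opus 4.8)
The plan is to treat all three statements as elementary facts about rank-one perturbations $A\mapsto A-v\alpha^T$, leaning on two standard devices. The first is the duality $\im(A^T)=(\ker A)^{\perp}$ with respect to the pairing $\la\cdot,\cdot\ra$, reformulated as: $\alpha\in\im(A^T)$ iff $\ker(\alpha^T)\supseteq\ker(A)$. The second is the factorization $A-v\alpha^T=A(I_n-x\alpha^T)$, valid whenever $v=Ax$, together with the observations that $\det(I_n-x\alpha^T)=1-\la x,\alpha\ra$ and that when $\la x,\alpha\ra=1$ the matrix $I_n-x\alpha^T$ is the rank-$(n-1)$ projection with kernel $\KK x$ and image $\ker(\alpha^T)$.

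For (1), I would first check the inclusion $\im(A^T)\subseteq(\ker A)^{\perp}$ directly: if $\alpha=A^T\beta$ and $Ax=0$ then $\la x,\alpha\ra=\beta^TAx=0$; equality then follows from the dimension count $\dim\im(A^T)=\rk(A)=n-\dim\ker(A)=\dim(\ker A)^{\perp}$. Since $(\ker A)^{\perp}=\{\alpha\mid\ker(\alpha^T)\supseteq\ker(A)\}$, this is exactly (1). For (2), the ``if'' direction is immediate: if $v\in\im(A)$ then $\im(A-v\alpha^T)\subseteq\im(A)$, and the case $\alpha\in\im(A^T)$ follows by transposing. For the converse, suppose $v\notin\im(A)$ and $\alpha\notin\im(A^T)$; by (1) there is $x\in\ker(A)$ with $\la x,\alpha\ra\neq0$, so $(A-v\alpha^T)x=-\la x,\alpha\ra\,v$ shows $v\in\im(A-v\alpha^T)$, and then $Ay=(A-v\alpha^T)y+\la y,\alpha\ra\,v\in\im(A-v\alpha^T)$ for every $y$. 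Hence $\im(A-v\alpha^T)\supseteq\im(A)+\KK v$, which has dimension $\rk(A)+1$, so $\rk(A-v\alpha^T)>\rk(A)$.

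For (3), the key remark is that $A$ and $A-v\alpha^T$ agree on the hyperplane $\ker(\alpha^T)$, so $\rk(A-v\alpha^T)\geq\dim A(\ker\alpha^T)$. If $\alpha\notin\im(A^T)$, then $\ker(A)\not\subseteq\ker(\alpha^T)$, giving $\dim A(\ker\alpha^T)=(n-1)-(\dim\ker A-1)=\rk(A)$, so the rank cannot drop; thus $\rk(A-v\alpha^T)<\rk(A)$ forces $\alpha\in\im(A^T)$. Assuming now $\ker(A)\subseteq\ker(\alpha^T)$, choose $y_0$ with $\la y_0,\alpha\ra=1$; then $\im(A-v\alpha^T)=A(\ker\alpha^T)+\KK(Ay_0-v)$ with $\dim A(\ker\alpha^T)=\rk(A)-1$, so $\rk(A-v\alpha^T)<\rk(A)$ iff $Ay_0-v\in A(\ker\alpha^T)$, i.e. $v=A(y_0-w)$ for some $w\in\ker(\alpha^T)$; setting $x:=y_0-w$ gives $\la x,\alpha\ra=1$ as required, and the reverse implication follows from the factorization $A-v\alpha^T=A(I_n-x\alpha^T)$.

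I expect the main obstacle to be the converse half of (3): one must exclude the deceptive configuration where $v=Ax$ holds but $\la x,\alpha\ra\neq1$ (there $I_n-x\alpha^T$ is invertible, so the rank is unchanged) and confirm that a genuine rank drop pins the scalar to $\la x,\alpha\ra=1$. The bookkeeping on $\dim A(\ker\alpha^T)$ in the two cases $\ker(A)\subseteq\ker(\alpha^T)$ versus not is precisely what makes this rigorous, so that is the step I would write out in full detail; everything else is routine.
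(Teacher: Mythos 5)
The paper gives no proof of this lemma (it is dismissed with ``The proofs are straightforward''), so there is nothing to compare against; but your argument is correct and complete. Each step checks out: the duality $\im(A^T)=(\ker A)^{\perp}$ via the dimension count, the dichotomy in (2) producing $\im(A-J)\supseteq\im(A)+\KK v$ when both membership conditions fail, and the hyperplane decomposition $\im(A-J)=A(\ker\alpha^T)+\KK(Ay_0-v)$ together with the factorization $A-v\alpha^T=A(I_n-x\alpha^T)$ in (3) -- including the two edge cases (the forced $\alpha\neq 0$, and the impossibility of the hypothesis of (3) when $\rk A=0$) that your setup handles implicitly.
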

\begin{proof}
The proofs are straightforward.
\end{proof}

\begin{lemma} \label{lm:Step2}
Let $A\in\gl_n$ be a matrix, let $x\in\KK^n\setminus\ker(A)$ be a vector and take $v=Ax \neq 0$. Then exactly one of the following is true:
\begin{itemize}
\item There exists a vector $\alpha\in\im(A^T)$ with $\la x,\alpha\ra=1$ and $\la v,\alpha\ra=0$.
\item The vector $v$ is an eigenvector of $A$ with a non-zero eigenvalue.
\end{itemize}
\end{lemma}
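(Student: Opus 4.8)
The plan is to turn the existence assertion in the first bullet into a question about an affine function on an affine hyperplane, and then to identify exactly when that function attains the value $0$.

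First I would use that every $\alpha\in\im(A^T)$ is of the form $\alpha=A^T\beta$ for some $\beta\in\KK^n$, and rewrite the two conditions on $\alpha$ in terms of $\beta$. Since $\la p,q\ra=q^Tp$ and $v=Ax$, we have $\la x,A^T\beta\ra=\beta^TAx=\la v,\beta\ra$ and $\la v,A^T\beta\ra=\beta^TAv=\la Av,\beta\ra$. As $\beta\mapsto A^T\beta$ maps $\KK^n$ onto $\im(A^T)$, the first alternative holds if and only if there exists $\beta\in\KK^n$ with $\la v,\beta\ra=1$ and $\la Av,\beta\ra=0$.

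Because $v\ne0$, the set $L:=\{\beta\in\KK^n\mid\la v,\beta\ra=1\}$ is a nonempty affine hyperplane with direction space $v^{\perp}:=\{\beta\mid\la v,\beta\ra=0\}$, and $(v^{\perp})^{\perp}=\KK v$. Restricting the linear form $\beta\mapsto\la Av,\beta\ra$ to $L$ yields an affine function $g$, and $g$ is non-constant precisely when $\la Av,-\ra$ does not vanish on $v^{\perp}$, i.e. precisely when $Av\notin\KK v$. If $g$ is non-constant, then, $\KK$ being infinite, $g$ attains the value $0$ on $L$, so the first alternative holds; and since $v$ is then not an eigenvector of $A$, the second alternative fails. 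If instead $Av=\lambda v$, then evaluating $g$ at any $\beta_0\in L$ gives $g(\beta_0)=\la\lambda v,\beta_0\ra=\lambda$, so $g\equiv\lambda$ on $L$; hence the first alternative holds if and only if $\lambda=0$, in which case $v$ is an eigenvector with eigenvalue $0$ and the second alternative fails, whereas if $\lambda\ne0$ the first alternative fails and the second holds. In each of these three cases exactly one of the two statements is true.

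The computations involved are routine; the one point that needs a little care is the final trichotomy, where one must keep the degenerate case $Av=0$ (equivalently $A^2x=0$) separate from the genuine nonzero-eigenvalue case, so that the dichotomy comes out exclusive as well as exhaustive.
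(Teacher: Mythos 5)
Your proof is correct. It takes a slightly different, though closely related, route from the paper's. The paper observes directly that the desired $\alpha$ exists if and only if $x \notin \ker(A) + \KK v$ (since $\im(A^T) = \ker(A)^\perp$, so $\alpha$ must annihilate $\ker(A)$ and $v$ but not $x$), and then shows separately that $x \in \ker(A) + \KK v$ is equivalent to $v$ being an eigenvector of $A$ with non-zero eigenvalue. You instead substitute $\alpha = A^T\beta$ from the outset, turning the three conditions into the pair $\la v,\beta\ra = 1$, $\la Av,\beta\ra = 0$, so that the dichotomy immediately becomes the question of whether $Av \in \KK v$; the eigenvector statement then drops out without the extra translation step. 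Both are short annihilator arguments in elementary linear algebra and buy roughly the same thing; your version keeps the eigenvector condition visible throughout, at the price of having to separate the degenerate case $Av = 0$ at the end, which you handle correctly. One minor remark: the appeal to $\KK$ being infinite is unnecessary — a non-constant affine function on an affine space of positive dimension over any field attains every value, and when $n = 1$ the function $g$ is automatically constant so the issue does not arise.
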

\begin{proof}
Note that such a vector $\alpha$ exists if and only if $x\not\in\ker(A)+\KK v$. Suppose that $x\in\ker(A)+\KK v$. Then we have $x-cv\in\ker(A)$ for some $c\in\KK\setminus\{0\}$ since $x\not\in\ker(A)$. We find that $Av=c^{-1}v$. Conversely, if $v$ is an eigenvector of $A$ with non-zero eigenvalue $\lambda\in\KK$, then $v-\lambda x\in\ker(A)$ and hence $x\in\ker(A)+\KK v$.
\end{proof}

We describe $\osec_2X_1$ in detail. In particular, we will show that this set is not closed for $n>2$ and we will also shed some light on the origin of the property $\mathscr{P}(A)$.

\begin{proposition}
A matrix $A \in 2H$ of rank $2$ lies in $\osec_2X_1$ if and only if one of
the following statements holds:
\begin{enumerate}
\item $A^2=0$;
\item $A^2\neq 0=A^3$ and $\mathscr{P}(A)$ holds;
\item $A$ is not nilpotent and $\mathscr{P}(A)$ holds; or 
\item $A$ is not nilpotent, $Az$ is an eigenvector of $A$ and $A^T\omega$ is an eigenvector of $A^T$.
\end{enumerate}
In particular, the set $\osec_2 X_1$ contains all $A\in 2H$ of rank~$2$ for which $\mathscr{P}(A)$ holds.
\end{proposition}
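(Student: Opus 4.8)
The plan is to convert the condition $A\in\osec_2X_1$ into a small system of linear equations and then run a short case analysis. Writing $A=J+(A-J)$ with $J\in X_1$, the trace of $A-J$ is forced to be $0$ (since $A\in\sl_n$ and $\tr J=0$) and $\omega^{T}(A-J)z$ is forced to be $1$ (since $A\in 2H$ and $J\in H$), so $A-J$ lies in $X_1$ automatically as soon as it has rank $<2$. Hence $A\in\osec_2X_1$ if and only if there is a rank-one $J=v\alpha^{T}\in X_1$ with $\rk(A-J)<\rk(A)=2$, which by Lemma~\ref{lm:Step1}(3) means $\alpha\in\im(A^{T})$ and $v=Ax$ for some $x$ with $\langle x,\alpha\rangle=1$. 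Substituting $v=Ax$ and writing out membership of $v\alpha^{T}$ in $X_1$ leaves the system
\[
\alpha\in\im(A^{T}),\qquad\langle x,\alpha\rangle=1,\qquad\langle Ax,\alpha\rangle=0,\qquad\omega^{T}Ax=1,\qquad\langle z,\alpha\rangle=1
\]
in the unknowns $x,\alpha\in\KK^{n}$. Two facts organize everything. First, since $\rk(A)=2$, the space $\im(A^{T})$ is $2$-dimensional and $A^{T}$-stable, and by Lemma~\ref{lm:equivP} the property $\mathscr P(A)$ is equivalent to ``$\{Az,A^{2}z\}$ is a basis of $\im(A)$ and $\{A^{T}\omega,(A^{T})^{2}\omega\}$ is a basis of $\im(A^{T})$'', so in particular $\mathscr P(A)$ forces $A^{2}\neq 0$. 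Second, the whole statement is invariant under the reflection $(A,z,\omega)\mapsto(A^{T},\omega,z)$, which I use to halve the casework.

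For the ``if'' direction I would argue case by case. Cases (2) and (3) together assert that $\mathscr P(A)$ implies $A\in\osec_2X_1$; this is the $k=2$ instance of Theorem~\ref{thm:oseckX1} (and can also be proved directly by the method below). For case (1), where $A^{2}=0$, writing $\alpha=A^{T}y$ makes $\langle Ax,\alpha\rangle=y^{T}A^{2}x=0$ automatic, so it suffices to produce $x,y$ with $y^{T}Az=1$, $y^{T}Ax=1$ and $\omega^{T}Ax=1$: one first chooses $x$ with $\omega^{T}Ax=1$ and $Ax\notin\KK\,Az$---possible because $A$ maps the hyperplane $\{\omega^{T}A(\,\cdot\,)=0\}$ onto a line in $\im(A)$ that does not contain $Az$---and then solves for $y$, using that $Az$ and $Ax$ are linearly independent. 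For case (4), where $A$ is not nilpotent and $Az$, $A^{T}\omega$ are eigenvectors of $A$ and $A^{T}$ respectively, I would write down a two-term decomposition explicitly in a basis adapted to these eigenvectors and to a nonzero eigenvalue of $A$.

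For the ``only if'' direction, note that a rank-two nilpotent matrix satisfies either $A^{2}=0$ or $A^{2}\neq0=A^{3}$, so the complement of cases (1)--(4) consists of exactly two configurations: (a) $A$ nilpotent with $A^{2}\neq0=A^{3}$ and $\neg\mathscr P(A)$; and (b) $A$ not nilpotent with exactly one of $Az$, $A^{T}\omega$ an eigenvector of $A$, $A^{T}$ (which already gives $\neg\mathscr P(A)$). Using the reflection symmetry I reduce to $A^{2}z=0$ in (a), and to ``$A^{T}\omega$ is an eigenvector of $A^{T}$ but $Az$ is not an eigenvector of $A$'' in (b). In each configuration I would assume the system above has a solution and derive a contradiction: Lemma~\ref{lm:Step2} applied to $v=Ax$ rules out $v$ being an eigenvector of $A$ with nonzero eigenvalue, the inhomogeneous equations $\omega^{T}Ax=1$ and $\langle z,\alpha\rangle=1$ pin down $Ax$ and $\alpha$ modulo the relevant kernels, the failed basis property then confines $\alpha$ to the $A^{T}$-cyclic line $\KK\,A^{T}\omega\subseteq\im(A^{T})$ (resp.\ confines $Ax$ to an eigendirection of $A$), and finally the equations $\langle Ax,\alpha\rangle=0$ and $\langle x,\alpha\rangle=1$ become incompatible---concretely, $\langle x,\alpha\rangle$ is forced to equal $\tfrac12$ rather than $1$, using $\omega^{T}Az=2$.

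The main obstacle is exactly this contradiction step, together with keeping the three ``critical'' cases (a), (b) and (4) cleanly separated: once $\mathscr P(A)$ fails, one of the $2$-dimensional subspaces $\im(A)$, $\im(A^{T})$ collapses to an $A^{T}$-cyclic line, the freedom in choosing $(x,\alpha)$ shrinks, and the delicate point is to show that this shrinkage is incompatible with the $z$- and $\omega$-normalizations in configurations (a) and (b) but not in case (4). I expect each of (a), (b) and (4) to reduce---after fixing a basis of $\im(A^{T})$ adapted to $A^{T}\omega$---to a short explicit matrix computation; arranging that these computations dovetail precisely into the stated four-case description is the part that needs care.
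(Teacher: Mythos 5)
Your framing is sound and in places cleaner than the paper's: reducing to the solvability of the linear system in $(x,\alpha)$, the reformulation via Lemma~\ref{lm:equivP} of $\mathscr{P}(A)$ as ``$\{Az,A^2z\}$ spans $\im(A)$ and $\{A^T\omega,(A^T)^2\omega\}$ spans $\im(A^T)$,'' the transpose symmetry $(A,z,\omega)\mapsto(A^T,\omega,z)$, and the complete argument for case~(1) are all correct. The paper instead conjugates $A$ into Jordan normal form (adjusting $z,\omega$ by $Pz,P^{-T}\omega$) and does three explicit coordinate computations, one per Jordan type; your case split is implicitly the same, but the symmetry observation is a genuine simplification not used in the paper.

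However, there is a circularity you cannot wave away with a parenthetical. The paper proves this proposition \emph{first} and uses its ``in particular'' clause as the base case of the induction that establishes Theorem~\ref{thm:oseckX1} for $k\geq 3$; the theorem for $k=2$ \emph{is} (part of) this proposition. So handling cases (2) and (3) by citing ``the $k=2$ instance of Theorem~\ref{thm:oseckX1}'' is circular, and the fallback ``can also be proved directly by the method below'' does not discharge it, because the method you actually spell out is tailored to $A^2=0$ (where $\langle Ax,\alpha\rangle=0$ becomes automatic after writing $\alpha=A^Ty$) and does not obviously transfer to the $A^2\neq0$ cases. You would need to carry out the analogue explicitly, working in the adapted bases $\{Az,A^2z\}$ of $\im(A)$ and $\{A^T\omega,(A^T)^2\omega\}$ of $\im(A^T)$, as the paper does; similarly case (4) is left as ``I would write down a two-term decomposition,'' which is a plan, not a proof. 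On the ``only if'' side your description of the mechanism is essentially right---in configuration (a) one does get $\langle x,\alpha\rangle$ forced to $\tfrac12$; in configuration (b) the exact shape of the contradiction is slightly different (the confinement of $\alpha$ to $\KK A^T\omega$ forces $\alpha=0$, or alternatively $q\neq0$ and $q=0$ simultaneously), but the over-determination you describe is correct---yet these also need to be written out to count as a proof.
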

\begin{proof}
Suppose that $A \in 2H$ has rank $2$. We try to find an element $J=v\alpha^T\in X_1$ such that $\rk(A-J)=1$ and investigate what can go wrong. The Jordan Normal Form of $A$ can be one of the three following matrices: 
$$
\begin{pmatrix}
0\\1&0\\&&0\\&&1&0\\&&&&0\\&&&&&\ddots\\&&&&&&0
\end{pmatrix},
\begin{pmatrix}
0\\1&0\\&1&0\\&&&0\\&&&&\ddots\\&&&&&0
\end{pmatrix},
\begin{pmatrix}
\lambda\\&-\lambda\\&&0\\&&&\ddots\\&&&&0
\end{pmatrix}
$$
Let $P\in\GL_n$ be an invertible matrix and consider the following isomorphism:
\begin{eqnarray*}
\phi\colon V&\to&V\\
B&\mapsto&PBP^{-1}
\end{eqnarray*}
We see that $\phi(X)=X$. Take $\omega':=P^{-T}\omega$ and $z':=Pz$. Then one can check that replacing $(z,\omega)$ by $(z',\omega')$ causes the subset $H$ of $V$ to be replaced by its image under $\phi$. The same holds for the subsets $X_1$ and $\osec_2X_1$. This means that we are allowed to perform basechanges as long as we adjust $z$ and $\omega$ accordingly. In particular, we may assume that the matrix $A$ is in its Jordan Normal Form.\bigskip 

The first possiblity occurs precisely when $A^2=0$. In this case, we see that $Az$ is an eigenvector of $A$ and $A^T\omega$ is an eigenvector of $A^T$. Hence $\mathscr{P}(A)$ does not hold. By Lemma~\ref{lm:Step2}, for every vector $x\in\KK^n$ such that $v=Ax \neq 0$ there exists a vector $\alpha\in\im(A^T)$ with $\la x,\alpha\ra=1$. Note that $Az\in\im(A)\setminus\{0\}$ and $\la Az,\omega\ra=\omega^TAz=2$. So since $\im(A)$ is two-dimensional, we can choose the vector $x$ such that $v, Az$ are linearly independent and $\la v,\omega\ra=1$. Now, we see that $z\not\in\ker(A)+\KK x$. So we may assume that $\la z,\alpha\ra=1$. Take $J=v\alpha^T$. Then $\la v,\alpha\ra =0$ since $v\in\ker(A)$ and $\alpha\in\im(A^T)$. So $J\in\sl_n$. We also have $\omega^TJz=\la v,\omega\ra\cdot \la z,\alpha\ra =1$. So $J\in X_1$. And finally, we have $A-J\in X_1$ by Lemma~\ref{lm:Step1}.\bigskip

The second possible Jordan Normal Form occurs precisely when $A^2\neq 0=A^3$. In this case, let $v_2,v_3$ be a basis of $\im(A)$ such that $Av_2=v_3$ and $Av_3=0$. Then $Az$ is an eigenvector of $A$ if and only if $Az\in\KK v_3$. And $A^T\omega$ is an eigenvector of $A^T$ if and only if $\la v_3,\omega\ra=0$. If $A=J_1+J_2$ with $J_1,J_2\in X_1$, then $\im(J_1)\neq\KK v_3$ or $\im(J_2)\neq\KK v_3$, because the matrix $J_1+J_2$ would have rank $\leq 1$ otherwise. So $A\in\osec_2X_1$ if and only if $\rk(A-J)=1$ for some $J\in X_1$ with $\im(J)\neq\KK v_3$. For every vector $x\in\KK^n$ such that $v=Ax\not\in\KK v_3$, there exists a vector $\alpha\in\im(A^T)$ with $\la x,\alpha\ra=1$ and $\la v,\alpha\ra=0$. Since $x$, $v$ and $\ker(A)$ span $\KK^n$, this vector $\alpha$ is unique. Note that the condition $\la z,\alpha\ra=1$ is equivelent to having $Az=v+\mu Av$ for some constant $\mu\in\KK$. Write $v=b_2v_2+b_3v_3$, $Az=c_2v_2+c_3v_3$, $\la v_2,\omega\ra=a_2$ and $\la v_3,\omega\ra=a_3$. Then we get the following equivalences:
$$\begin{array}{lll}
v\not\in\KK v_3 &\mbox{if and only if}& b_2\neq0\\
\la v,\omega\ra=1 &\mbox{if and only if}& a_2b_2+a_3b_3=1\\
\la z,\alpha\ra=1 &\mbox{if and only if}& b_2=c_2 \mbox{ and } b_3+\mu b_2=c_3 \mbox{ for some } \mu\in\KK
\end{array}$$
We also have $2=\omega^TAz=\la Az,\omega\ra =a_2c_2+a_3c_3$. There is a triple $(b_2,b_3,\mu)$ satisfying the conditions on the right if and only if $c_2\neq0$ and $a_3\neq0$. And this happens precisely when $\mathscr{P}(A)$ holds. So we find that there is a $J\in X_1$ such that $A-J\in X_1$ if and only if $\mathscr{P}(A)$ holds.\bigskip

The final possible Jordan Normal Form occurs precisely when $A$ is not nilpotent. In this case $A$ is diagonalisable. Let $v_1$ be an eigenvector of $A$ with eigenvalue $\lambda\neq0$ and let $v_2$ be an eigenvector with eigenvalue~$-\lambda$. Then $Az$ is an eigenvector of $A$ if and only if $Az$ is contained in $\KK v_1\cup\KK v_2$. Let $\omega_1$ be an eigenvector of $A^T$ with eigenvalue $\lambda$ and let $\omega_2$ be an eigenvector with eigenvalue~$-\lambda$. Then $A^T\omega$ is an eigenvector of $A^T$ if and only if $A^T\omega$ is contained in $\KK \omega_1\cup\KK \omega_2$. We have $\la v_1,\omega_2\ra=\la v_2,\omega_1\ra=0$ and $\la v_1,\omega_1\ra,\la v_2,\omega_2\ra\neq0$. This means that $A^T\omega$ is an eigenvector of $A^T$ if and only if $\la v_1,\omega\ra=0$ or $\la v_2,\omega\ra=0$. Take $x\in\KK^n$ such that $v=Ax\neq0$. Then by Lemma~\ref{lm:Step2} there exists a vector $\alpha\in\im(A^T)$ such that $\la x,\alpha\ra=1$ and $\la v,\alpha\ra=0$ if and only if $v\not\in\KK v_1\cup\KK v_2$. When this is the case, note that $\KK^n$ is spanned by $x$, $v$ and $\ker(A)$ and that therefore $\alpha$ must be unique. We write $v=b_1v_1+b_2v_2$, $Az=c_1v_1+c_2v_2$, $\la v_1,\omega\ra=a_1$ and $\la v_2,\omega\ra=a_2$ and get the following equivalences:
$$\begin{array}{lll}
v\not\in\KK v_1+\KK v_2 &\mbox{if and only if}& b_1b_2\neq0\\
\la v,\omega\ra=1 &\mbox{if and only if}& a_1b_1+a_2b_2=1\\
\la z,\alpha\ra=1 &\mbox{if and only if}& b_1(1+\lambda\mu)=c_1 \mbox{ and } b_2(1-\lambda\mu)=c_2 \mbox{ for some } \mu\in\KK
\end{array}$$
We also have $a_1c_1+a_2c_2=2$. So $a_1c_1$ and $a_2c_2$ cannot both be zero. One can readily check that a solution $(b_1,b_2,c)$ for the conditions on the right exists when $a_1=c_1=0$, $a_2=c_2=0$ or $a_1a_2c_1c_2\neq0$. Conversely, assuming a solution exists, it is easy to check that $a_1=0$ if and only if $c_1=0$ and $a_2=0$ if and only if $c_2=0$. So we see that there exists a $J\in X_1$ such that $A-J\in X_1$ if and only if $a_1=c_1=0$, $a_2=c_2=0$ or $a_1a_2c_1c_2\neq0$. In the first two cases, $Az$ is an eigenvector of $A$ and $A^T\omega$ is an eigenvector of $A^T$. In the last case, the property $\mathscr{P}(A)$ holds.
\end{proof}

To reduce Theorem \ref{thm:oseckX1} to the case where $k=2$, we use the following lemma.

\begin{lemma} \label{lm:Choice}
Assume that $k\geq3$, let $W$ be a $k$-dimensional vector space, let $B\in\gl(W)$ be a matrix, let $w\in W\setminus\{0\}$ be a vector that is not an eigenvector of $B$ and let $W'$ be a subspace of $W$ complementary to $\KK w$. Then the set
$$
U=\{u\in W'\setminus\{0\}\mid B(u+w)\not\in\span(u,w)\}
$$
is open and dense in $W'$.
\end{lemma}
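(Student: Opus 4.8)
The plan is to collapse the whole statement into the claim that one explicit quadratic map on $W'$ is not identically zero. Fix the decomposition $W=\KK w\oplus W'$ and let $q\colon W\to W'$ be the projection with kernel $\KK w$; set $b':=q(Bw)\in W'$ and $B':=q\circ(B|_{W'})\colon W'\to W'$. For $u\in W'\setminus\{0\}$ the vectors $u$ and $w$ are linearly independent, so $\span(u,w)=\KK u\oplus\KK w$; since the $\KK w$-component of $B(u+w)$ can always be matched (we are free to add any multiple of $w$), the membership $B(u+w)\in\span(u,w)$ is equivalent to $q(B(u+w))\in\KK u$, that is, to $B'(u)+b'\in\KK u$. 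The only use of the hypothesis is here: ``$w$ is not an eigenvector of $B$'' says exactly that $Bw\notin\KK w$, i.e.\ that $b'\neq 0$.

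Next I would introduce $\Phi\colon W'\to\bigwedge^{2}W'$, $\Phi(u):=u\wedge\bigl(B'(u)+b'\bigr)=u\wedge B'(u)+u\wedge b'$. By the previous paragraph $U=\{u\in W'\mid\Phi(u)\neq 0\}$, where the constraint $u\neq 0$ is automatic because $\Phi(0)=0$. Fixing bases, every coordinate of $\Phi$ is a polynomial in $u$ (of degree $\leq 2$), so $W'\setminus U$ is the common zero locus of finitely many polynomials, hence Zariski closed; this gives openness of $U$ at once. As $W'$ is an affine space, and thus irreducible, it remains only to show $U\neq\emptyset$, i.e.\ $\Phi\not\equiv 0$.

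For that, write $\Phi=Q+L$ with $Q(u)=u\wedge B'(u)$ homogeneous of degree $2$ and $L(u)=u\wedge b'$ linear. If $\Phi\equiv 0$, then $\Phi(tu)=t^{2}Q(u)+tL(u)=0$ for all scalars $t$ and all $u$ forces both components to vanish, in particular $L\equiv 0$, i.e.\ $u\wedge b'=0$ for every $u\in W'$. But $\dim W'=k-1\geq 2$, so we may pick $u\in W'$ with $u\notin\KK b'$, and then $u\wedge b'\neq 0$, a contradiction. Hence $\Phi\not\equiv 0$, so $U$ is a nonempty Zariski-open subset of the irreducible variety $W'$ and is therefore dense.

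I do not expect a genuine obstacle: every computation is routine. The only points that need a little care are getting the equivalence $B(u+w)\in\span(u,w)\Leftrightarrow B'(u)+b'\in\KK u$ exactly right, and observing that ``$w$ not an eigenvector'' is literally $b'\neq 0$; after that, density of $U$ reduces to the triviality that a degree-$\leq 2$ polynomial map with nonzero linear part is nonzero, which needs nothing more than $\dim W'\geq 2$. If one wishes to avoid exterior powers, one can replace $\Phi$ by the tuple of $2\times 2$ minors of the $2\times(k-1)$ matrix with rows $u$ and $B'(u)+b'$ in a fixed basis of $W'$; each such minor has degree $\leq 2$ in $u$, and the same homogeneity argument shows they cannot all vanish identically.
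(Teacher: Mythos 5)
Your proof is correct and takes essentially the same route as the paper: both reduce density to showing $U$ is a nonempty Zariski-open subset of the irreducible affine space $W'$, project everything onto $W'$ to turn the hypothesis into $b':=q(Bw)\neq 0$, and invoke $\dim W'=k-1\geq 2$. The only difference is cosmetic—the paper exhibits a concrete near-witness (fix $u$ with $u,b'$ independent, then $tu\in U$ for all but at most one $t$), while you show the quadratic map $\Phi$ is nonzero by isolating its linear part $u\mapsto u\wedge b'$—but these are the same observation in two packagings.
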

\begin{proof}
The complement of $U$ in $W'$ is defined by the vanishing of all $3\times 3$ minors of the matrix with columns $B(u+w)$, $u$ and $w$. Hence $U$ is open. To show that $U$ is also dense, it suffices to show that $U$ is non-empty. Let $x\in W'\setminus\{0\}$ be the projection of $Bw$ on $W'$ along $\KK w$. As $\dim(W')\geq 2$, we can choose a vector $u\in W'$ that is linearly independent of $x$. Now the affine line $B(\KK u+w)$ hits the plane $\span(u,w)$ precisely when its projection on $W'$ along $\KK w$ hits the line $\KK u$. Since $x\not\in\KK u$ lies on this projection, this happens at most once. So for almost all $t\in\KK\setminus\{0\}$, the vector $tu$ lies in $U$.
\end{proof}

\begin{proof}[Proof of Theorem \ref{thm:oseckX1} for $k\geq 3$.]
Take $k \geq 3$ and consider a matrix $A \in kH$ of rank $k$ such that neither $Az$ is an eigenvector of $A$ nor $A^T\omega$ is an eigenvector of $A^T$. We will construct a matrix $J\in X_1$ such that $A-J\in(k-1)H\cap\sigma_{k-1}X$, $(A-J)z$ is not an eigenvector of $A-J$ and $(A-J)^T\omega$ is not an eigenvector of $(A-J)^T$.\bigskip

Set $S=\{x'\in\KK^n\mid\la Ax',\omega\ra=0\}$ and $\Sigma:=\{\beta'\in\KK^n\mid\la z,A^T\beta'\ra=0\}$ and consider the affine algebraic variety $Q\subseteq(z/k+S)\times(\omega/k+\Sigma)$ defined by
$$
Q:=\{(x,\beta)\in(z/k+S) \times (\omega/k+\Sigma)\mid\la x,A^T\beta\ra=1,\la Ax,A^T\beta\ra=0\}.
$$
Any element $(x,\beta)\in(z/k+S)\times(\omega/k+\Sigma)$ satisfies
$$
\la Ax,\omega\ra=\la z,A^T\beta\ra=1
$$
and, together with the equations defining $Q$, this implies that for all $(x,\beta)\in Q$ the matrix $J:=Ax\cdot (A^T\beta)^T$ lies in $X_1$ and satisfies $\rk(A-J)<\rk(A)$. Moreover, any $J\in X_1$ with this property is of
this form by Lemma \ref{lm:Step1}.\bigskip

Now let $\pi_1$ and $\pi_2$ denote the projections from $Q$ on $z/k+S$ and $\omega/k+\Sigma$. We claim that $\pi_1(Q)$ is dense in $z/k+S$. Indeed, applying Lemma \ref{lm:Choice} where $W=\KK^n/\ker(A)$, the matrix $B$ is induced by $A$, $w=z/k+\ker(A)$ and $W'=S/\ker(A)$, we see that the set
$$
U=\{u\in W'\setminus\{0\}\mid B(u+w)\not\in\span(u,w)\}
$$
is open and dense in $W$. By taking the preimage of $U$ in $S$ and translating by $z/k$, we see therefore that the set
$$
\{x\in(z/k+S)\setminus(z/k+\ker(A))\mid Ax\not\in\span(x,z)+\ker(A)\}
$$
is open and dense in $z/k+S$. Let $x$ be an element of this set. Then the vectors $x,Ax,z$ are linearly independent modulo $\ker(A)$. So there exists a vector $\alpha\in\KK^n$ such that
$$\begin{array}{ccccc}
\la \ker(A),\alpha\ra&=&\la Ax,\alpha\ra&=&0\\
\la x,\alpha\ra&=&\la z,\alpha\ra&=&1
\end{array}$$
and we see that $\alpha=A^T\beta$ must hold for some $\beta\in\omega/k+\Sigma$ such that $(x,\beta)\in Q$. So $\pi_1(Q)$ is dense in $z/k+S$. One can similarly prove that $\pi_2(Q)$ is dense in $\omega/k+\Sigma$. This shows the existence of the matrix $J \in X_1$ such that $\rk(A-J)<\rk(A)$.\bigskip

Next we must take care of the condition that $(A-J)z$ is not an eigenvector of $A-J$ and $(A-J)^T\omega$ is not an eigenvector of $(A-J)^T$. One readily checks that for $J:=Ax\cdot (A^T\beta)^T$ with $(x,\beta)\in Q$ we have
\begin{eqnarray*}
\ker(A-J)&=&\ker(A)\oplus\KK x\\
\ker((A-J)^T)&=&\ker(A^T)\oplus\KK \beta
\end{eqnarray*}
So $(A-J)z$ is not an eigenvector of $A-J$ precisely when $(A-J)z\not\in\KK z+\KK x+\ker(A)$. And, $(A-J)^T\omega$ is not an eigenvector of $(A-J)^T$ precisely when $(A-J)^T\omega\not\in\KK\omega+\KK \beta+\ker(A^T)$.\bigskip

It is easy to meet one of these two conditions: since $\la z,A^T\beta\ra=1$, we have $Jz=Ax$.  Denote the image of a vector $u\in\KK^n$ in $\KK^n/\ker(A)$ by $\ol{u}$. Then the set
$$
\left\{x \in z/k+S ~\middle|~ \ol{x}, \ol{Ax}, \ol{z} \text{ are linearly independent},~ \ol{A(z-x)} \not \in \span(\ol{z},\ol{x})\right\}
$$
is open and dense in $z/k+S$ by two applications of Lemma \ref{lm:Choice} with $W$, $B$ and $W'$ as before and with $w\in\{z/k,-z(k-1)/k\}$. Also, as before, this set is contained in $\pi_1(Q)$. Let $Q_1$ be its pre-image in $Q$. It is tempting to do the same for $\omega$ and to claim that the two open subsets of $Q$ thus obtained must intersect. However, it is not clear whether $Q$ is an irreducible algebraic variety. So we proceed slightly more carefully.\bigskip

From now on, denote the image of the vector $\gamma\in\KK^n$ in $\KK^n/\ker(A^T)$ by $[\gamma]$ . Let $(x,\beta)$ be a point in $Q_1$. Then the tangent space $T_{(x,\beta)}Q_1$ consists of all $(x',\beta')\in S\times\Sigma$ such that 
\begin{eqnarray*}
\la x',A^T\beta\ra &=& -\la Ax,\beta'\ra\\
\la x',(A^T)^2\beta\ra &=& -\la A^2 x,\beta' \ra
\end{eqnarray*}
We claim that, if $[\beta], [A^T\beta], [\omega]$ are linearly independent, then the differential $d_{(x,\beta)} \pi_2$ is surjective. Indeed, let $\beta'\in\Sigma$. Then $(x',\beta')$ lies in $T_{(x,\beta)}Q_1$ if and only if $x'\in\KK^n$ satisfies the two equations above and $\la x',A^T\omega\ra=0$. As $A^T\beta,(A^T)^2\beta, A^T\omega$ are linearly independent, there exists a solution $x'$ to this system of linear equations. This proves the claim. We conclude that if $Q_1$ contains a point $(x,\beta)$ where $[\beta],[A^T\beta], [z]$ are linearly independent, then $\pi_2(Q_1)$ contains an open dense subset of $\omega/k+\Sigma$. This subset intersects the open dense subset where $[A^T(\omega-\beta)]\not\in\span([\omega],[\beta])$ and so we are done.\bigskip

Hence assume that all points $(x,\beta)\in Q_1$ have the following two properties: 
\begin{itemize}
\item The vectors $[\beta],[A^T\beta],[\omega]$ are linearly dependent.
\item The vector $[A^T(\omega-\beta)]$ is contained in $\span([\omega],[\beta])$.
\end{itemize} 
We derive a contradiction as follows:  for all $(x,\beta)\in Q_1$, we have 
$$
[\beta]\not\in\span\left([A^T\beta],[\omega-\beta]\right)
$$
since $\la Ax,A^T\beta\ra=\la Ax,\omega-\beta\ra=0$ and $\la Ax,\beta\ra=1$. So the linear dependence of $[\beta], [A^T\beta], [\omega]$ implies that $[A^T\beta]\in\KK[\omega-\beta]$. Together with the second assumption this implies that $[A^T\omega]\in\span([\omega],[\beta])$. Denote $\pi_2$ followed by the projection $\KK^n\to\KK^n/\ker(A^T)$ by $\varphi$. Then the map $\varphi$ sends $Q_1$ into the affine line $\ell=[\omega]/k+\KK[\gamma]$, where $\gamma$ is the projection of $A^T\omega$ on $\Sigma$ along $\KK\omega$. We claim that the differential of the map $\varphi\colon Q_1\to\ell$ is non-zero, and hence surjective, at any point $(x,\beta)\in Q_1$. Indeed, the set $\la S, A^T\beta\ra$ is not equal to $0$, because otherwise $A^T\beta$ would be a scalar multiple of $A^T\omega$ and this contradicts the linear independence of $[\beta]$ and $[\omega]$. So we may choose $x'\in S$ such that $\la x',A^T\beta\ra\neq0$. Given this $x'$, we must find a $\beta'\in\KK^n$ such that $\la x',A^T\beta\ra = -\la Ax,\beta'\ra$, $\la x',(A^T)^2\beta\ra = -\la A^2 x,\beta' \ra$ and $\la Az,\beta' \ra=0$. This is possible because $Ax, A^2x$, and $Az$ are linearly independent. The resulting $\beta'$ is clearly not an element of $\ker(A^T)$ and hence $[\beta']$ is a non-zero element of the image of $d_{(x,\beta)}\varphi$. We conclude that the image of $Q_1$ in $\ell$ contains an open dense set. On the other hand, as $[\omega]/k\in\ell$ is not an eigenvector of $A^T$ modulo $\ker(A^T)$, we have $[A^T\beta]\not\in\KK[\omega-\beta]$ for $[\beta]$ in an open dense subset of $\ell$ (the complement of which is characterised by the vanishing of $2\times2$ minors of the matrix with columns $[A^T\beta],[\omega-\beta]$). This open set must intersect $\varphi(Q_1)$, a contradiction.
\end{proof}

From now on, we take $z=e_n$ and $\omega=e_1$. So $H$ is the affine subspace consisting of all matrices $A\in V$ with a $1$ in their top-right corner and we have
$$
X_1=\{v\alpha^T\mid v,\alpha\in\KK^n, \la v,\alpha\ra=0, \la v,e_1\ra=\la e_n,\alpha\ra=1\}.
$$
We will prove the following theorem.

\begin{theorem}\label{thm:mainsln}
We have $nH=\osec_nX_1$.
\end{theorem}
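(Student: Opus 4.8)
The plan is to prove the non-trivial inclusion $nH\subseteq\osec_nX_1$ (the reverse inclusion being immediate). Given $A\in nH$, I want to write it as a sum of $n$ elements of $X_1$. Two cases are immediate. If $\rk(A)=1$, then $A=v\alpha^{T}$ with $\langle v,\alpha\rangle=0$ and $\langle v,\omega\rangle\langle z,\alpha\rangle=n$, so after rescaling $v,\alpha$ we obtain $A/n\in X_1$ and $A=n\cdot(A/n)\in\osec_nX_1$. If $\rk(A)=n$ and $\mathscr{P}(A)$ holds, then $A\in\osec_nX_1$ directly by Theorem~\ref{thm:oseckX1}. All remaining $A$ will be reduced to these by repeatedly peeling off a single element of $X_1$, each peeling lowering by one the ``level'' of the partial remainder (the integer $k$ with remainder in $kH$); since the rank-one observation above produces exactly $k$ summands from a rank-one element of $kH$, it suffices to reach, after some number $n-k$ of peelings, a rank-one remainder in $kH$.

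The moves come from Lemmas~\ref{lm:Step1} and~\ref{lm:Step2}: for a partial remainder $C\in kH$ with $2\le k\le n$, any $J=v\alpha^{T}\in X_1$ with $\langle v,\omega\rangle=\langle z,\alpha\rangle=1$ satisfies $\omega^{T}(C-J)z=k-1$, hence $C-J\in(k-1)H$, and by Lemma~\ref{lm:Step1} the rank of $C-J$ equals that of $C$ when $v\in\im C$ but the aligned situation $\alpha\in\im C^{T}$, $v=Cx$, $\langle x,\alpha\rangle=1$ is avoided, and drops by exactly one precisely in that aligned situation --- the aligned $\alpha$ being furnished by Lemma~\ref{lm:Step2} as soon as $v=Cx$ is not an eigenvector of $C$. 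For $A\in nH$ with $\rk(A)=r$ and $2\le r\le n-1$ one first performs, if $\mathscr{P}(A)$ fails, a single rank-preserving peeling landing on a matrix with property $\mathscr{P}$ (the level is still at least $2$), and then a run of rank-lowering peelings --- produced as in the rank-lowering step of the proof of Theorem~\ref{thm:oseckX1} (which, being built from Lemmas~\ref{lm:Step1}--\ref{lm:Choice}, does not require rank and level to agree) and preserving $\mathscr{P}$ --- reaching a rank-one remainder that the initial observation finishes off. For $A\in nH$ with $\rk(A)=n$ and $\mathscr{P}(A)$ false there is no such slack: here the whole difficulty concentrates in producing, in a single step, a $J\in X_1$ with $\rk(A-J)=n-1$ and $\mathscr{P}(A-J)$ true, after which Theorem~\ref{thm:oseckX1} applies at level $n-1\ge 2$. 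The base case $n\le 2$ is clear, since for $n=2$ property $\mathscr{P}$ holds automatically on rank-two elements of $2H$ --- $Az$ being an eigenvector of $A$ would force $\det A=0$.

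The crux, which I expect to be the main obstacle, is guaranteeing that these well-chosen elements of $X_1$ exist. At each stage the candidate $J$ ranges over a positive-dimensional family, and the undesirable behaviour --- the alignment degenerating, or $\mathscr{P}$ being lost --- is cut out by the vanishing of suitable $2\times2$ and $3\times3$ minors formed from $Cz$, $C^{T}\omega$ and their images under $C$, $C^{T}$; one must verify this locus is proper, a density argument in the style of Lemma~\ref{lm:Choice}, while settling the genuinely degenerate configurations by hand. The hardest instance is $\rk(A)=n$ with $\mathscr{P}(A)$ false: then $Az$ (say) is an eigenvector of $A$, a situation one unravels via Lemma~\ref{lm:equivP}, and the rank-lowering construction of Theorem~\ref{thm:oseckX1} is not available verbatim because it assumed $\mathscr{P}(A)$; one needs a dedicated argument --- splitting according to whether exactly one or both of $Az$, $A^{T}\omega$ are eigenvectors, and clearing a possible residual obstruction as in the $Q_1$-analysis of that proof --- to exhibit a single aligned $J\in X_1$ that both drops the rank to $n-1$ and makes $\mathscr{P}(A-J)$ hold. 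With that in hand, every $A\in nH$ is reached, which gives $nH=\osec_nX_1$.
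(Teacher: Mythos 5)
Your strategy differs from the paper's, and it contains both a fundamental misreading of where the difficulty lies and several unjustified steps.

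The most important error is that the case you single out as hardest --- $\rk(A)=n$ with $\mathscr{P}(A)$ false --- is vacuous. If $\rk(A)=n$ then $\ker(A)=0$, so by Lemma~\ref{lm:equivP} the vector $Az$ is an eigenvector of $A$ if and only if $Az\in\KK z$. But $\la Az,\omega\ra=\omega^TAz=n\neq0$ while $\la z,\omega\ra=\la e_n,e_1\ra=0$, so $Az\notin\KK z$; symmetrically $A^T\omega$ is not an eigenvector of $A^T$. Hence $\mathscr{P}(A)$ holds for \emph{every} $A\in nH$ of full rank, and Theorem~\ref{thm:oseckX1} applies directly. (This is precisely where $\la z,\omega\ra=0$, i.e., $h$ being a highest weight vector, enters; with $z=\omega=e_1$ it fails, as the remark after Theorem~\ref{thm:oseckX1} shows.) You notice this for $n=2$ but with flawed reasoning --- ``$Az$ being an eigenvector would force $\det A=0$'' is false; it rather forces $\la Az,\omega\ra=0$ --- and you do not realize it holds for all $n$. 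The effort you earmark for this phantom case should instead be directed at $\rk(A)=n-1$, which the paper handles with the dedicated Lemma~\ref{lm:nmin1}.

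Beyond that, two steps in your reduction are unjustified. First, you assert that a single rank-preserving peeling achieves $\mathscr{P}$. The paper's Lemma~\ref{lm:HalfP1} only makes $(A-J)e_n$ a non-eigenvector, i.e., half of $\mathscr{P}$; completing $\mathscr{P}$ requires Lemma~\ref{lm:HalfP2}, which needs level $\leq n-1$, so a second peeling. No lemma in the paper furnishes a one-step rank-preserving $J$ achieving full $\mathscr{P}$ at level $n$, and for $\rk(A)=n-1$ this is genuinely delicate --- exactly the content of Lemma~\ref{lm:nmin1}. Second, you want to lower rank all the way to $1$ by ``the rank-lowering step of Theorem~\ref{thm:oseckX1}'', but that step relies on Lemma~\ref{lm:Choice} with $W=\KK^n/\ker(A)$, which needs $\dim W=\rk(A)\geq3$. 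Passing from rank $2$ to rank $1$ needs a separate argument, and the paper's $\osec_2X_1$ analysis is only carried out at level $2$, not at the higher level your scheme produces. The paper instead keeps the rank constant, lowers the level via Lemmas~\ref{lm:HalfP1}--\ref{lm:HalfP2} until rank equals level, and then hands off to Theorem~\ref{thm:oseckX1} exactly as stated --- this avoids having to extend the rank-lowering machinery to the regime $\rk<\text{level}$. Your observation that a rank-$1$ element of $nH$ rescales to $n$ copies of a single $X_1$-element is correct and does cleanly handle a trivial case that the paper's case split ($\rk(A)\geq2$) leaves implicit.
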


Let $A\in nH$ be a matrix of rank $n$. Then $\ker(A)=0$ and we see that $Ae_n\not\in\KK e_n$ and $A^Te_1\not\in\KK e_1$ since $\la Ae_n,e_1\ra=n$. So $\mathscr{P}(A)$ holds by Lemma~\ref{lm:equivP} and $A\in\osec_nX_1$ by Theorem~\ref{thm:oseckX1}. This leaves the matrices $A\in nH$ of rank $\leq n-1$. The next three lemmas show that, roughly speaking, we can substract elements of $X_1$ from such matrices to get a matrix satisfying the property $\mathscr{P}$.

\begin{lemma} \label{lm:HalfP1}
Assume that $3 \leq k \leq n$ and let $A \in kH$ be a matrix with $2\leq\rk(A)\leq k-1$. Then
there exists a matrix $J \in X_1$ with $\rk(A-J)=\rk(A)$ such that $(A-J)e_n$ is not an eigenvector of $A-J$.
\end{lemma}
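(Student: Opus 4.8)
The plan is to produce $J=v\alpha^T\in X_1$ of the form $v=Ax$, $\alpha\in\im(A^T)$ (so that $\rk(A-J)\le\rk(A)$ by Lemma \ref{lm:Step1}(2)), arranged so that equality of ranks holds and so that $(A-J)e_n$ fails to be an eigenvector of $A-J$. First I would record the crucial formula for the kernel of the modified matrix: when $J=Ax\cdot(A^T\beta)^T$ with $\la x,A^T\beta\ra\ne0$, one has $(A-J)z=(A-J)e_n=Ae_n-\la e_n,A^T\beta\ra\,Ax=A(e_n-\la A^Te_n,\beta\ra\,x)$ up to normalisation, and (as in the proof of Theorem \ref{thm:oseckX1} for $k\ge3$) $\ker(A-J)=\ker(A)\oplus\KK x$ provided $\rk(A-J)=\rk(A)$. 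Hence, by Lemma \ref{lm:equivP} applied to $A-J$, the vector $(A-J)e_n$ is \emph{not} an eigenvector of $A-J$ exactly when $(A-J)e_n\notin\KK e_n+\KK x+\ker(A)$, i.e.\ when the image $\ol{(A-J)e_n}$ in $\KK^n/\ker(A)$ avoids $\span(\ol{e_n},\ol{x})$. So the target condition is a condition purely about the action of $A$ on the plane $\span(\ol{e_n},\ol x)$ in $W:=\KK^n/\ker(A)$.

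Next I would set up the parameter space for $x$ exactly as in the $k\ge3$ case: let $S=\{x'\mid\la Ax',e_1\ra=0\}$, so any $x\in e_n/k+S$ automatically gives $\la Ax,e_1\ra=\la v,\omega\ra=1$; and for such $x$ with $\ol x,\ol{Ax},\ol{e_n}$ linearly independent in $W$, there is a (unique) $\alpha\in\im(A^T)$ with $\la\ker(A),\alpha\ra=\la Ax,\alpha\ra=0$ and $\la x,\alpha\ra=\la e_n,\alpha\ra=1$, which packages $J=v\alpha^T$ into $X_1$ with $\rk(A-J)<\rk(A)+1$; since $\rk(A)\ge2$ and the kernel only grows by $\KK x$, one in fact has $\rk(A-J)=\rk(A)$ here (this is where the hypothesis $\rk(A)\ge2$, equivalently $\dim W\ge2$, and $k\ge3$ giving enough room in $S$, are used). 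The two linear-independence requirements ($\ol x,\ol{Ax},\ol{e_n}$ independent, and the eigenvector-avoidance $\ol{(A-J)e_n}\notin\span(\ol{e_n},\ol x)$) are each an open condition on $x\in e_n/k+S$, so it remains to show they are simultaneously satisfiable.

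The main obstacle, and the heart of the argument, is nonemptiness: showing that these open conditions on $x\in e_n/k+S$ are not vacuous. The clean way is to invoke Lemma \ref{lm:Choice} with $W=\KK^n/\ker(A)$, $B$ the endomorphism induced by $A$, $W'=S/\ker(A)$, and the distinguished vector $w$; here one needs $w$ not to be an eigenvector of $B$, which holds for $w=\ol{e_n}$ precisely because $\la Ae_n,e_1\ra=k\ne0$ forces $Ae_n\notin\KK e_n+\ker(A)$, i.e.\ $\ol{e_n}$ is not an eigenvector by Lemma \ref{lm:equivP}. Applying Lemma \ref{lm:Choice} (possibly twice, once with $w=\ol{e_n}/k$ and once with the shifted vector corresponding to $A(e_n-\la A^Te_n,\beta\ra x)$, exactly as in the proof of Theorem \ref{thm:oseckX1}) yields an open dense set of admissible $x$, and since $\dim W=\rk(A)\ge2$ this set is nonempty. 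A mild subtlety to check is that the normalising scalar $\la e_n,A^T\beta\ra=\la Ae_n,\beta\ra$ entering $(A-J)e_n$ is nonzero on a dense set, or alternatively that the avoidance condition is insensitive to it; both follow from the linear-independence already imposed. Picking any $x$ in the resulting open dense subset and the corresponding $\alpha$ produces the desired $J$.
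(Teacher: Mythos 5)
Your proposal goes wrong at the very first structural decision: you set $v=Ax\in\im(A)$ and choose $\alpha\in\im(A^T)$ with $\la x,\alpha\ra=1$. By Lemma~\ref{lm:Step1}(3) this is \emph{exactly} the condition that forces $\rk(A-J)<\rk(A)$ --- indeed you yourself write $\ker(A-J)=\ker(A)\oplus\KK x$, which gives $\rk(A-J)=\rk(A)-1$, contradicting the claim ``$\rk(A-J)=\rk(A)$'' that you assert two lines later. You have imported the setup from the proof of Theorem~\ref{thm:oseckX1}, where the goal is to \emph{lower} the rank by one, and tried to use it for a lemma where the rank must stay put. That cannot work.

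The paper's proof makes the opposite choice: it picks $v\notin\im(A)$ (together with $\alpha\in\im(A^T)$), so Lemma~\ref{lm:Step1}(2) bounds the rank from above while Lemma~\ref{lm:Step1}(3) forbids a drop, giving $\rk(A-J)=\rk(A)$ and, moreover, $\ker(A-J)=\ker(A)$. This is precisely where the hypothesis $\rk(A)\leq k-1\leq n-1$ earns its keep: it makes $\im(A)$ a proper subspace of $\KK^n$ so that $v$ can be chosen outside it. Your argument never uses $\rk(A)\leq k-1$, which should have been a red flag. Concretely, the paper sets $S'=\{v\mid\la v,e_1\ra=1\}$, picks $v\in S'$ avoiding the union $\im(A)\cup\{v\mid Av\in\KK Ae_n\}\cup(Ae_n+\ker(A)+\KK e_n)$ by a simple codimension count (no appeal to Lemma~\ref{lm:Choice} is needed), solves a $3\times 3$ linear system for $\beta$, and takes $J=v(A^T\beta)^T$; then $(A-J)e_n=Ae_n-v\notin\ker(A)+\KK e_n=\ker(A-J)+\KK e_n$, and Lemma~\ref{lm:equivP}(3) finishes. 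To repair your proof you would have to redo it with $v\notin\im(A)$; the rest of your open-density scaffolding is heavier than what the lemma requires.
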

\begin{proof}
The affine space $S:=\{v\in\KK^n\mid\la v, e_1\ra=1\}$ of codimension $1$ in $\KK^n$ is not contained in 
$$
\im(A)\cup\{v\in\KK^n\mid Av\in\KK Ae_n\}\cup (Ae_n+\ker(A)+\KK e_n)
$$
since the first two of these three sets are subspaces of $\KK^n$ of codimension $\geq 1$ and the last is an affine subspace of codimension $\geq 1$ in $\KK^n$ that is not identical to $S$ (as $Ae_n\not\in S$). Let $v\in S$ be an element outside these three spaces. Then $v$, $Av$, and $Ae_n$ are linearly independent. So there is a $\beta\in\KK^n$ such that $\la v,\beta\ra=1$, $\la Av,\beta\ra=0$ and $\la Ae_n,\beta\ra=1$. Now take $\alpha=A^T\beta$ and $J=v\alpha^T$. Then we have $J\in X_1$, we have $\rk(A-J)=\rk(A)$ by Lemma~\ref{lm:Step1} and we have
$$
(A-J)e_n=Ae_n-v\not\in\ker(A)+\KK e_n=\ker(A-J)+\KK e_n
$$
as required. Here we use that $\alpha\in\im(A^T)$ and hence $\ker(J)\supseteq\ker(A)$.
\end{proof}

\begin{lemma} \label{lm:HalfP2}
Assume that $n\geq 3$ and $3\leq k\leq n-1$. Let $A \in kH$ be a matrix with $2\leq\rk(A)\leq k-1$ such that $Ae_n$ is not an eigenvector of $A$. Then there exists a matrix $J \in X_1$ with $\rk(A-J)=\rk(A)$ such that $\mathscr{P}(A-J)$ holds.
\end{lemma}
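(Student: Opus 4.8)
The plan is to imitate the construction in the proof of Lemma~\ref{lm:HalfP1}, exploiting the fact that $k\le n-1$ forces $\rk(A)\le k-1\le n-2$, hence $\dim\ker(A)\ge2$ and $\dim\ker(A^T)\ge2$. The guiding observation is that if we look for $J=v\alpha^T\in X_1$ with $v\in\ker(A)$ and $\alpha\in\im(A^T)$, then by Lemma~\ref{lm:Step1} we will have $\rk(A-J)=\rk(A)$ and $\ker(A-J)=\ker(A)$, while $(A-J)e_n=Ae_n-v$ differs from $Ae_n$ by the element $v\in\ker(A)=\ker(A-J)$; by Lemma~\ref{lm:equivP} this means $(A-J)e_n$ is an eigenvector of $A-J$ if and only if $Ae_n$ is an eigenvector of $A$, which is excluded by hypothesis. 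Thus the ``$e_n$-half'' of $\mathscr{P}(A-J)$ comes for free, \emph{and it holds for every admissible choice of $\alpha$}; this is exactly where the hypothesis on $Ae_n$ is used.

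So first I would pick $v$ in the affine set $\ker(A)\cap S$, with $S=\{w\mid\la w,e_1\ra=1\}$, outside the subspaces $\im(A)$ and $\KK Ae_n$. In the generic situation this is possible, because $\ker(A)\cap S$ is then a positive-dimensional affine space not contained in either of those subspaces. Next I would choose $\alpha\in\im(A^T)$ with $\la e_n,\alpha\ra=1$ (observing that $\la v,\alpha\ra=0$ holds automatically, since $\im(A^T)$ and $\ker(A)$ are orthogonal for $\la-,-\ra$) and write $\alpha=A^T\beta$ with $\la v,\beta\ra=1$; then $J:=v\alpha^T\in X_1$ and, exactly as in Lemma~\ref{lm:HalfP1}, $\rk(A-J)=\rk(A)$, $\ker(A-J)=\ker(A)$, $(A-J)e_n$ is not an eigenvector of $A-J$, and moreover
\[
\ker\bigl((A-J)^T\bigr)=\KK\beta\oplus\bigl(\ker(A^T)\cap v^{\perp}\bigr),\qquad (A-J)^Te_1=A^T(e_1-\beta).
\]
What is then left is to choose $\alpha$ (equivalently $\beta$), refining $v$ if necessary, so that
\[
A^T(e_1-\beta)\notin\KK e_1+\KK\beta+\bigl(\ker(A^T)\cap v^{\perp}\bigr),
\]
i.e., so that $(A-J)^Te_1$ is not an eigenvector of $(A-J)^T$.

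This last step is the main obstacle. Since the ``$e_n$-half'' of $\mathscr{P}(A-J)$ holds for every admissible $\alpha$, it suffices to produce a \emph{single} pair $(v,\beta)$ satisfying the displayed non-containment; as this is an open condition on the irreducible parameter space of such pairs (a bundle of $\beta$'s over a dense open subset of $\ker(A)\cap S$), I would establish it by a transversality/dominance argument along the lines of Lemma~\ref{lm:Choice}, crucially using $\dim\ker(A^T)\ge2$ to leave room for moving $\beta$ out of $\KK e_1+\KK\beta+(\ker(A^T)\cap v^{\perp})$. Finally, the two degenerate cases excluded above --- $\ker(A)\subseteq\im(A)$ and $\ker(A)\subseteq e_1^{\perp}$, in which $v$ cannot be taken inside $\ker(A)\cap S\setminus\im(A)$ --- I would handle by reverting to the general construction of Lemma~\ref{lm:HalfP1}, taking $v\in S\setminus\im(A)$ with $v,Av,Ae_n$ linearly independent and $v\notin Ae_n+\ker(A)+\KK e_n$, and then imposing the analogous non-containment to fix the $e_1$-side; checking that all of these open conditions can be met simultaneously (again using $\rk(A)\le n-2$ for the extra slack) is the technical core in that case.
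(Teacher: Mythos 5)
Your construction is a genuine dual of the paper's. The paper's proof of Lemma~\ref{lm:HalfP2} takes $J=A(e_n/k)\alpha^T$ with $\alpha\notin\im(A^T)$: since $\im(J)\subseteq\im(A)$ one has $\im(A-J)=\im(A)$, so $\ker((A-J)^T)=\ker(A^T)$ and the $e_1$-side condition reduces to a condition on $\alpha$ in a \emph{fixed} space; the $e_n$-side then needs a short extra argument. You instead take $v\in\ker(A)\setminus\im(A)$ and $\alpha\in\im(A^T)$, so $\ker(A-J)=\ker(A)$ and the $e_n$-side is automatic, while the $e_1$-side must cope with a \emph{varying} $\ker((A-J)^T)=\KK\beta\oplus(\ker(A^T)\cap v^{\perp})$ that depends on both $v$ and $\beta$. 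Your computations up to this point are correct (including the identification of $\ker((A-J)^T)$ and the observation that $v\in\KK Ae_n$ is automatically excluded by the hypothesis on $Ae_n$).

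The gaps are real, however. First, the key step — producing a pair $(v,\beta)$ with
$A^T(e_1-\beta)\notin\KK e_1+\KK\beta+(\ker(A^T)\cap v^{\perp})$ — is not established but only asserted via a "transversality/dominance argument." Unlike in the paper, where the bad set is the union of two \emph{fixed} affine subspaces inside a fixed affine slice and one short contradiction argument suffices, your containment is not a linear condition on $\beta$ (it involves $\KK\beta$ and $\ker(A^T)\cap v^{\perp}$, both varying), so the argument needs to be carried out and is not a routine Lemma~\ref{lm:Choice} application. Second, the degenerate cases $\ker(A)\subseteq\im(A)$ or $\ker(A)\subseteq e_1^{\perp}$ (equivalently $e_1\in\im(A^T)$) are not pathological under the hypotheses $3\le k\le n-1$, $2\le\rk(A)\le k-1$: e.g.\ any $A$ with $\ker(A)$ spanned by standard basis vectors other than $e_1$ falls into the second case. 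Your fallback of "reverting to the construction of Lemma~\ref{lm:HalfP1}" abandons $v\in\ker(A)$, so the $e_n$-side is no longer free and you are back to having to arrange both halves of $\mathscr{P}$ simultaneously — exactly the problem the paper's choice $v=Ae_n/k$ was designed to dispose of uniformly without case distinctions. As it stands, the proposal is a plausible plan but not a proof.
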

\begin{proof}
Since $Ae_n$ is not an eigenvector of $A$, we know that $Ae_n\not\in\KK e_n+\ker(A)$ by Lemma~\ref{lm:equivP}. We show that we may take $J=A(e_n/k)\alpha^T$ for some appropriate $\alpha\in\KK^n$. For such a matrix $J$ to be an element of $X_1$,
it is necessary and sufficient that $\la e_n,\alpha\ra=1$ and $\la Ae_n,\alpha\ra=0$. As the vectors $e_n$ and $Ae_n$ are linearly independent, this system of equations has a solution~$\alpha_0$. Let $\Phi\subseteq\KK^n$ be the set of solutions of the homogeneous equations $\la e_n,\alpha'\ra=0$ and $\la Ae_n,\alpha'\ra=0$.\bigskip

We have $\rk(A^T) \leq n-2$. So the affine space $\alpha_0+\Phi$ of codimension $2$ is not contained in $\im(A^T)$. Assume that $\alpha\in(\alpha_0+\Phi)\setminus\im(A^T)$. Then we have $\rk(A-J)=\rk(A)$. More specifically, we know that $\ker(A)\cap\ker(\alpha^T)$ is a proper subspace of $\ker(A)$ and that
$$
\ker(A-J)=(\ker(A)\cap\ker(\alpha^T))+\KK x
$$ 
for some $x\in\KK^n$ such that $Ax=Ae_n$. So we also have 
$$
(A-J)e_n=\frac{k-1}{k}Ae_n\not\in\ker(A)+\KK e_n=\ker(A-J)+\KK e_n
$$
and this shows that $(A-J)e_n$ is not an eigenvector of $A-J$. To make sure that $\mathscr{P}(A-J)$ holds, we need to choose $\alpha$ such that
$$
(A-J)^Te_1=A^Te_1-\alpha\not\in\ker((A-J)^T)+\KK e_1 =\ker(A^T)+\KK e_1
$$
also holds. Note here that $\ker((A-J)^T)=\ker(A^T)$ since $\im(A-J)=\im(A)$.\bigskip

So suppose on the contrary that $\alpha_0+ \Phi \subseteq A^Te_1+\ker(A^T)+\KK e_1$. Then $\Phi$ is contained in $\ker(A^T)+\KK e_1$. So any element $\alpha'\in\Phi$ may be written as $\beta+\lambda e_1$ with $\beta\in\ker(A^T)$ and $\lambda\in\KK$. But then we find that
$$
0=\la Ae_n,\alpha' \ra=\la e_n,A^T\beta \ra+\lambda\la Ae_n,e_1\ra=k\lambda
$$
and hence $\Phi$ must in fact be contained in $\ker(A^T)$. As $\Phi$ has codimension $2$ in $\KK^n$ and $\rk(A)\geq 2$, we see that $\Phi$ and $\ker(A^T)$ must be equal. So $\alpha_0-A^Te_1\in\Phi+\KK e_1$. However this is not possible, because we have $\la e_n,\alpha_0-A^Te_1\ra=1-k$ and $\la e_n,\Phi+\KK e_1\ra=0$. So $\alpha_0+ \Phi$ is not contained in $A^Te_1+\ker(A^T)+\KK e_1$. We conclude that the set
$$
(\alpha_0+\Phi)\setminus\left(\im(A^T)\cup(A^Te_1+\ker(A^T)+\KK e_1)\right)
$$
is non-empty. For any element $\alpha$ in this set, the matrix $J=A(e_n/k)\alpha^T$ has the required properties.
\end{proof}

\begin{lemma} \label{lm:nmin1}
Assume that $n \geq 3$ and let $A \in nH$ be a matrix of rank $n-1$. Then there exists a matrix $J \in X_1$ with $\rk(A-J)=n-1$ such that $\mathscr{P}(A-J)$ holds.
\end{lemma}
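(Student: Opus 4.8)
The plan is to treat Lemma~\ref{lm:nmin1} as the boundary case $k=n$ of Lemmas~\ref{lm:HalfP1} and~\ref{lm:HalfP2}, which cannot be quoted directly: Lemma~\ref{lm:HalfP2} needs $\rk A\le k-1=n-1$ with room to spare, whereas here $\rk A=n-1$ exactly, so the kernels $\ker A=\KK w$ and $\ker(A^T)=\KK\eta$ are one-dimensional and there is no slack left in the dimension counts of those proofs. Throughout one uses that $A\in nH$ gives $\langle Ae_n,e_1\rangle=n\neq0$, hence $Ae_n\notin\KK e_n$, $A^Te_1\notin\KK e_1$, and $\langle e_n,A^Te_1\rangle=n\neq0$.

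First I would look for $J=v\alpha^T\in X_1$ with $v\in\im A$, so that $\rk(A-J)\le\rk A$ by Lemma~\ref{lm:Step1}(2), parametrising $v$ by the affine space $\{v\in\im A\mid\langle v,e_1\rangle=1\}$ and, for each such $v$, parametrising $\alpha$ by $\alpha_v+\{e_n,v\}^{\perp}$, where $\alpha_v$ solves $\langle e_n,\alpha\rangle=1$ and $\langle v,\alpha\rangle=0$. For $\alpha$ outside the hyperplane $\im(A^T)=\ker(A)^{\perp}$ the rank does not drop by Lemma~\ref{lm:Step1}(3), so $\rk(A-J)=n-1$, $\ker(A-J)=\KK x_\alpha$ with $x_\alpha$ in the fibre $A^{-1}v$, $\im(A-J)=\im A$, and hence $\ker((A-J)^T)=\KK\eta$. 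By Lemma~\ref{lm:equivP}, together with $\langle v,e_1\rangle=1$ and $\langle e_n,\alpha\rangle=1$, the two halves of $\mathscr{P}(A-J)$ become $Ae_n-v\notin\KK e_n+\KK x_\alpha$ and $A^Te_1-\alpha\notin\KK e_1+\KK\eta$.

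I would secure the second condition by rerunning the final argument of the proof of Lemma~\ref{lm:HalfP2}: the admissible $\alpha$-space is not contained in $A^Te_1+\ker(A^T)+\KK e_1$, since containment would force $\{e_n,v\}^{\perp}\subseteq\ker(A^T)+\KK e_1$ and then, using $v\in\im A=\ker(A^T)^{\perp}$ and $\langle v,e_1\rangle=1$, even $\{e_n,v\}^{\perp}\subseteq\ker(A^T)$; for $n\ge4$ this is a dimension contradiction, and for $n=3$ it forces $\{e_n,v\}^{\perp}=\ker(A^T)$ and then the contradiction $\langle e_n,\alpha_v\rangle=1$ versus $\langle e_n,A^Te_1\rangle=n$. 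For the first condition I would choose $v$ inside $\{v\in\im A\mid\langle v,e_1\rangle=1\}$ but outside the proper subvariety on which $(A-J)e_n=Ae_n-v$ would lie in $\span(e_n,w,x_\alpha)$, and hence be an eigenvector of $A-J$: when $Ae_n\notin\span(e_n,w)$ — that is, when $Ae_n$ is not an eigenvector of $A$ — the concrete choice $v=Ae_n/n$ already works, making $\KK e_n+\KK x_\alpha=\span(e_n,w)$ independently of $\alpha$ exactly as in Lemma~\ref{lm:HalfP2}; when $Ae_n\in\span(e_n,w)$ one must genuinely move $v$, exploiting the extra structure $A^2e_n\in\span(e_n,w)$ available in that case to see the bad subvariety of $v$'s is proper. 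Intersecting the nonempty open sets of admissible $\alpha$ cut out by the two conditions then produces a single $J$ with all the required properties.

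The hard part, exactly as in Lemma~\ref{lm:HalfP2}, is controlling the degenerate configurations when there is no dimensional room: one must verify by hand that each of the three ``bad'' loci — rank drop, $(A-J)e_n$ an eigenvector of $A-J$, $(A-J)^Te_1$ an eigenvector of $(A-J)^T$ — is a proper subset of the relevant parameter space, replacing dimension estimates by the normalisation identities $\langle Ae_n,e_1\rangle=\langle e_n,A^Te_1\rangle=n\neq0$ in the small cases ($n=3$ in particular), and one cannot avoid separating the case in which $Ae_n$ is an eigenvector of $A$, where $v$ may not be taken proportional to $Ae_n$.
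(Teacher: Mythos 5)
Your overall strategy — parametrize $J=v\alpha^T\in X_1$ by $v\in\{v\in\im A\mid\la v,e_1\ra=1\}$ and $\alpha\in\alpha_v+\{e_n,v\}^\perp$, impose open conditions, and intersect — is structurally the same as the paper's, and your analysis of the second half of $\mathscr P(A-J)$ (including the $n=3$ refinement using $\la e_n,\alpha_v\ra=1$ against $\la e_n,A^Te_1\ra=n$) is correct and complete. The non-eigenvector case, where you take $v=Ae_n/n$ so that $\KK e_n+\KK x_\alpha\subseteq\span(e_n,w)$ and the first half of $\mathscr P$ holds automatically, is also fine.

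The genuine gap is the case where $Ae_n$ is an eigenvector of $A$. There you only assert that "the bad subvariety of $v$'s is proper," citing $A^2e_n\in\span(e_n,w)$, but you never prove it — and the surrogate condition you propose, avoiding $Ae_n-v\in\span(e_n,w,x_\alpha)$, cannot do the job: for $n=3$ the space $\span(e_n,w,x_\alpha)$ is typically all of $\KK^3$ (e.g.\ take $A$ with $Ae_3=3e_1$, $\ker(A)=\KK e_1$, $\ker(A^T)=\KK e_2$; then for every admissible $v$ the three vectors $e_3,w,x_\alpha$ already span $\KK^3$), so that criterion is vacuous and does not exhibit a proper subvariety. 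The actual condition you need is the weaker $Ae_n-v\notin\KK e_n+\KK x_\alpha$, which depends on $\alpha$ through $x_\alpha$ and must be shown compatible with the two $\alpha$-conditions you already have — and that compatibility is precisely where the work lies. In the paper this is the bulk of the proof: the authors parametrize by $(x,\alpha)$, show that the projection $\pi_1$ onto $x$-space has surjective differential at $(x,\alpha)$ exactly when $x,Ax,e_n$ are linearly independent, and rule out the degenerate alternative (that $\pi_1$ maps all of $Q_2$ into a line $\ell\subseteq e_n/n+S$) by computing that $\pi_1$ still has nonzero differential and would then have dense image in $\ell$, contradicting the fact that $e_n$ is not an eigenvector of $A$. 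This differential/contradiction argument is exactly what handles your hard case, and it is not something one can substitute with a dimension count: you would need to reproduce an argument of this kind to close the gap.
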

\begin{proof}
The proof resembles that of Theorem \ref{thm:oseckX1}, except that we have slightly more freedom in choosing the matrix $J$ since we do not need to make sure that the rank of $A$ decreases when subtracting $J$. Define
\begin{eqnarray*}
S&:=&\{x'\in\KK^n \mid \la Ax',e_1\ra=0\}\\
\Sigma&:=&\{\alpha\in\KK^n\mid\la e_n,\alpha\ra=1\}\\
Q&:=&\{(x,\alpha)\in(e_n/n+S)\times\Sigma\mid\la Ax,\alpha\ra=0,\la x,\alpha\ra=1\}
\end{eqnarray*}
If $(x,\alpha)$ is a point in $Q$, then $J=Ax\cdot\alpha^T$ lies in $X_1$ and $\im(J)\subseteq\im(A)$. So we have $\rk(A-J)\leq\rk(A)$. Note that any matrix $J$ with these properties is of this form by Lemma~\ref{lm:Step1}. Consider the set
$$
U_2:=\{\alpha\in\Sigma\mid A^T\alpha, A^Te_1\mbox{ linearly independent},\alpha\not\in\im(A^T), A^Te_1-\alpha\not\in\ker(A^T)+\KK e_1\}.
$$
The intersections $\{\alpha\in\KK^n\mid A^T\alpha\in\KK A^Te_1\}$, $\im(A^T)$ and $A^Te_1+\ker(A^T)+\KK e_1$ with $\Sigma$  are all affine subspaces of codimension $\geq 1$ in $\Sigma$. So the set $U_2$ is open and dense in~$\Sigma$.\bigskip

We claim that $U_2$ is contained in the projection $\pi_2(Q)$ of $Q$ on $\Sigma$. We also claim for any element $(x,\alpha)\in Q_2:=\pi_2^{-1}(U_2)$ and for $J=Ax\cdot\alpha^T$ that we have $\rk(A-J)=n-1$ and that $(A-J)^Te_1$ is not an eigenvector of $(A-J)^T$. This means that at least half of property $\mathscr{P}(A-J)$ holds. Indeed, if $\alpha$ is an element of $U_2$, then $\alpha, A^T\alpha, A^Te_1$ are linearly independent and therfore the system of equations 
\begin{eqnarray*}
\la x,\alpha \ra&=&1\\
\la x, A^T \alpha \ra&=&0\\
\la x, A^Te_1\ra&=&1
\end{eqnarray*}
has a solution $x\in\KK^n$. For any such $x$, the point $(x,\alpha)$ lies in $Q$. For the second claim, note that $\rk(A-J)$ cannot be lower than $\rk(A)$ since $\alpha\not\in\im(A^T)$. And from
$$
(A-J)^Te_1=A^Te_1-\alpha\not\in\ker(A^T)+\KK e_1=\ker((A-J)^T)+\KK e_1
$$
we see that the vector $(A-J)^Te_1$ is not an eigenvector of $(A-J)^T$. Here we again use that $\ker(A^T)=\ker((A-J)^T)$ since $\im(A)=\im(A-J)$.\bigskip

Since $(A-J)x=Ax-Ax\cdot \la x,\alpha\ra=0$ and $\rk(A-J)=n-1$, we have $\ker(A-J)=\KK x$. So we want to find a point $(x,\alpha) \in Q_2$ such that 
$$
A(e_n-x)=(A-J)e_n\not\in\ker(A-J)+\KK e_n=\span(x,e_n).
$$ 
The set of $x \in (e_n/n+S) \setminus \{e_n/n\}$ satisfying $A(e_n-x)\in\span(x, e_n)$ is a closed subset of $(e_n/n+S) \setminus \{e_n/n\}$. Suppose that this subset is all of $(e_n/n+S)\setminus\{e_n/n\}$. Then we see that
$$
\frac{n-1}{n}\cdot Ae_n-tAx'\in\span(x',e_n)
$$
for all $t\in\KK\setminus\{0\}$ and $x'\in S\setminus\{0\}$. From this follows that $Ae_n$ is contained in $\span(x',e_n)$ for all $x'\in S\setminus\{0\}$. Since $\dim(S)\geq2$, this can only happen if $Ae_n\in\KK e_n$, but this contradicts the fact that $\la Ae_n, e_1\ra=n\neq0$. Hence we have $A(e_n-x)\not\in\span(x,e_n)$ for all~$x$ in an open dense subset $U_1$ of $e_n/n+S$. This means that if the differential $d_{(x,\alpha)} \pi_1$ is surjective (onto $S$) in some point $(x,\alpha)\in Q_2$, then the dense image $\pi_1(Q_2)$ intersects the set $U_1$ and we are done. We claim that this is the case if $x,Ax,e_n$ are linearly independent. Indeed, $(x',\alpha') \in \KK^n \times \KK^n$ lies in $T_{(x,\alpha)} Q_2$ if and only if the following equations hold:
\begin{eqnarray*}
\la x',A^Te_1\ra&=&0\\
\la e_n,\alpha'\ra&=&0\\
\la x',A^T\alpha\ra&=&-\la Ax, \alpha' \ra\\
\la x',\alpha \ra&=&-\la x, \alpha' \ra
\end{eqnarray*}
Now if $x,Ax,e_n$ are linearly independent, then for any $x'$ satisfying the first of these equations, there exists an $\alpha'$ satisfying the other three.\bigskip

Note for $(x, \alpha) \in Q$ that the triple $x,Ax,e_n$ is linearly dependent if and only if we have $Ax\in\KK(e_n-x)$, because $e_n-x\neq0$ and $x\not\in\ker(\alpha^T)\ni Ax, e_n-x$. Hence we are left with
the case where $\pi_1$ maps $Q_2$ into the set
$$
\{x \in e_n/n+S \mid A(e_n-x) \in \span(x,e_n), Ax \in\KK (e_n-x) \}.
$$
We show that this is impossible: it follows from these equations that $Ae_n\in\span(x,e_n)$. So $\pi_1$ maps $Q_2$ into the line $\ell=e_n/n+\KK y'$ where $y'$ is the (non-zero) projection of $Ae_n$ on $S$ along $\KK e_n$. We have $Ax\not\in\KK(e_n-x)$ for $x$ in an open dense subset of $\ell$ since $e_n$ is not an eigenvector of $A$. And this subset must be disjoint from $\pi(Q_2)$ by assumption. However, one readily checks from the equations for the
tangent space above that $\pi_1$ has non-zero differential at any point, which shows that $\pi_1(Q_2)$
contains an open dense subset of $\ell$. This is a contradiction.
\end{proof}

\begin{proof}[Proof of Theorem~\ref{thm:mainsln}]
Let $A\in nH$ be a matrix. If $\rk(A)=n$, then $A\in\osec_nX_1$ by Lemma~\ref{lm:equivP} and Theorem~\ref{thm:oseckX1}. If $\rk(A)=n-1$, then $A\in\osec_nX_1$ by Lemma \ref{lm:nmin1}. If $\rk(A)=n-\ell\leq n-2$, then
there is a matrix $J_1\in X_1$ by Lemma \ref{lm:HalfP1} and there are matrices $J_2,\ldots,J_{\ell}\in
X_1$ by Lemma \ref{lm:HalfP2} such that $\rk(A-J_1-\dots-J_{\ell})=\rk(A)=n-\ell$ and $\mathscr{P}(A-J_1-\dots-J_{\ell})$ holds. So Theorem \ref{thm:oseckX1} shows that $A-J_1-\ldots-J_{\ell} \in \osec_{n-\ell}X_1$ and hence again $A \in \osec_nX_1$.
\end{proof}

\bibliographystyle{alpha}
\bibliography{monrank}

\end{document}